\documentclass{article}
\usepackage[a4paper, left=3cm, right=3cm, top=2cm]{geometry}
\usepackage{amsmath, amssymb}
\usepackage{hyperref,cleveref}
\usepackage{amsthm}
\usepackage{bm}
\usepackage{xcolor}
\usepackage{algorithm}
\usepackage{algpseudocode}
\usepackage{graphicx}
\usepackage{tikz}
\usetikzlibrary{arrows.meta,positioning}

\definecolor{workflowblue}{RGB}{48,108,190}
\definecolor{workflowgreen}{RGB}{64,140,77}
\definecolor{workfloworange}{RGB}{218,126,18}
\definecolor{workflowpurple}{RGB}{105,56,170}

\definecolor{workflowbluebg}{RGB}{244,248,254}
\definecolor{workflowgreenbg}{RGB}{245,250,244}
\definecolor{workfloworangebg}{RGB}{255,249,240}
\definecolor{workflowpurplebg}{RGB}{249,246,253}

\tikzset{
    workflowstage/.style={
        rounded corners=3mm,
        line width=0.9pt,
        minimum width=12.0cm,
        minimum height=2.2cm,
        inner sep=4mm
    },
    stagecircle/.style={
        circle,
        minimum size=7mm,
        inner sep=0pt,
        text=white,
        font=\bfseries
    },
    flow/.style={
        -{Latex[length=3mm,width=2mm]},
        thick,
        draw=black!50
    },
    outputnode/.style={
        rounded corners=1.5mm,
        line width=0.7pt,
        minimum width=4.6cm,
        minimum height=8mm,
        align=center,
        font=\bfseries
    },
    operatorbox/.style={
        rounded corners=2.5mm,
        line width=0.7pt,
        draw=black!35,
        fill=black!3,
        minimum width=4.2cm,
        minimum height=2.3cm,
        align=center,
        inner sep=4mm
    }
}
\usepackage{subcaption}

\newcommand{\R}{{\mathbb{R}}}

\newtheorem{theorem}{Theorem}
\newtheorem{proposition}{Proposition}
\newtheorem{lemma}{Lemma}
\newtheorem{corollary}{Corollary}
\newtheorem{definition}{Definition}

\newtheorem{remark}{Remark}
\usepackage[numbers,square]{natbib}
\usepackage{url}

\title{Randomized Symplectic Model Reduction: \\One-Pass Basis Construction and Sketched Hamiltonian Assembly}
\title{Randomized Symplectic Model Reduction: \\
One-Pass Basis Construction and Sketched Hamiltonian Assembly}

\author{%
R. Herkert\textsuperscript{1}%
\thanks{\texttt{robin.herkert@ians.uni-stuttgart.de}}
\quad
B. Haasdonk\textsuperscript{1}%
\thanks{\texttt{haasdonk@ians.uni-stuttgart.de}}
\quad
J. Fehr\textsuperscript{2}%
\thanks{\texttt{joerg.fehr@itm.uni-stuttgart.de}}
\\[1em]
\parbox{0.9\textwidth}{\centering\small
\textsuperscript{1}Institute of Applied Analysis and Numerical Simulation,\\
University of Stuttgart, Pfaffenwaldring 57,
70569 Stuttgart, Germany
\\[0.5em]
\textsuperscript{2}Institute of Engineering and Computational Mechanics,\\
University of Stuttgart, Pfaffenwaldring 9,
70569 Stuttgart, Germany
}
}

\date{}

\begin{document}
\maketitle

\begin{abstract}
Symplectic model order reduction of Hamiltonian systems yields reduced dynamics that conserve the reduced Hamiltonian along exact trajectories, avoiding the artificial energy drift that can arise from conventional projection methods. For large-scale spatial discretizations, however, basis construction and reduced-operator assembly can involve substantial offline costs. Randomized numerical linear algebra can reduce these costs through low-dimensional sketches while supporting streaming and parallel implementations. We develop randomized techniques for two parts of the offline stage while preserving the Hamiltonian structure of the reduced dynamics. First, we introduce
the one-pass sketched randomized cSVD (Ops-rcSVD), a one-pass randomized variant of the complex-SVD algorithm. Combining randomized range finding with an independent
left sketch, Ops-rcSVD
constructs an ortho-symplectic reduced basis from a single pass over the
snapshot ensemble. Second, for quadratic Hamiltonian systems, we introduce a
randomized reduced-Hamiltonian assembly based on sketching the reduced basis
and operator actions, yielding a reduced Hamiltonian system by construction. We derive bounds for the snapshot
projection error, uniform-in-parameter perturbation bounds for the sketched
reduced Hamiltonian, and an energy-norm estimate for the difference between
reduced and sketched reduced trajectories.
Experiments on large-scale systems show that Ops-rcSVD attains projection accuracy comparable to deterministic and two-pass randomized constructions while reducing basis-construction time. They further demonstrate accurate sketched reduced-Hamiltonian assembly when storing the complete snapshot ensemble is no longer feasible.
\end{abstract}
\section{Introduction}
Hamiltonian systems are widely used to model conservative dynamics, arising, for example, in mechanics and spatial discretizations of partial differential equations \cite{Meyer2017}. For large-scale discretizations, numerical simulations can become computationally expensive, particularly when repeated evaluations are required, as in parameter studies, optimization, or other multi-query settings. Moreover, applications such as real-time simulation and control often require rapid evaluations. Projection-based model order reduction (MOR) addresses these challenges by approximating the full-order model (FOM) dynamics in a low-dimensional subspace to reduce computational cost; see \cite{Benner2015} for a comprehensive overview. Classical projection-based reduction methods, however, do not in general preserve the underlying Hamiltonian structure. Consequently, invariants of the full-order dynamics, in particular energy conservation, may be lost in the ROM, leading to energy drift and qualitatively incorrect long-time behavior.

This motivates structure-preserving approaches, in particular symplectic MOR, where the reduced space is chosen so that the reduced dynamics remain Hamiltonian \cite{Peng2016,MaboudiAfkham2017,Buchfink2019}. For autonomous systems, this preserves the reduced Hamiltonian along exact reduced trajectories and, under additional assumptions, transfers qualitative stability properties of the full system to the ROM \cite{Meyer2017}.

A fundamental step in projection-based MOR is the construction of a reduced basis from representative snapshots. Such basis constructions can also be relevant for machine-learning-based MOR, for instance as a step in autoencoder-based approaches \cite{Fresca2022,Rettberg2025,brantner2023symplectic}. Snapshot-based symplectic MOR can be formulated through the proper symplectic decomposition, with practical basis constructions such as the cotangent lift and the complex SVD (cSVD), proposed in \cite{Peng2016}, as well as methods allowing non-orthonormal symplectic bases \cite{Buchfink2019,Xu2003}. Extensions include time-dependent reduced spaces and rank-adaptive strategies \cite{Pagliantini2021,HesthavenPagliantiniRipamonti2022,Schleuss23}, online-adaptive dictionary-based approximation \cite{Herkert2023dict}, and structure-preserving nonlinear-manifold methods such as quadratic manifolds and approximately symplectic autoencoders \cite{Sharma2023QM,Buchfink2023SMG}. Despite these advances, the offline phase can remain a major bottleneck: large snapshot ensembles must be generated, stored, and processed, low-rank bases computed, and reduced operators assembled.

Randomized numerical linear algebra reduces such costs by replacing expensive
deterministic matrix computations with approximate factorizations based on
randomized sketches \cite{Mahoney2010,Woodruff2014,Murray2023}. Core tools
include randomized low-rank approximation, range finding, and subspace
iteration \cite{Halko2011,Gu2015,Tropp2011Ana}, together with more recent
unified analyses of deterministic and randomized low-rank approximation
\cite{Demmel2023}. These methods can reduce computational and data storage costs while naturally supporting streaming environments and parallel implementations.

Randomization has also entered MOR through randomized SVD for reduced-basis construction \cite{Alla2019}, randomized low-rank approximation for nonlinear projection-based MOR \cite{Bach2019}, and randomized local MOR \cite{Buhr2018}. The computational limitations motivating randomized MOR also arise in realistic large-scale applications, where storing and factorizing the complete snapshot matrix can make conventional snapshot-based MOR prohibitively expensive. Examples include large-scale acoustic system identification from measured room-impulse-response data
\cite{PellingSarradj2026} and POD of computational-fluid-dynamics data sets obtained on grids with more than $10^9$ points \cite{MiroEtAl2026}.

In the Hamiltonian setting, randomized variants of the
complex-SVD construction (rcSVD) reduce the offline cost of symplectic basis
generation \cite{Herkert2023rand,herkert2025error}. However, conventional rcSVD still requires a second pass over the snapshot data to compute their coordinates in the basis obtained from the randomized sketch. If snapshots
are not retained, this involves regenerating and processing the training
trajectories, which is restrictive when snapshot generation is expensive or
the data are available only as a stream.

A systematic sketching framework for projection-based MOR was developed by
Balabanov and Nouy \cite{Balabanov2019,Balabanov2021}; see also
\cite{Balabanov2019a}. It constructs and evaluates reduced quantities through
low-dimensional sketches that approximately preserve norms and inner products
on relevant subspaces, thereby reducing offline computational and memory costs
and enabling randomized residual-based error estimation related to
\cite{Smetana19}. Directly sketched reduced systems, however, are not generally
guaranteed to retain Hamiltonian structure.

The aim of this work is therefore to combine randomized sketching with
symplectic MOR to reduce offline computational and data-access costs while
preserving the Hamiltonian structure of the reduced dynamics. We address two
major offline tasks: structure-preserving basis construction and reduced
Hamiltonian assembly. The main contributions of this work are:

\begin{enumerate}

\item
We develop the one-pass sketched randomized cSVD (Ops-rcSVD), a \emph{one-pass} randomized variant of the
complex-SVD-based symplectic basis construction, building on established
one-pass low-rank approximation and sketching techniques. Randomized range
finding is combined with an independent left sketch and sketched
least-squares recovery to produce an ortho-symplectic basis from a single
streaming pass over the snapshot ensemble. Thus, the complete snapshot matrix does not
need to be retained and the second data pass---or, in a streaming
implementation, second trajectory generation---required by conventional
rcSVD is avoided.

\item
For quadratic Hamiltonian systems, we
introduce a sketched reduced-Hamiltonian construction. Random sketches compress
the basis and Hamiltonian operator actions, while least-squares reconstruction
and explicit symmetrization yield a symmetric reduced Hessian and hence a
Hamiltonian ROM by construction. For parameter-separable Hamiltonians,
high-dimensional operations are confined to the offline stage, while online
assembly uses only precomputed reduced components.

\item
We analyze the errors introduced by both randomized stages. For Ops-rcSVD, we
bound the coefficient-recovery and snapshot projection errors, separating the
range-finding residual from the additional one-pass recovery error and relating
the approximation to the determinist cSVD projection error. For sketched
reduced-Hamiltonian assembly, subspace embedding of the basis and relevant
operator-residual spaces yields a uniform-in-parameter perturbation bound. For positive definite quadratic
Hamiltonians, we further derive an energy-norm trajectory-error bound with at
most linear growth in time. In both stages, the Hamiltonian structure is
preserved exactly.

\end{enumerate}

The methods target regimes in which the FOM dimension, number of snapshots,
trajectory-generation cost, or number of parameter-dependent operators makes
repeated processing of full-order data an offline bottleneck.

The paper is organized as follows. \Cref{Sec:SympMor} reviews symplectic MOR,
complex-SVD-based ortho-symplectic basis construction, and randomized
sketching. \Cref{sec:opsrcsvd} presents the Ops-rcSVD and its error, computational,
and memory analysis. \Cref{Sec:SketchedHam} introduces the sketched
reduced-Hamiltonian assembly and derives perturbation, and
trajectory-error guarantees under subspace-embedding conditions. Numerical experiments on a two-dimensional
Schr\"odinger problem and a lossless LC resonator network compare basis
construction methods and assess the large-scale randomized workflow when full
snapshot storage is infeasible. \Cref{Sec:Conclusion} summarizes the results
and future directions.

\section{Background on structure-preserving MOR and randomized subspace embeddings}
\label{Sec:SympMor}

We briefly review symplectic MOR for Hamiltonian systems, the cSVD construction of ortho-symplectic reduced bases,
and randomized subspace embeddings. For a more general introduction to MOR, we refer to
\cite{Benner2015,Benner2021,Benner2017}, and for symplectic MOR to
\cite{Peng2016,MaboudiAfkham2017,Buchfink2019}.

\subsection{Hamiltonian systems and symplectic MOR}
\label{Sec:HamSympMor}

Let $\mathcal P\subset\mathbb R^{n_\mu}$ and $I_t=[t_0,t_{\mathrm{end}}]$
denote a parameter domain and time interval. We consider the Hamiltonian $
\mathcal H(\cdot;\mu)\in C^1(\mathbb R^{2N};\mathbb R),$ the initial value $x_0(\mu)\in\mathbb R^{2N},$ and the parameter vector $\mu\in\mathcal P$.
For fixed $\mu$, a Hamiltonian system in canonical form is
\begin{equation}
\label{eq:FOM-Ham}
\frac{\mathrm d}{\mathrm dt}x(t;\mu)
=
J_{2N}\nabla_x\mathcal H(x(t;\mu);\mu),
\qquad
x(t_0;\mu)=x_0(\mu),
\end{equation}
where
\[
J_{2N}
:=
\begin{bmatrix}
0_N&I_N\\
-I_N&0_N
\end{bmatrix}
\]
denotes the canonical Poisson matrix. The state is commonly partitioned as
$x=[q^\top,p^\top]^\top$, with generalized position and momentum variables $q,p\in C^1(I_t;\mathbb R^N)$.

From the chain rule and the skew-symmetry of $J_{2N}$ it follows that the Hamiltonian is conserved along the exact trajectories.
Symplectic time integrators, such as the implicit midpoint rule, are commonly
used to retain the geometric structure after time discretization
\cite{Hairer2006GNI}.

Projection-based MOR approximates $
x(t;\mu)\approx Vx_r(t;\mu)$ with a basis matrix $
V\in\mathbb R^{2N\times2n}$ assuming $ n\ll N$.
For a test basis matrix $W\in\mathbb R^{2N\times2n}$, a Petrov--Galerkin projection
imposes
\[
W^\top
\left(
V\dot x_r
-
J_{2N}\nabla_x\mathcal H(Vx_r;\mu)
\right)
=
0.
\]
Without additional structure in the trial and test basis matrices $V$ and $W$, the reduced system is not guaranteed to preserve the Hamiltonian structure. Symplectic MOR therefore requires $V^\top J_{2N}V
=
J_{2n},$
and chooses the test basis through the symplectic inverse
\footnote{Throughout this work, $V^+$ denotes the symplectic inverse of
$V$, not the Moore--Penrose pseudoinverse.}
\begin{equation*}
V^+
:=
J_{2n}^{-1}V^\top J_{2N}
=
-\,J_{2n}V^\top J_{2N},
\qquad
W^\top:=V^+.
\end{equation*}
Then $V^+V=I_{2n}$ and, since $J_{2N}^2=-I_{2N}$, we have $V^+J_{2N}=J_{2n}V^\top.$
Defining the reduced Hamiltonian $\mathcal H_r(x_r;\mu)
:=
\mathcal H(Vx_r;\mu),$
the chain rule gives $\nabla_{x_r}\mathcal H_r(x_r;\mu) = V^\top\nabla_x\mathcal H(Vx_r;\mu),$ and hence the Petrov--Galerkin projection yields the reduced Hamiltonian system
\begin{equation}\label{eq:red-Ham}
\frac{\mathrm d}{\mathrm dt}x_r(t;\mu)
=
J_{2n}\nabla_{x_r}\mathcal H_r(x_r(t;\mu);\mu),
\qquad
x_r(t_0;\mu)=V^+x_0(\mu).
\end{equation}
Thus, the ROM is again a canonical Hamiltonian system.

\subsection{Symplectic basis generation via the complex SVD}
\label{Sec:cSVD}
We collect the snapshots in
\[
X_s 
:=
\begin{bmatrix}
x(t_1;\mu_1)&\cdots&x(t_{n_s};\mu_{n_s})
\end{bmatrix}
=
\begin{bmatrix}
Q_s\\P_s
\end{bmatrix}
\in\mathbb R^{2N\times n_s},
\]
where $(t_j,\mu_j)$, $j=1,\dots,n_s$, enumerates the sampled
parameter-time pairs and
$Q_s,P_s\in\mathbb R^{N\times n_s}$. The cSVD construction
\cite{Peng2016} introduces $Y_s := Q_s+\mathrm iP_s \in\mathbb C^{N\times n_s}.$ We consider a rank-$n$ truncated SVD
\[
Y_s\approx U\Sigma Z^*,
\qquad
U\in\mathbb C^{N\times n},
\qquad
U^*U=I_n.
\]
Writing $U=\Phi+\mathrm i\Psi$ and $\Phi,\Psi\in\mathbb R^{N\times n}$, 
orthonormality implies $\Phi^\top\Phi+\Psi^\top\Psi=I_n,$ and $\Phi^\top\Psi=\Psi^\top\Phi.$ The map
\begin{equation}
\label{eq:map}
\mathcal M(U)
:=
\begin{bmatrix}
\Phi&-\Psi\\
\Psi&\Phi
\end{bmatrix}
\in\mathbb R^{2N\times2n}
\end{equation}
therefore yields the cSVD basis matrix $V:=\mathcal M(U),
$ satisfying $V^\top V=I_{2n}$ and $
V^\top J_{2N}V=J_{2n}.$
The latter two conditions are called ortho-symplecticity. Hence, $V^+=V^\top$. Furthermore, an ortho-symplectic basis matrix admits the
block representation \eqref{eq:map}.

\subsection{Randomized subspace embeddings and sketched scalar products}
\label{Sec:RSE}

We recall the sketching concepts of
\cite{Balabanov2019,Balabanov2021}; see also \cite{Balabanov2019a}.
Throughout, $\mathbb K=\mathbb R$ or $\mathbb C$ and $\langle u,v\rangle=u^*v,$ $
\|u\|=\|u\|_2.$
A sketching map $\Theta:\mathbb K^m\rightarrow\mathbb K^k,$ where $k\ll m$, induces the sketched scalar product and sketched norm
\[
\langle u,v\rangle^\Theta
:=
\langle\Theta u,\Theta v\rangle,
\qquad
\|u\|^\Theta:=\|\Theta u\|_2.
\]
Weighted inner products can be treated similarly: if $M=L^*L$, then $\langle u,v\rangle_M
=
\langle Lu,Lv\rangle,$
so a Euclidean sketch can be applied after transformation by $L$; see
\cite{Woodruff2014}.

The main property used below is approximate norm preservation on a fixed
low-dimensional subspace.

\begin{definition}[$\varepsilon$-subspace embedding]
\label{def:subspace-embedding}
Let $Y\subset\mathbb K^m$ be a linear subspace. A linear map
$\Theta:\mathbb K^m\rightarrow\mathbb K^k$ is an
\emph{$\varepsilon$-subspace embedding for $Y$} if
\begin{equation}
\label{eq:subspace-embedding-norm}
(1-\varepsilon)\|y\|_2^2
\le
\|\Theta y\|_2^2
\le
(1+\varepsilon)\|y\|_2^2
\qquad
\forall\,y\in Y,
\end{equation}
for some $\varepsilon\in[0,1)$.
\end{definition}

The norm-preserving subspace-embedding property also implies approximate
preservation of inner products; see, e.g.,
\cite[Sec.~2.2]{PalittaSchweitzerSimoncini2025}.

\begin{lemma}[Approximate inner products from a subspace embedding]
\label{lem:ip_from_embedding}
Let $\Theta$ be an $\varepsilon$-subspace embedding for
$Y\subset\mathbb K^m$. Then
\begin{equation*}
\left|
\langle y_1,y_2\rangle
-
\langle\Theta y_1,\Theta y_2\rangle
\right|
\le
\varepsilon
\|y_1\|_2\|y_2\|_2
\qquad
\forall\,y_1,y_2\in Y.
\end{equation*}
\end{lemma}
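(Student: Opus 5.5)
The plan is a polarization argument applied to normalized vectors. By homogeneity, both sides of the claimed inequality scale by $\|y_1\|_2\|y_2\|_2$, so I will assume $\|y_1\|_2=\|y_2\|_2=1$; the case where either vector is zero is trivial. I will also write $D(u,v):=\langle u,v\rangle-\langle\Theta u,\Theta v\rangle$. This is a sesquilinear (respectively bilinear) form on $Y$, and $D(u,u)=\|u\|_2^2-\|\Theta u\|_2^2$. Since $Y$ is a linear subspace, $y_1\pm y_2\in Y$ and $y_1\pm \mathrm i y_2\in Y$ in the complex case, so \eqref{eq:subspace-embedding-norm} gives $|D(u,u)|\le\varepsilon\|u\|_2^2$ for all these vectors.

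\textbf{Real case.} The polarization identity gives $D(y_1,y_2)=\tfrac14\bigl(D(y_1+y_2,y_1+y_2)-D(y_1-y_2,y_1-y_2)\bigr)$. Hence
\[
|D(y_1,y_2)|\le\tfrac{\varepsilon}{4}\bigl(\|y_1+y_2\|_2^2+\|y_1-y_2\|_2^2\bigr)=\tfrac{\varepsilon}{2}\bigl(\|y_1\|_2^2+\|y_2\|_2^2\bigr)=\varepsilon
\]
by the parallelogram law. Undoing the normalization (for example, rescaling $y_1\mapsto y_1/\|y_1\|_2$) gives the claim. Optimizing over $y_1\mapsto t y_1$, $y_2\mapsto y_2/t$ is an equivalent route.

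\textbf{Complex case.} This is the only real obstacle, since the naive four-term complex polarization would give $2\varepsilon$ instead of $\varepsilon$. To avoid this, I will rotate first. Choose $\theta$ with $e^{\mathrm i\theta}D(y_1,y_2)=|D(y_1,y_2)|$ and set $\tilde y_1:=e^{-\mathrm i\theta}y_1\in Y$; this is possible because $D$ is conjugate-linear in its first argument. Then $D(\tilde y_1,y_2)$ is real and nonnegative. Because $D$ is Hermitian, $\mathrm{Re}\,D(u,v)=\tfrac14\bigl(D(u+v,u+v)-D(u-v,u-v)\bigr)$. Applying the real-case estimate to this real part, with $\|\tilde y_1\|_2=\|y_1\|_2$, gives
\[
|D(y_1,y_2)|=\mathrm{Re}\,D(\tilde y_1,y_2)\le\varepsilon .
\]
Rescaling as before finishes the proof.
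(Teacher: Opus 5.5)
The paper does not prove this lemma; it only refers to \cite[Sec.~2.2]{PalittaSchweitzerSimoncini2025}. Your argument is therefore a self-contained proof, and it is correct.

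In the real case it is the standard parallelogram-law estimate. In the complex case, the phase rotation $\tilde y_1=e^{-\mathrm i\theta}y_1$ is exactly what keeps the sharp constant $\varepsilon$; naive four-term polarization would only give $2\varepsilon$.

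One small correction of attribution: $\tilde y_1\in Y$ holds because $Y$ is $\mathbb C$-linear. The conjugate-linearity of $D$ is what gives $D(\tilde y_1,y_2)=e^{\mathrm i\theta}D(y_1,y_2)$. The $\mathbb C$-linearity is genuinely needed. For example, $\Theta=[1,\,-\mathrm i]\in\mathbb C^{1\times 2}$ is an exact isometry on the real subspace $\mathbb R^2\subset\mathbb C^2$, yet $\langle\Theta e_1,\Theta e_2\rangle=-\mathrm i\neq 0=\langle e_1,e_2\rangle$.

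A different route is spectral, and it matches the device the paper itself uses for $Q^*\Theta^*\Theta Q$ and $G$ in Proposition~\ref{prop:coeff_error_B_BLS} and Lemma~\ref{lem:delta_uniform_mu}. Take an orthonormal basis $U$ of $Y$ and set $K:=U^*(I-\Theta^*\Theta)U$. The embedding condition says this Hermitian matrix has spectrum in $[-\varepsilon,\varepsilon]$, hence $\|K\|_2\le\varepsilon$. For $y_1=Ua$, $y_2=Ub$ one has $\langle y_1,y_2\rangle-\langle\Theta y_1,\Theta y_2\rangle=a^*Kb$, which gives the bound immediately.

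The two routes trade off as follows. The spectral version handles $\mathbb K=\mathbb R$ and $\mathbb C$ in one stroke and yields the operator-norm statement directly. Yours avoids bases and the spectral theorem and uses only the quadratic-form bound, at the price of the rotation trick.

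Finally, the paper applies the lemma only with real $\Theta_H$, $V$, and $R_H(\mu)$ in Lemma~\ref{lem:delta_uniform_mu}. Your real-case paragraph alone therefore already covers its actual use.
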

Typical sketches include Gaussian matrices \cite{Tropp2011Ana}, structured randomized transforms
such as the subsampled randomized Hadamard transform (SRHT) and subsampled
randomized Fourier transform (SRFT) \cite{Tropp2011Ana}, and sparse embeddings such as
CountSketch \cite{Woodruff2014}.
The SRHT and SRFT take the respective forms
\[
\Theta
=
\sqrt{\frac{m}{k}}\,RHD,
\qquad
\Theta
=
\sqrt{\frac{m}{k}}\,RFD,
\]
where $R$ uniformly samples $k$ rows, $H$ is the normalized Walsh--Hadamard transform matrix, and $F$ is the unitary matrix of the discrete Fourier transform. For the SRHT, \(D\) contains independent random signs, whereas for the SRFT it contains independent random phases uniformly distributed on the complex unit circle. Both transforms admit efficient matrix-free implementations
with a computational cost of $\mathcal O(m\log m)$
operations per vector.

CountSketch assigns each input coordinate to one randomly selected output
coordinate together with a random sign. For \(i=1,\ldots,m\), we draw
\(h(i)\in\{1,\ldots,k\}\) and \(\sigma_i\in\{-1,+1\}\), with equal
probability for both signs, and set \(\Theta_{h(i),i}=\sigma_i\), while all
other entries in column \(i\) are zero.

Hence, for \(A\in\mathbb K^{m\times r}\), the sketch can be accumulated
row-wise as
\((\Theta A)_{h(i),:}\leftarrow(\Theta A)_{h(i),:}+\sigma_iA_{i,:}\).
Its application therefore requires only one signed accumulation per input
entry and costs \(\mathcal O(\operatorname{nnz}(A))\), where $\operatorname{nnz}(A)$ denotes the number of non-zero entries of $A$. Neither \(\Theta\)
nor a dense product with it needs to be formed, since the indices \(h(i)\)
and signs \(\sigma_i\) can be stored or regenerated from their random
parameters.

This structure is particularly useful when assembling reduced operators of the
form \(V^\top H V\), where \(V\in\mathbb R^{2N\times 2n}\) is a reduced basis matrix
and \(H\in\mathbb R^{2N\times 2N}\) is sparse. In this case, each row of
\(HV\) can be sketched immediately after it is computed, so that
\(\Theta_HHV\) is accumulated without forming the tall matrix \(HV\).
Likewise, \(\Theta_HV\) can be formed in \(\mathcal O(Nn)\) operations, up to
constant factors, independently of \(k_H\), whereas a dense embedding requires
\(\mathcal O(k_HNn)\) operations. Thus, CountSketch can be fused with the
sparse operator application while avoiding explicit storage of both
\(\Theta_H\) and the intermediate matrix \(HV\)
\cite{Woodruff2014}.

The framework of \cite{Balabanov2019,Balabanov2021} represents
high-dimensional reduced objects through low-dimensional sketches. For
example, for $V\in\mathbb K^{m\times r},$ and $V^\Theta:=\Theta V\in\mathbb K^{k\times r},$ one stores $V^\Theta$ together with sketches of relevant operator-applied
bases. Reduced matrices, scalar products, and residual norms can then be
assembled from objects with row dimension $k$ instead of $m$, although
the cost of forming the sketches depends on the full-order operators and
sketch structure.

In \Cref{sec:opsrcsvd}, we combine randomized range finding with a sketched
least-squares reconstruction to construct an ortho-symplectic basis without
revisiting the snapshot ensemble. In \Cref{Sec:SketchedHam}, the same
sketching principle is applied to reduced-Hamiltonian assembly, where
least-squares reconstruction followed by explicit symmetrization yields a
Hamiltonian reduced system.

\subsection{Randomized symplectic basis generation via rcSVD}
\label{sec:rcsvd}

The cSVD of \Cref{Sec:cSVD} constructs an ortho-symplectic basis from the
dominant left singular subspace of $Y_s\in\mathbb C^{N\times n_s}$.
For large problems (i.e., snapshot matrices with hundreds of millions to billions of entries),
assembling and factorizing $Y_s$ may require substantial runtime and memory. Randomized low-rank approximation instead identifies
an approximate dominant range from a low-dimensional random sample and
performs the remaining SVD in the compressed space.

We set $\ell=n+p\leq\min\{N,n_s\},$
where $n$ is the target complex basis dimension and $p\geq0$ an
oversampling parameter that improves the probability of capturing the
dominant left singular subspace \cite{Halko2011}. Next, we draw a right random sketching matrix $
\Omega\in\mathbb C^{n_s\times\ell}$, for example Gaussian or the adjoint of an SRFT
embedding. This matrix is distinct from the left sketching matrices $\Theta$ of
\Cref{Sec:RSE}, which act on the ambient state dimension.

The first rcSVD stage forms the random sketch
$Y:=Y_s\Omega\in\mathbb C^{N\times\ell}.$ Writing $Y_s=[y_1,\ldots,y_{n_s}]$ and denoting the $j$-th row of
$\Omega$ by $\omega_j^\top:=e_j^\top\Omega$, we obtain $Y
=
\sum_{j=1}^{n_s}y_j\omega_j^\top.$
Thus $Y$ can be accumulated sequentially as $Y\leftarrow Y+y_j\omega_j^\top,$ without storing the full snapshot matrix. If rows of $\Omega$ are generated
on demand, only the $N\times\ell$ random sketch $Y$, the random parameters, and
the current snapshot or block need be retained.

Provided $Y$ has full column rank, a thin QR factorization
\begin{equation*}
Y=QR,
\qquad
Q\in\mathbb C^{N\times\ell},
\qquad
Q^*Q=I_\ell,
\end{equation*}
produces an orthonormal basis for the sampled range.  With high probability,
$\operatorname{range}(Q)$ approximates the dominant left singular subspace
of $Y_s$.

Conventional rcSVD then performs a second snapshot pass to form $B
:=
Q^*Y_s
\in\mathbb C^{\ell\times n_s},$
so that $QQ^*Y_s
=
QB.$
The matrix $B$ contains the snapshot coordinates in
$\operatorname{range}(Q)$ and determines the dominant directions of the
projected ensemble. We compute an SVD
$B=\widehat U\,\Sigma\,\widehat V^*$ and assume
$\operatorname{rank}(B)\geq n$. Since $Q$ has
orthonormal columns, the leading left singular vectors of $QB$ are
\begin{equation*}
U_n^{\mathrm{rc}}
:=
Q\,\widehat U_{(:,1:n)}
\in\mathbb C^{N\times n},
\end{equation*}
with $\left(U_n^{\mathrm{rc}}\right)^*U_n^{\mathrm{rc}}=I_n.$
The matrix $U_n^{\mathrm{rc}}$ is mapped to a real, symplectic basis via
\[
V_{\mathrm{rc}}
:=
\mathcal M(U_n^{\mathrm{rc}})
\in\mathbb R^{2N\times2n}.
\]
Thus, rcSVD avoids the full SVD of $Y_s$ and does not need to retain the complete
snapshot ensemble. Its principal stored quantities are
$Q\in\mathbb C^{N\times\ell}$ and
$B\in\mathbb C^{\ell\times n_s}$, requiring storage complexity $\mathcal O(N\ell+\ell n_s)$ instead of $\mathcal O(Nn_s)$.

Its main limitation in a streaming setting is the second data pass:
$Q$ is obtained in the first pass, whereas
$B=Q^*Y_s$ requires revisiting the snapshots. If they are not stored,
the training trajectories must be regenerated and processed.

In the following two sections, we introduce randomized methods
to address two computational bottlenecks of symplectic MOR.
First, \Cref{sec:opsrcsvd} presents a one-pass basis construction
that avoids the second snapshot pass required by conventional rcSVD.
Second, \Cref{Sec:SketchedHam} introduces a sketched reduced-Hamiltonian
assembly to reduce the computational cost of constructing the
reduced system. dThe complete workflow and the interaction between the two
proposed methods, from one-pass basis construction to
sketched reduced-Hamiltonian assembly, are illustrated in
\Cref{fig:randomized_offline_workflow}.

\begin{figure}[t]
\centering

\resizebox{0.96\textwidth}{!}{%
\begin{tikzpicture}[
    font=\small,
    >=Latex,
    sidebox/.style={
        operatorbox,
        minimum width=5.2cm,
        minimum height=1.45cm,
        text width=4.7cm,
        align=center
    }
]


\node[
    workflowstage,
    draw=workflowgreen!70,
    fill=workflowgreenbg,
    minimum height=1.75cm
] (ops) {};

\node[
    stagecircle,
    fill=workflowgreen,
    anchor=north west
] at ([xshift=4mm,yshift=-4mm]ops.north west) {1};

\node[
    font=\bfseries\large,
    anchor=north west
] at ([xshift=15mm,yshift=-4mm]ops.north west) {
    Ops-rcSVD
};

\node[
    anchor=north west,
    align=left,
    text=black!70,
    text width=11.0cm
] at ([xshift=15mm,yshift=-11.5mm]ops.north west) {
    Constructs an ortho-symplectic basis from a single snapshot pass
};


\node[
    sidebox,
    left=10mm of ops
] (snapshots) {
    \textbf{Snapshot data}
};

\draw[flow] (snapshots.east) -- (ops.west);


\node[
    outputnode,
    draw=workflowgreen!65,
    fill=workflowgreen!8,
    text=workflowgreen!85!black,
    below=3.5mm of ops
] (basisOutput) {
    ortho-symplectic basis matrix $V$
};

\draw[flow] (ops.south) -- (basisOutput.north);


\node[
    workflowstage,
    draw=workfloworange!80,
    fill=workfloworangebg,
    minimum height=1.75cm,
    below=3.5mm of basisOutput
] (ham) {};

\node[
    stagecircle,
    fill=workfloworange,
    anchor=north west
] at ([xshift=4mm,yshift=-4mm]ham.north west) {2};

\node[
    font=\bfseries\large,
    anchor=north west
] at ([xshift=15mm,yshift=-4mm]ham.north west) {
    Sketched reduced-Hamiltonian assembly
};

\node[
    anchor=north west,
    align=left,
    text=black!70,
    text width=11.0cm
] at ([xshift=15mm,yshift=-11.5mm]ham.north west) {
    Computes structure-preserving reduced operator components
};

\draw[flow] (basisOutput.south) -- (ham.north);


\node[
    sidebox,
    left=10mm of ham
] (operators) {
    \textbf{Full-order Hamiltonian}\\[0.5mm]
    operators and \\ 
    parameter dependence
};

\draw[flow] (operators.east) -- (ham.west);


\node[
    outputnode,
    draw=workfloworange!75,
    fill=workfloworange!8,
    text=workfloworange!90!black,
    below=3.5mm of ham
] (hamOutput) {
    reduced components
};

\draw[flow] (ham.south) -- (hamOutput.north);


\node[
    workflowstage,
    draw=workflowpurple!70,
    fill=workflowpurplebg,
    minimum height=1.75cm,
    below=3.5mm of hamOutput
] (rom) {};

\node[
    stagecircle,
    fill=workflowpurple,
    anchor=north west
] at ([xshift=4mm,yshift=-4mm]rom.north west) {3};

\node[
    font=\bfseries\large,
    anchor=north west
] at ([xshift=15mm,yshift=-4mm]rom.north west) {
    Hamiltonian ROM
};

\node[
    anchor=north west,
    align=left,
    text=black!70,
    text width=11.0cm
] at ([xshift=15mm,yshift=-11.5mm]rom.north west) {
    Assembles the sketched reduced Hamiltonian and simulates the ROM
};

\draw[flow] (hamOutput.south) -- (rom.north);


\node[
    sidebox,
    left=10mm of rom
] (trajectoryOutput) {
    \textbf{Reduced trajectory}\\[0.5mm]
    $\hat x_r(t;\mu)$
};

\draw[flow] (rom.west) -- (trajectoryOutput.east);

\end{tikzpicture}%
}
\caption{Randomized structure-preserving MOR workflow. Full-order simulations generate the
snapshot stream used by Ops-rcSVD to construct an ortho-symplectic basis. The basis and Hamiltonian operators then enter the sketched assembly, producing the components $\hat H_{r,q}$ from which
the Hamiltonian ROM is assembled online.}
\label{fig:randomized_offline_workflow}
\end{figure}

\section{One-pass sketched rcSVD}
\label{sec:opsrcsvd}

\subsection{One-pass construction and algorithm}
\label{sec:ops_rcsvd_construction}

Ops-rcSVD retains the randomized range-finding mechanism of the rcSVD while
eliminating its second snapshot pass. The difficulty is that the range basis
$Q$ is available only after the first pass, so the exact coefficient matrix
$B=Q^*Y_s$ cannot be accumulated while the snapshots are generated.

Following the standard single-pass randomized low-rank approximation paradigm
\cite{Halko2011}, we supplement the right range sketch with an independent
left sketching matrix $
\Theta\in\mathbb C^{k\times N},$ with $k\geq\ell.$

During the same pass in which $Y=Y_s\Omega$ 
is accumulated, we form
$Z:=\Theta Y_s\in\mathbb C^{k\times n_s}.$
For each snapshot $y_j$, we iteratively compute
\[
Y\leftarrow Y+y_j\omega_j^\top,
\qquad
Z_{(:,j)}\leftarrow\Theta y_j.
\] Thus, after one
streaming pass, $Y$ identifies the dominant range while $Z$ retains the
compressed information required for coefficient recovery.

As in rcSVD, we define
\[
Y=QR,\qquad Q^*Q=I_\ell,\qquad
P:=QQ^*,\qquad
B:=Q^*Y_s,\qquad
R_s:=(I-P)Y_s.
\]
Then, we obtain 
\begin{equation*}
Y_s=QB+R_s,
\qquad Q^*R_s=0,
\end{equation*}
and hence
\begin{equation*}
Z=(\Theta Q)B+\Theta R_s.
\end{equation*}
We therefore recover $B$ through the sketched least-squares problem
\begin{equation*}
B_{\mathrm{LS}}
:=
(\Theta Q)^\dagger Z
=
\arg\min_{X\in\mathbb C^{\ell\times n_s}}
\|\Theta QX-Z\|_F.
\end{equation*}
If $\Theta Q$ has full column rank, the minimizer is unique and
\begin{equation}
\label{eq:BLS_normal}
B_{\mathrm{LS}}
=
\bigl(Q^*\Theta^*\Theta Q\bigr)^{-1}
Q^*\Theta^*\Theta Y_s.
\end{equation}
In practice, the least-squares problem is solved by a thin QR factorization or an SVD of the small matrix $\Theta Q$, without explicitly forming a
pseudoinverse.

The two random maps have distinct roles: $\Omega$ identifies an approximate
dominant range of $Y_s$, whereas $\Theta$ preserves sufficient information
about the full snapshot ensemble to recover its coordinates after $Q$ is
known. Finally, we compute
$B_{\mathrm{LS}}=\widehat U\Sigma\widehat V^*$
and set $U_n:=Q\widehat U_{(:,1:n)}$. 
Then, we have $U_n\in\mathbb C^{N\times n}$ and $
U_n^*U_n=I_n$ and the symplectic basis matrix $V:=\mathcal M(U_n)\in\mathbb R^{2N\times2n}$
satisfies $V^\top V=I_{2n}$ and $V^\top J_{2N}V=J_{2n}.$ 
Thus, the one-pass reconstruction affects only the approximation of the
snapshot subspace; the final basis remains exactly ortho-symplectic in exact
arithmetic. 

\begin{algorithm}[t]
\caption{One-pass sketched rcSVD (Ops-rcSVD)}
\label{alg:onepass_rcsvd}
\begin{algorithmic}[1]
\Require Target rank $n$, oversampling parameter $p$, $\ell=n+p$, left-sketch
size $k\geq\ell$, right test matrix
$\Omega\in\mathbb C^{n_s\times\ell}$, and independent left sketch
$\Theta\in\mathbb C^{k\times N}$.
\Ensure Ortho-symplectic basis matrix $V\in\mathbb R^{2N\times2n}$.
\State $Y\gets0\in\mathbb C^{N\times\ell}$,
       $Z\gets0\in\mathbb C^{k\times n_s}$.
\For{$j=1,\ldots,n_s$}
    \State Read $y_j$ and the $j$-th row $\omega_j^\top$ of $\Omega$.
    \State $Y\gets Y+y_j\omega_j^\top$,
           $Z_{(:,j)}\gets\Theta y_j$.
\EndFor
\State Compute $Y=QR$ and $A\gets\Theta Q$.
\State Solve
\[
B_{\mathrm{LS}}
\gets
\arg\min_X\|AX-Z\|_F.
\]
\State Compute
$B_{\mathrm{LS}}=\widehat U\Sigma\widehat V^*$.
\State $U_n\gets Q\widehat U_{(:,1:n)}$,
       $V\gets\mathcal M(U_n)$.
\end{algorithmic}
\end{algorithm}

\begin{remark}[Batched processing and subsequent enrichment]
\label{rem:ops_batched_update}
Algorithm~\ref{alg:onepass_rcsvd} is written snapshot by snapshot for
simplicity. In the implementation, snapshots are processed in blocks. For a
block $Y_{\mathcal B}
=
[y_j]_{j\in\mathcal B}
\in\mathbb C^{N\times b}$
and the corresponding rows
$\Omega_{\mathcal B}\in\mathbb C^{b\times\ell}$, the two sketch updates are
performed as
\[
Y\leftarrow Y+Y_{\mathcal B}\Omega_{\mathcal B},
\qquad
Z_{(:,\mathcal B)}\leftarrow\Theta Y_{\mathcal B}.
\]
This is algebraically equivalent to the snapshot-wise formulation, but allows
the sketch updates to be carried out by matrix--matrix operations and limits
the required snapshot storage to one block. The block size controls the tradeoff between memory consumption
and the efficiency of matrix--matrix operations.
In our implementation, we use blocks of $b=16$ snapshots,
which keep the memory requirements moderate while enabling
blockwise sketch updates.

The sketching representation also permits the basis to be updated when new
snapshot data become available. We denote a new snapshot block by
$Y_{\mathrm{new}}\in\mathbb C^{N\times n_{\mathrm{new}}}$ and generate the corresponding new rows
$\Omega_{\mathrm{new}}\in\mathbb C^{n_{\mathrm{new}}\times\ell}$ of the
right test matrix. The accumulated sketches are extended as
\[
Y
\leftarrow
Y+Y_{\mathrm{new}}\Omega_{\mathrm{new}},
\qquad
Z
\leftarrow
\begin{bmatrix}
Z & \Theta Y_{\mathrm{new}}
\end{bmatrix}.
\]
A new QR factorization of the updated right sketch yields the enriched range
basis matrix $Q$. Since the left sketch $Z$ retains compressed information about
both the previous and newly added snapshots, the coefficient matrix can then
be reconstructed with respect to the updated range basis without revisiting the previously processed full-order snapshots. Updating
the SVD of $B_{\mathrm{LS}}$ subsequently provides the new reduced basis
and projector $P_n$.

Thus, subsequent enrichment requires retaining the accumulated sketches
$Y$ and $Z$. The previously computed
$B_{\mathrm{LS}}$ and $P_n$ alone are generally insufficient once
$\operatorname{range}(Q)$ changes.
\end{remark}
\subsection{Error analysis}
\label{sec:ops_rcsvd_error}

The one-pass method replaces the exact coefficients $B=Q^*Y_s$ by
$B_{\mathrm{LS}}$. Its error therefore separates into the randomized
range-finding residual and the additional left-sketch coefficient-recovery
error.

\begin{proposition}[Coefficient error bound for the left-sketch least-squares recovery]
\label{prop:coeff_error_B_BLS}
Let $Y_s\in\mathbb C^{N\times n_s}$ and
$Y=Y_s\Omega=QR$, with $Q^*Q=I_\ell$. Set $P:=QQ^*, B:=Q^*Y_s, R_s:=(I-P)Y_s,$
and $B_{\mathrm{LS}}
:=
(\Theta Q)^\dagger(\Theta Y_s).$
Assume that $\Theta\in\mathbb C^{k\times N}$ is an
$\varepsilon$-subspace embedding for $\operatorname{range}(Q)$,
with $0\leq\varepsilon<1$, and satisfies the approximate matrix
multiplication (AMM) bound
\begin{equation}\label{eqn:amm}
\left\|
Q^*(\Theta^*\Theta-I_N)R_s
\right\|_F
\le
\varepsilon_{\mathrm{AMM}}
\left(
\|Q\|_F
+
L\,\|Q\|_2
\right)
\|R_s\|_F
\end{equation}
for some $\varepsilon_{\mathrm{AMM}}>0$ and $L\geq0$.
Let $\gamma
:=
\varepsilon_{\mathrm{AMM}}
\left(
\|Q\|_F
+
L\,\|Q\|_2
\right)
=
\varepsilon_{\mathrm{AMM}}
(\sqrt{\ell} + L).$
Then
\begin{equation}
\label{eq:BLS_error_amm}
\|B_{\mathrm{LS}}-B\|_F
\le
\frac{\gamma}{1-\varepsilon}
\|(I-P)Y_s\|_F.
\end{equation}
\end{proposition}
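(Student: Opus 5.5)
We will start from the decomposition already recorded in the construction, $Y_s = QB + R_s$ with $R_s=(I-P)Y_s$ and $Q^*R_s=0$. Substituting it into the normal-equation form \eqref{eq:BLS_normal}, we isolate the recovery error.

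\textbf{Error identity.} Since $\Theta$ is an $\varepsilon$-subspace embedding for $\operatorname{range}(Q)$ with $\varepsilon<1$, the matrix $\Theta Q$ has full column rank. Hence $G:=Q^*\Theta^*\Theta Q$ is Hermitian positive definite, and $B_{\mathrm{LS}}=G^{-1}Q^*\Theta^*\Theta Y_s$. Inserting $Y_s=QB+R_s$ gives
\[
B_{\mathrm{LS}} = G^{-1}G B + G^{-1}Q^*\Theta^*\Theta R_s
= B + G^{-1}Q^*\Theta^*\Theta R_s .
\]
Using $Q^*R_s=0$, we may subtract $Q^*R_s$ inside and write
\[
B_{\mathrm{LS}}-B = G^{-1}\,Q^*(\Theta^*\Theta-I_N)R_s .
\]
This is the key step: the orthogonality of the residual converts the error into exactly the AMM quantity \eqref{eqn:amm}.

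\textbf{Bounding $G^{-1}$.} For any unit $z\in\mathbb C^\ell$, $y=Qz\in\operatorname{range}(Q)$ satisfies $\|y\|=1$. The embedding property then gives
\[
z^*Gz=\|\Theta Qz\|^2\ge 1-\varepsilon,
\]
so $\lambda_{\min}(G)\ge 1-\varepsilon$ and $\|G^{-1}\|_2\le (1-\varepsilon)^{-1}$.

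\textbf{Combining.} Using $\|MN\|_F\le\|M\|_2\|N\|_F$ together with \eqref{eqn:amm},
\[
\|B_{\mathrm{LS}}-B\|_F\le \frac{1}{1-\varepsilon}\,\varepsilon_{\mathrm{AMM}}\bigl(\|Q\|_F+L\|Q\|_2\bigr)\|R_s\|_F .
\]
Finally, because $Q$ has $\ell$ orthonormal columns, $\|Q\|_F=\sqrt{\ell}$ and $\|Q\|_2=1$, which yields $\gamma=\varepsilon_{\mathrm{AMM}}(\sqrt\ell+L)$ and hence \eqref{eq:BLS_error_amm}.

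The only point needing care is the error identity. The naive route of bounding $\|(\Theta Q)^\dagger\|\,\|\Theta R_s\|$ would not give a small factor, because $\Theta R_s$ itself is not small. The argument must therefore insert the zero term $Q^*R_s$ so that the cross term $Q^*(\Theta^*\Theta-I)R_s$ appears; everything else is routine.
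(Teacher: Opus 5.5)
Your proposal is correct and follows the paper's proof essentially step for step. The paper likewise uses the embedding to get $\lambda_{\min}(Q^*\Theta^*\Theta Q)\ge 1-\varepsilon$, substitutes $Y_s=QB+R_s$ and uses $Q^*R_s=0$ to obtain $B_{\mathrm{LS}}-B=(Q^*\Theta^*\Theta Q)^{-1}Q^*(\Theta^*\Theta-I_N)R_s$, and then concludes with $\|(Q^*\Theta^*\Theta Q)^{-1}\|_2\le(1-\varepsilon)^{-1}$ and the AMM bound.
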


\begin{proof}
Since $\Theta$ is an $\varepsilon$-subspace embedding for
$\operatorname{range}(Q)$, for every $z\in\mathbb C^\ell$, we have
\begin{equation}\label{eqn:embedding property}
(1-\varepsilon)\|z\|_2^2
\le
\|\Theta Qz\|_2^2
=
z^*Q^*\Theta^*\Theta Qz
\le
(1+\varepsilon)\|z\|_2^2
\end{equation}
due to \eqref{eq:subspace-embedding-norm} with $y = Qz$ and $\|Qz\|_2^2 = \|z\|_2^2$.
Hence $\lambda_{\min}(Q^*\Theta^*\Theta Q)
\ge
1-\varepsilon
>
0.$
Therefore $Q^*\Theta^*\Theta Q$ is invertible and
$\Theta Q$ has full column rank. In particular,
\eqref{eq:BLS_normal} is well defined.

Using the orthogonal decomposition $Y_s=QB+R_s, Q^*R_s=0,$
in \eqref{eq:BLS_normal} gives
\[
\begin{aligned}
B_{\mathrm{LS}} =
\bigl(Q^*\Theta^*\Theta Q\bigr)^{-1}
Q^*\Theta^*\Theta(QB+R_s)=
B+
\bigl(Q^*\Theta^*\Theta Q\bigr)^{-1}
Q^*\Theta^*\Theta R_s.
\end{aligned}
\]
Since $Q^*R_s=0$, the residual term can be written as $Q^*\Theta^*\Theta R_s
=
Q^*(\Theta^*\Theta-I_N)R_s.$ Consequently, it follows that
\[
B_{\mathrm{LS}}-B
=
\bigl(Q^*\Theta^*\Theta Q\bigr)^{-1}
Q^*(\Theta^*\Theta-I_N)R_s.
\]
The lower spectral bound supplied by the embedding property \eqref{eqn:embedding property} further implies
\[
\left\|
\bigl(Q^*\Theta^*\Theta Q\bigr)^{-1}
\right\|_2
=
\frac{1}{
\lambda_{\min}(Q^*\Theta^*\Theta Q)
}
\le
\frac{1}{1-\varepsilon}.
\]
Taking Frobenius norms and using
$\|AB\|_F\le\|A\|_2\|B\|_F$, we obtain
\[
\begin{aligned}
\|B_{\mathrm{LS}}-B\|_F
&\le
\left\|
\bigl(Q^*\Theta^*\Theta Q\bigr)^{-1}
\right\|_2
\left\|
Q^*(\Theta^*\Theta-I_N)R_s
\right\|_F
\\
&\le
\frac{1}{1-\varepsilon}
\left\|
Q^*(\Theta^*\Theta-I_N)R_s
\right\|_F.
\end{aligned}
\]
Finally, applying the AMM property \eqref{eqn:amm} and recalling that
$R_s=(I-P)Y_s$ yields \eqref{eq:BLS_error_amm}.
\end{proof}

In particular, if $(I-P)Y_s=0$, then $B_{\mathrm{LS}}=B$: the one-pass
recovery is exact whenever the randomized range contains the snapshot
ensemble.

\begin{remark}[High-probability embedding and AMM guarantees]
\label{rem:amm_bounds}
The coefficient-recovery bound of
Proposition~\ref{prop:coeff_error_B_BLS} relies on two properties of the
independent left sketch: a subspace-embedding property on
$\operatorname{range}(Q)$ and an AMM
property \eqref{eqn:amm} controlling its interaction with the residual $R_s$.

Since $\Theta$ is independent of the right test matrix $\Omega$, the
matrices $Q$ and $R_s$ may be regarded as fixed when analyzing the left
sketch. If the embedding property and the AMM bound \eqref{eqn:amm} hold with probabilities at least
$1-\delta_{\mathrm{emb}}$ and $1-\delta_{\mathrm{amm}}$, respectively,
then they hold simultaneously with probability at least
$1-\delta_{\mathrm{emb}}-\delta_{\mathrm{amm}}.$

For the complex SRFT used in Ops-rcSVD, Appendix~\ref{app:srft_amm} gives a
direct AMM estimate. With $\gamma_{\mathrm{SRFT}} = \frac{2(u+1)}{\sqrt{k}} \left( \sqrt{\ell}+L_\delta \right), L_\delta:=\sqrt{8\ln(N/\delta)}, 
0\leq u\leq\frac{\sqrt{k}}{1+L_\delta},$
Proposition~\ref{prop:srft_amm} yields, with probability at least
$1-e^{-u^2/4}-2\delta$,
\[
\|Q^*(\Theta^*\Theta-I_N)R_s\|_F
\le
\gamma_{\mathrm{SRFT}}\|R_s\|_F.
\]
Together with the corresponding subspace-embedding condition, these estimates
provide the assumptions of
Proposition~\ref{prop:coeff_error_B_BLS} with high probability.
\end{remark}

We next translate the coefficient estimate into a projection-error bound.

\begin{corollary}[Projection error bound for Ops-rcSVD]
\label{cor:proj_error_from_BLS_bound}
Adopt the notation of
Proposition~\ref{prop:coeff_error_B_BLS}, and let $1\le n\le\ell$. In particular, let $Y_s,\;Y,\;Q,\;P,\;B,\;R_s,\;$ and $B_{\mathrm{LS}}$
be defined as in Proposition~\ref{prop:coeff_error_B_BLS}.
Further, let
$B_{\mathrm{LS}}
=
\widehat U\Sigma\widehat V^*$
be an SVD of the recovered coefficient matrix, and define
$U_n:=Q\widehat U_{(:,1:n)},$
and 
$P_n:=U_nU_n^*.$
Furthermore, let $B_n$ be a best rank-$n$ approximation of $B$ in the spectral norm or Frobenius norm.
Then, for
$\|\cdot\|\in\{\|\cdot\|_2,\|\cdot\|_F\}$,
we obtain the bound
\begin{equation}
\label{eq:proj_err_basic}
\|(I-P_n)Y_s\|
\le
\|(I-P)Y_s\|
+
\|B-B_n\|
+
2\|B-B_{\mathrm{LS}}\|.
\end{equation}
Moreover, under the embedding and AMM assumptions of
Proposition~\ref{prop:coeff_error_B_BLS}, it holds that
\begin{equation}
\label{eq:proj_err_final_explicit}
\|(I-P_n)Y_s\|_F
\le
\|Y_s-(Y_s)_n\|_F
+
\left(
1+\frac{2\gamma}{1-\varepsilon}
\right)
\|(I-P)Y_s\|_F.
\end{equation}
\end{corollary}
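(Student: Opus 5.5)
The argument splits the error into a part orthogonal to $\operatorname{range}(Q)$ and a part inside it. Since $U_n=Q\widehat U_{(:,1:n)}$, we have $P_nP=P_n$ and $\operatorname{range}(P_n)\subset\operatorname{range}(Q)$. Hence
\[
(I-P_n)Y_s=(I-P)Y_s+(P-P_n)Y_s=R_s+Q\bigl(I-\widehat U_n\widehat U_n^*\bigr)B,
\qquad \widehat U_n:=\widehat U_{(:,1:n)}.
\]
The two summands are orthogonal, because $Q^*R_s=0$. The triangle inequality therefore gives
\[
\|(I-P_n)Y_s\|\le\|R_s\|+\|(I-\widehat U_n\widehat U_n^*)B\|,
\]
using that $Q$ has orthonormal columns. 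This holds for both the spectral and the Frobenius norm. A Pythagorean estimate would be sharper in the Frobenius case, but it is not needed for \eqref{eq:proj_err_basic}.

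Next we bound the in-range term. Write $\widehat P_n:=\widehat U_n\widehat U_n^*$ and insert $B_{\mathrm{LS}}$:
\[
(I-\widehat P_n)B=(I-\widehat P_n)B_{\mathrm{LS}}+(I-\widehat P_n)(B-B_{\mathrm{LS}}).
\]
Because $\|I-\widehat P_n\|_2\le1$, the second piece is at most $\|B-B_{\mathrm{LS}}\|$. For the first piece, $(I-\widehat P_n)B_{\mathrm{LS}}=B_{\mathrm{LS}}-(B_{\mathrm{LS}})_n$ is the optimal rank-$n$ truncation error of $B_{\mathrm{LS}}$ in both norms (Eckart--Young--Mirsky). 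Since $B_n$ has rank at most $n$, optimality gives
\[
\|B_{\mathrm{LS}}-(B_{\mathrm{LS}})_n\|\le\|B_{\mathrm{LS}}-B_n\|\le\|B_{\mathrm{LS}}-B\|+\|B-B_n\|.
\]
Summing the pieces yields \eqref{eq:proj_err_basic}. This comparison step is where the factor $2$ comes from, and it is the only point needing care: we must compare against the truncation of $B_{\mathrm{LS}}$, not of $B$. It works in either norm because the truncated SVD is simultaneously optimal in both.

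For \eqref{eq:proj_err_final_explicit} we work in the Frobenius norm. Proposition~\ref{prop:coeff_error_B_BLS} gives $\|B-B_{\mathrm{LS}}\|_F\le\frac{\gamma}{1-\varepsilon}\|(I-P)Y_s\|_F$. It remains to show $\|B-B_n\|_F\le\|Y_s-(Y_s)_n\|_F$. The singular values of $B=Q^*Y_s$ are those of $PY_s=QB$, and $PY_s$ is a contraction of $Y_s$. The interlacing inequality $\sigma_j(PY_s)\le\sigma_j(Y_s)$ alone does not control the tail sum, so we use the variational characterization instead. Let $(Y_s)_n=W_nW_n^*Y_s$ with $W_n$ the leading left singular vectors. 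Since $QB_n$ is a best rank-$n$ approximation of $PY_s$, and $PW_nW_n^*Y_s$ has rank at most $n$,
\[
\|B-B_n\|_F=\|PY_s-QB_n\|_F\le\|P(Y_s-W_nW_n^*Y_s)\|_F\le\|Y_s-(Y_s)_n\|_F.
\]
Substituting both estimates into the Frobenius version of \eqref{eq:proj_err_basic} gives the stated bound with constant $1+\frac{2\gamma}{1-\varepsilon}$.
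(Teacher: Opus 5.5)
Your proof is correct and follows essentially the same route as the paper. It uses the same splitting $(I-P_n)Y_s=(I-P)Y_s+Q(I-\widehat U_n\widehat U_n^*)B$, the same insertion of $B_{\mathrm{LS}}$ with the Eckart--Young comparison against $B_n$ (which produces the factor $2$), and the same rank-$n$ competitor $P(Y_s)_n$ to bound $\|B-B_n\|_F$ by $\|Y_s-(Y_s)_n\|_F$.

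One harmless slip: your aside that $\sigma_j(PY_s)\le\sigma_j(Y_s)$ cannot control the tail sum is wrong. That inequality holds for every $j$, so summing its squares over $j>n$ gives $\|B-B_n\|_F\le\|Y_s-(Y_s)_n\|_F$ directly, and either argument works.
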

\begin{proof}
Since $\operatorname{range}(P_n)
\subseteq
\operatorname{range}(P),$
we have $P_nP=P_n$. Therefore, we can decompose
\[
I-P_n
=
(I-P)+(I-P_n)P,
\]
and hence obtain
\[
(I-P_n)Y_s
=
(I-P)Y_s+(I-P_n)PY_s.
\]
Taking either the spectral or Frobenius norm and applying the triangle
inequality gives
\begin{equation}
\label{eq:proj_split_triangle}
\|(I-P_n)Y_s\|
\le
\|(I-P)Y_s\|
+
\|(I-P_n)PY_s\|.
\end{equation}
Recall that
$PY_s=QB, P_n=Q\Pi_nQ^*, \Pi_n
:=
\widehat U_{(:,1:n)}
\widehat U_{(:,1:n)}^*.$
Using $Q^*Q=I_\ell$, we obtain
\begin{equation}\label{eq:proj_error_equ}
(I-P_n)PY_s
=
(I-Q\Pi_nQ^*)QB
=
Q(I-\Pi_n)B.
\end{equation}
Since multiplication from the left by the column-orthonormal matrix $Q$
preserves both the spectral and Frobenius norms, we have
\[
\|(I-P_n)PY_s\|
=
\|(I-\Pi_n)B\|.
\]
We now compare $B$ with the recovered coefficient matrix
$B_{\mathrm{LS}}$. By the triangle inequality and
$\|I-\Pi_n\|_2\le1$, we get
\[
\begin{aligned}
\|(I-\Pi_n)B\|
&\le
\|(I-\Pi_n)B_{\mathrm{LS}}\|
+
\|(I-\Pi_n)(B-B_{\mathrm{LS}})\|
\\
&\le
\|(I-\Pi_n)B_{\mathrm{LS}}\|
+
\|B-B_{\mathrm{LS}}\|.
\end{aligned}
\]
Since $\Pi_n$ projects onto the leading $n$ left singular vectors of
$B_{\mathrm{LS}}$, we have
\[
\|(I-\Pi_n)B_{\mathrm{LS}}\|
=
\|B_{\mathrm{LS}}-(B_{\mathrm{LS}})_n\|.
\]
Since $B_n$ has rank at most $n$, it is a valid candidate in the
best rank-$n$ approximation problem for $B_{\mathrm{LS}}$. Thus, it holds that
\[
\begin{aligned}
\|B_{\mathrm{LS}}-(B_{\mathrm{LS}})_n\|
\le
\|B_{\mathrm{LS}}-B_n\|
\le
\|B-B_n\|
+
\|B-B_{\mathrm{LS}}\|.
\end{aligned}
\]
Combining the last three estimates yields
\[
\|(I-\Pi_n)B\|
\le
\|B-B_n\|
+
2\|B-B_{\mathrm{LS}}\|.
\]
Substitution this estimate into \eqref{eq:proj_split_triangle} while using \eqref{eq:proj_error_equ} proves
\eqref{eq:proj_err_basic}.

It remains to specialize the estimate to the Frobenius norm. Since
$PY_s=QB$ and $Q$ has orthonormal columns, multiplication by $Q$
preserves the singular values of $B$. Consequently, we have
\[
\|B-B_n\|_F
=
\|PY_s-(PY_s)_n\|_F.
\]
Now $P(Y_s)_n$ has rank at most $n$, so it is an admissible competitor
for the best rank-$n$ approximation of $PY_s$. Therefore, it follows that
\[
\begin{aligned}
\|PY_s-(PY_s)_n\|_F
\le
\|PY_s-P(Y_s)_n\|_F
=
\|P\bigl(Y_s-(Y_s)_n\bigr)\|_F
\le
\|Y_s-(Y_s)_n\|_F,
\end{aligned}
\]
where the last inequality uses that the orthogonal projector $P$ is
non-expansive in the Frobenius norm. Hence, we arrive at
\[
\|B-B_n\|_F
\le
\|Y_s-(Y_s)_n\|_F.
\]
Finally, Proposition~\ref{prop:coeff_error_B_BLS} gives
\[
\|B-B_{\mathrm{LS}}\|_F
\le
\frac{\gamma}{1-\varepsilon}
\|(I-P)Y_s\|_F.
\]
Applying these two estimates to \eqref{eq:proj_err_basic} yields \eqref{eq:proj_err_final_explicit}.
\end{proof}
\begin{remark}[Interpretation as symplectic projection error]
\label{rem:complex_real_projection_error}
For
\[
X_s=
\begin{bmatrix}
Q_s\\P_s
\end{bmatrix},
\qquad
Y_s=Q_s+\mathrm iP_s,
\qquad
V=\mathcal M(U_n),
\]
it holds for the orthogonal projection error that
\begin{equation}
\label{eq:real_complex_proj_identity}
\|(I_{2N}-VV^\top)X_s\|_F
=
\|(I_N-P_n)Y_s\|_F.
\end{equation}
If $V_{\mathrm c}=\mathcal M(U_n^{\mathrm c})$ is the deterministic cSVD
basis matrix, then
\[
\|(I_{2N}-V_{\mathrm c}V_{\mathrm c}^\top)X_s\|_F
=
\|Y_s-(Y_s)_n\|_F.
\]
Hence \eqref{eq:proj_err_final_explicit} becomes
\[
\|(I_{2N}-VV^\top)X_s\|_F
\le
\|(I_{2N}-V_{\mathrm c}V_{\mathrm c}^\top)X_s\|_F
+
\left(
1+\frac{2\gamma}{1-\varepsilon}
\right)
\|(I-P)Y_s\|_F.
\]
Thus, the deterministic cSVD error is augmented by the randomized range error
and the additional one-pass recovery contribution.
\end{remark}

\paragraph{Contribution of randomized range finding.}
For a Gaussian right test matrix $\Omega$ and
$p=\ell-n\geq2$, the classical estimate \cite[Theorem 10.5]{Tropp2011Ana}
\[
\mathbb E_\Omega\|(I-P)Y_s\|_F
\le
\left(1+\frac{n}{p-1}\right)^{1/2}
\left(\sum_{j>n}\sigma_j(Y_s)^2\right)^{1/2}
\]
shows that the expected range-finding error is controlled by the optimal
rank-$n$ singular-value tail. Corollary~\ref{cor:proj_error_from_BLS_bound} quantifies the additional one-pass contribution under the embedding and AMM assumptions imposed on the independent left sketch. Analogous estimates hold for structured right test
matrices such as the SRHT and SRFT.

\subsection{Computational cost, data access, and memory}
\label{sec:ops_rcsvd_cost}
Throughout the complexity analysis, we consider the
low-rank regime
\[
    \ell=n+p\ll\min\{N,n_s\},
    \qquad \ell\leq k\ll N. 
\]
No particular ordering between $N$ and $n_s$ is assumed. \footnote{If only one of the dimensions is large, the sketching
strategy can be adapted accordingly. For large $N$ but moderate
$n_s$, the right sketch may be omitted by constructing an
approximate basis directly during snapshot streaming, e.g.,
using an incremental SVD \cite{Brand2006,Fareed2018}.
The left sketch can still be used for coefficient recovery,
while orthonormalization of the final complex basis ensures
ortho-symplecticity. Conversely, for moderate $N$ but
large $n_s$, the left sketch may be omitted, recovering
the standard two-pass rcSVD of \cite{Herkert2023rand}.} 

Random test matrices and sketching maps are assumed not to be
stored explicitly. Their actions are implemented implicitly,
or their entries are regenerated reproducibly from the
corresponding random seeds or defining parameters.
Thus, their full matrix storage is excluded from the memory
estimates. In particular, explicitly forming and storing an
SRFT as a dense matrix would eliminate much of its
computational advantage, since both its memory footprint
and application cost would then scale as for a dense embedding.

Both rcSVD and Ops-rcSVD begin by forming the right range sample
$Y=Y_s\Omega$. In a snapshot-wise implementation, accumulating
$Y=\sum_{j=1}^{n_s}y_j\omega_j^\top$ 
requires $\mathcal O(N\ell)$ operations per snapshot and hence
$\mathcal O(N\ell n_s)$ operations in total. Then, $\mathcal O(N\ell^2)$ operations are required for the thin QR factorization. Ops-rcSVD also forms the left
snapshot sketch $Z=\Theta Y_s$ and the sketched range basis matrix $\Theta Q$.  With a structured transform such as the SRFT, the corresponding sketch
applications can instead be carried out in $\mathcal O\!\left((n_s+\ell)N\log N\right)$
operations. Once the sketches have been formed, the remaining least-squares
recovery and compressed SVD depend only on $k$, $\ell$, and $n_s$.
For $k\geq\ell$ and $\ell\leq n_s$, their leading cost is $
\mathcal O(k\ell n_s+\ell^2n_s).$ The principal snapshot-access and storage requirements are summarized in
Table~\ref{tab:basis-memory-comparison}.
\begin{table}[t]
    \centering
    \caption{Leading snapshot-access, arithmetic, and storage requirements under the assumptions
$\ell=n+p\ll\min\{N,n_s\}$ and $\ell\leq k\ll N$.
    Random maps are represented implicitly and lower-order terms are omitted.
    The Ops-rcSVD arithmetic cost assumes an SRFT left sketch.}
    \label{tab:basis-memory-comparison}
    \begin{tabular}{lccc}
        \hline
        & cSVD & rcSVD & Ops-rcSVD \\
        \hline
        Snapshot access
        & complete ensemble
        & two passes
        & one pass
        \\
        Computational cost
        & $\mathcal O(Nn_s\min\{N,n_s\})$
        & $\mathcal O(N\ell n_s)$
        & $\mathcal O(N\ell n_s+Nn_s\log N+k\ell n_s)$
        \\
        Storage
        & $\mathcal O(Nn_s)$
        & $\mathcal O(N\ell+\ell n_s+Nb)$
        & $\mathcal O(N\ell+kn_s+Nb)$
        \\
        \hline
    \end{tabular}
\end{table}
The principal distinction between streaming rcSVD and Ops-rcSVD is therefore one of data access and storage. The standard cSVD retains the complete snapshot matrix.
Streaming rcSVD, a variant of the rcSVD that forms the range sample $Y=Y_s\Omega$ sequentially by processing the snapshot ensemble blockwise and revisits the snapshots to form $B=Q^*Y_s$, avoids this storage cost but requires a second data pass. Ops-rcSVD removes this second access by storing the compressed snapshot sketch
$Z=\Theta Y_s\in\mathbb C^{k\times n_s},$
together with the reduced coefficient data. It therefore requires more memory
than streaming rcSVD, but its snapshot-dependent storage scales with the reduced
dimensions $k,\ell\ll N$ rather than with the full spatial dimension $N$.

A streaming block of $b$ complex double-precision snapshots occupies $16Nb$ bytes, independently of the total number of snapshots $n_s$. Although rcSVD and Ops-rcSVD have comparable arithmetic complexity, a second pass over the snapshot  ensemble can still be costly. If the snapshots are not retained in RAM, they must either be regenerated by repeating the full-order simulations or read again from external storage. For large snapshot ensembles, repeated transfers from external storage can contribute substantially to the overall offline runtime. These data-generation and data-transfer costs are not reflected by the floating-point operation count. Ops-rcSVD avoids this second access by retaining the compressed left sketch during the first pass. Its main advantage is therefore the reduction of snapshot data access rather than a reduction in floating-point work. The numerical experiments in \Cref{Sec:NumExp} quantify this tradeoff in terms of runtime and approximation accuracy.

\section{Sketched reduced-Hamiltonian assembly}
\label{Sec:SketchedHam}
We fix an ortho-symplectic reduced basis matrix
$V\in\mathbb{R}^{2N\times 2n}$. We consider quadratic Hamiltonians
of the form
\[
\mathcal{H}(x;\mu)
=
\tfrac12 x^\top H(\mu)x,
\qquad
H(\mu)=H(\mu)^\top,
\]
for which the reduced dynamics are
\begin{equation}\label{eq:red-lin-Ham_sys}
\dot x_r(t;\mu)
=
J_{2n}H_r(\mu)x_r(t;\mu),
\qquad
H_r(\mu):=V^\top H(\mu)V,
\qquad
x_r(t_0;\mu)=V^+x_0(\mu).
\end{equation}
If the Hamiltonian admits the parameter-separable decomposition
\begin{equation}
\label{eq:param_sep_H_motivation}
H(\mu)
=
\sum_{q=1}^{Q}
\vartheta_q(\mu)H_q,
\qquad
H_q=H_q^\top,
\end{equation}
then the reduced Hamiltonian inherits the corresponding representation
\[
H_r(\mu)
=
\sum_{q=1}^{Q}
\vartheta_q(\mu)H_{r,q},
\qquad
H_{r,q}:=V^\top H_qV.
\]
The matrices $H_{r,q}$ are parameter-independent and can therefore be
assembled offline. For PDE-scale problems, however, their formation may still
be costly because it requires high-dimensional operator applications and
reduced inner products. We replace this exact offline assembly by a sketched construction that reduces
the high-dimensional cost while retaining Hamiltonian structure.

\subsection{Sketched Hamiltonian construction and offline/online decomposition}
\label{sec:sketched_ham_construction}

We use a randomized subspace embedding
$\Theta_H\in\mathbb{R}^{k_H\times 2N}$ with $k_H\ll 2N$. For the probabilistic analysis below, we
assume that $\Theta_H$ is drawn independently of any randomness used in the
construction of the reduced basis matrix $V$. We begin with the sketched quadratic form
\begin{equation}
\label{eq:raw_sketch_ham}
\mathcal{H}^{\Theta_H}(x;\mu)
:=
\tfrac12(\Theta_Hx)^\top\Theta_HH(\mu)x
=
\tfrac12x^\top
\Theta_H^\top\Theta_HH(\mu)x.
\end{equation}
Since the skew-symmetric part of a matrix does not contribute to a quadratic
form, this expression depends only on
\[
H^{\Theta_H}(\mu)
:=
\operatorname{sym}
\bigl(\Theta_H^\top\Theta_HH(\mu)\bigr),
\qquad
\operatorname{sym}(A):=\tfrac12(A+A^\top).
\]
The full matrix $H^{\Theta_H}(\mu)$ serves only as an interpretation of the
sketched quadratic form and is never assembled explicitly. Restricting \eqref{eq:raw_sketch_ham} to the reduced space gives
\begin{equation*}
\mathcal{H}^{\Theta_H}_r(x_r;\mu)
=
\tfrac12x_r^\top M(\mu)x_r,
\end{equation*}
with
\begin{equation*}
M(\mu)
:=
(\Theta_HV)^\top\Theta_HH(\mu)V.
\end{equation*}
Note that we do not use the symmetric double-sided construction $(\Theta_H^\top\Theta_HV)^\top
H_q
(\Theta_H^\top\Theta_HV),$
since it introduces an additional sketch-induced approximation without reducing
the dominant reduced-product cost. In both the one- and double-sided
constructions, the final $r\times r$ reduced matrix, with $r=2n$, is formed
from products of $k_H\times r$ sketched quantities. This requires $\mathcal O(k_Hr^2)$ operations, since the
$r^2$ matrix entries are obtained from inner products of length $k_H$.
Therefore, double-sided sketching does not substantially reduce
the computational cost while introducing an additional approximation.

Although symmetrizing $M(\mu)$ would already yield a sketched reduced
Hamiltonian system, $V$ is generally not orthonormal with respect to the
sketched inner product induced by $\Theta_H$. Following the
sketched-inner-product framework of
\cite{Balabanov2019,Balabanov2021,Balabanov2019a}, we therefore introduce a
least-squares reconstruction that accounts for the sketched Gram matrix.

To make this reconstruction explicit, we define the orthogonal projector $P_V:=VV^\top$ and the operator residual
\begin{equation*}
R_H(\mu)
:=
(I_{2N}-P_V)H(\mu)V.
\end{equation*}
Because $V^\top V=I_{2n}$, the operator action admits the orthogonal
decomposition
\begin{equation}
\label{eq:operator_exact_decomp}
H(\mu)V
=
VH_r(\mu)+R_H(\mu),
\qquad
V^\top R_H(\mu)=0.
\end{equation}
Applying $\Theta_H$ yields
$\Theta_HH(\mu)V
=
(\Theta_HV)H_r(\mu)+\Theta_HR_H(\mu).$ Thus, $H_r(\mu)$ appears as the coefficient matrix of the component of
$H(\mu)V$ lying in $\operatorname{range}(V)$, while the sketched residual
accounts for the part orthogonal to the reduced space. We therefore define the least-squares problem
\begin{equation}
\label{eq:LS_core_problem}
H_{\mathrm{LS}}(\mu)
:=
\arg\min_{X\in\mathbb{R}^{2n\times2n}}
\|
\Theta_HVX-\Theta_HH(\mu)V
\|_F.
\end{equation}
If $\Theta_HV$ has full column rank, the minimizer is unique. Introducing
the sketched Gram matrix
\begin{equation*}
G
:=
(\Theta_HV)^\top(\Theta_HV),
\end{equation*}
the least-squares solution can be written as
\begin{equation*}
H_{\mathrm{LS}}(\mu)
=
G^{-1}
(\Theta_HV)^\top\Theta_HH(\mu)V.
\end{equation*}
In practice, $G^{-1}$ is not formed explicitly. Instead, the corresponding
linear systems may be solved using a Cholesky factorization of $G$, or
\eqref{eq:LS_core_problem} may be solved directly by QR factorization.

The \emph{sketched reduced Hamiltonian} is then defined by
\begin{equation*}
\widehat{\mathcal{H}}_r(\hat x_r;\mu)
:=
\tfrac12\hat x_r^\top\widehat H_r(\mu)\hat x_r,
\qquad
\widehat H_r(\mu)
:=
\operatorname{sym}\!\bigl(H_{\mathrm{LS}}(\mu)\bigr).
\end{equation*}
The corresponding sketched reduced Hamiltonian system is given by
\begin{equation}
\label{eq:sketchedHamSystemReduced}
\dot{\hat x}_r(t;\mu)
=
J_{2n}\widehat H_r(\mu) \hat x_r(t;\mu),
\qquad
\hat x_r(t_0;\mu)=V^+x_0(\mu).
\end{equation}
Because $\widehat H_r(\mu)$ is symmetric,
\eqref{eq:sketchedHamSystemReduced} is again a canonical Hamiltonian system.
Randomization therefore affects only the approximation of the reduced
Hamiltonian, not the Hamiltonian structure of the resulting ROM.

For fixed $V$, it is convenient to introduce $V^{\Theta_H}:=\Theta_HV,$ and $W^{\Theta_H}(\mu):=\Theta_HH(\mu)V.$
Then, the reduced quantities can be expressed entirely in terms of these
sketched matrices:
\[
G=(V^{\Theta_H})^\top V^{\Theta_H},
\qquad
M(\mu)=(V^{\Theta_H})^\top W^{\Theta_H}(\mu),
\qquad
\widehat H_r(\mu)
=
\operatorname{sym}\!\bigl(G^{-1}M(\mu)\bigr).
\]
For the parameter-separable decomposition \eqref{eq:param_sep_H_motivation}, all
high-dimensional operations can be confined to the offline stage. We first
compute the sketch of the reduced basis matrix $V^{\Theta_H}$ and the sketched Gram matrix $G$
and, for $q=1,\ldots,Q$, the operator sketches $W_q^{\Theta_H}
:=
\Theta_HH_qV$ and the sketched matrices $M_q
:=
(V^{\Theta_H})^\top W_q^{\Theta_H}.$ 
For each component, we solve $GX_q=M_q$
and define the sketched components
\begin{equation*}
\widehat H_{r,q}
:=
\operatorname{sym}(X_q)
=
\operatorname{sym}(G^{-1}M_q).
\end{equation*}
Linearity then gives
\begin{equation*}
\widehat H_r(\mu)
=
\sum_{q=1}^{Q}
\vartheta_q(\mu)\widehat H_{r,q}.
\end{equation*}
Once the reduced components have been precomputed, online assembly
therefore involves only reduced-dimensional quantities and is independent of
both the full-order dimension $N$ and the sketch dimension $k_H$.

For the sparse operators and CountSketch used in our experiments, this sketch
is accumulated directly during the application of $H_q$. In particular, each computed row of $H_qV$ is immediately added, with its random sign, to the corresponding output row. The full matrix
$H_qV\in\mathbb R^{2N\times2n}$ therefore never needs to be assembled or
stored. Once $W_q^{\Theta_H}$ has been formed, all subsequent products and
least-squares solves involve only the sketch and reduced dimensions. After
the reduced component $\widehat H_{r,q}$ has been assembled,
$W_q^{\Theta_H}$ can be discarded.

\subsection{Reduced-Hamiltonian perturbation analysis}
\label{sec:sketched_ham_perturbation}

We now bound the difference between $H_r(\mu)$ and
$\widehat H_r(\mu)$. By \eqref{eq:operator_exact_decomp}, the
least-squares reconstruction reproduces the component of $H(\mu)V$ in
$\operatorname{range}(V)$; the error is therefore governed by
$R_H(\mu)$.

\begin{lemma}[Uniform reduced-Hamiltonian perturbation bound]
\label{lem:delta_uniform_mu}
Let $\mathcal P$ be a parameter set,
$H(\mu)=H(\mu)^\top\in\mathbb R^{2N\times2N}$, and let
$V\in\mathbb R^{2N\times2n}$ be ortho-symplectic. Set
$H_r(\mu):=V^\top H(\mu)V$ and $P_V:=VV^\top$, and assume:
\begin{enumerate}
\item[(A1)] $H_r(\mu)$ is positive definite for all $\mu\in\mathcal P$,
with $\alpha:=\inf_{\mu\in\mathcal P}\lambda_{\min}(H_r(\mu))>0.$
\item[(A2)] $H(\mu)=\sum_{q=1}^Q\vartheta_q(\mu)H_q$, with
$H_q=H_q^\top$.
\item[(A3)]
$\vartheta_{\max}:=
\sup_{\mu\in\mathcal P}\sum_{q=1}^Q|\vartheta_q(\mu)|<\infty$.
\end{enumerate}
For each component, set
\[
R_q:=(I_{2N}-P_V)H_qV,
\qquad
R_H(\mu):=\sum_{q=1}^Q\vartheta_q(\mu)R_q,
\]
and define the uniform residual constant
\begin{equation*}
C_R:=
\sup_{\mu\in\mathcal P}
\sum_{q=1}^Q|\vartheta_q(\mu)|\|R_q\|_2
\le
\vartheta_{\max}\max_q\|R_q\|_2<\infty.
\end{equation*}
Consider the augmented space $Y_H:=\operatorname{range}[V,R_1,\ldots,R_Q]$ with $
d_H:=\dim(Y_H)\le2n(Q+1) $ and assume that
$\Theta_H\in\mathbb R^{k_H\times2N}$ is an
$\varepsilon$-subspace embedding for $Y_H$, with
$0\le\varepsilon<1$. Let
\[
G:=(\Theta_HV)^\top(\Theta_HV),\qquad
\widehat H_r(\mu):=
\operatorname{sym}\!\left(
G^{-1}(\Theta_HV)^\top\Theta_HH(\mu)V
\right),
\]
and set $E(\mu):=\widehat H_r(\mu)-H_r(\mu)$ and $\delta(\mu):=
\left\|
H_r(\mu)^{-1/2}E(\mu)H_r(\mu)^{-1/2}
\right\|_2.$ 
Then, for every $\mu\in\mathcal P$, we obtain
\begin{equation}
\label{eq:E_uniform_bound}
\|E(\mu)\|_2
\le
\frac{\varepsilon}{1-\varepsilon}\|R_H(\mu)\|_2
\le
\frac{\varepsilon}{1-\varepsilon}C_R.
\end{equation}
Moreover, it follows that
\begin{equation}
\label{eq:delta_uniform_bound}
\delta(\mu)
\le
\frac{\varepsilon}{1-\varepsilon}\frac{C_R}{\alpha},
\qquad
\mu\in\mathcal P
\end{equation}
and if \begin{equation}\label{eq:eps_uniform_choice}
\varepsilon <
\frac{\alpha}{\alpha+C_R} 
\end{equation}
we additionally obtain
$\sup_{\mu\in\mathcal P}\delta(\mu)<1.$
\end{lemma}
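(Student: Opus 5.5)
The proof mirrors the argument of Proposition~\ref{prop:coeff_error_B_BLS}, transplanted from the snapshot setting to the operator-action setting. We start from the orthogonal decomposition \eqref{eq:operator_exact_decomp}, $H(\mu)V=VH_r(\mu)+R_H(\mu)$ with $V^\top R_H(\mu)=0$, and substitute it into the least-squares formula. This gives
\[
G^{-1}(\Theta_HV)^\top\Theta_HH(\mu)V
=
H_r(\mu)+G^{-1}V^\top(\Theta_H^\top\Theta_H-I_{2N})R_H(\mu),
\]
where $V^\top R_H(\mu)=0$ has been used to insert $-I_{2N}$. Since $H_r(\mu)$ is symmetric, $\operatorname{sym}(H_r(\mu))=H_r(\mu)$. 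Therefore $E(\mu)=\operatorname{sym}(G^{-1}V^\top(\Theta_H^\top\Theta_H-I)R_H(\mu))$, and because $\|\operatorname{sym}(A)\|_2\le\|A\|_2$ it suffices to bound the unsymmetrized term.

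The term splits into two factors. For the first, $\operatorname{range}(V)\subseteq Y_H$, so exactly as in \eqref{eqn:embedding property}, using $\|Vz\|_2=\|z\|_2$, we get $\lambda_{\min}(G)\ge 1-\varepsilon$. Hence $G$ is invertible and $\|G^{-1}\|_2\le 1/(1-\varepsilon)$. The second factor is $\|V^\top(\Theta_H^\top\Theta_H-I)R_H(\mu)\|_2$. Here no separate AMM assumption is available, so the embedding property on the augmented space $Y_H$ must replace it. The key observation is that for any $z,w$ both $Vz$ and $R_H(\mu)w=\sum_q\vartheta_q(\mu)R_qw$ lie in $Y_H$, since $Y_H$ contains every $\operatorname{range}(R_q)$. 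This is why the augmented space is needed and where uniformity in $\mu$ enters. Lemma~\ref{lem:ip_from_embedding} then gives
\[
|z^\top V^\top(\Theta_H^\top\Theta_H-I)R_H(\mu)w|
\le\varepsilon\|Vz\|_2\|R_H(\mu)w\|_2
\le\varepsilon\|z\|_2\|R_H(\mu)\|_2\|w\|_2 .
\]
Taking the supremum over unit vectors yields a spectral-norm bound of $\varepsilon\|R_H(\mu)\|_2$. Combining both factors gives the first inequality in \eqref{eq:E_uniform_bound}. The second follows from the triangle inequality, $\|R_H(\mu)\|_2\le\sum_q|\vartheta_q(\mu)|\|R_q\|_2\le C_R$. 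Finiteness of $C_R$ follows from (A3) together with the fact that there are finitely many $R_q$.

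For \eqref{eq:delta_uniform_bound}, note that (A1) makes $H_r(\mu)^{-1/2}$ well defined, with $\|H_r(\mu)^{-1/2}\|_2^2=1/\lambda_{\min}(H_r(\mu))\le1/\alpha$. Submultiplicativity gives $\delta(\mu)\le\|E(\mu)\|_2/\alpha$, and we insert the previous bound. Finally, $\varepsilon/(1-\varepsilon)\cdot C_R/\alpha<1$ is equivalent to $\varepsilon C_R<\alpha(1-\varepsilon)$, that is, to $\varepsilon<\alpha/(\alpha+C_R)$. Because the bound on $\delta(\mu)$ is independent of $\mu$, this strict inequality carries over to the supremum.

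The main obstacle is the middle factor, which plays the role of the AMM bound in the earlier proposition. Here it must be obtained from the subspace embedding alone, and uniformly in $\mu$. The step works only because $R_H(\mu)$ ranges within the fixed, $\mu$-independent space $Y_H$. I would check carefully that Lemma~\ref{lem:ip_from_embedding} is applied to pairs of vectors both lying in $Y_H$. The remaining steps are routine norm manipulations.
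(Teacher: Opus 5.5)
Your proposal is correct and follows essentially the same argument as the paper. In both, the orthogonal decomposition gives $H_{\mathrm{LS}}(\mu)-H_r(\mu)=G^{-1}V^\top(\Theta_H^\top\Theta_H-I_{2N})R_H(\mu)$, the bound $\|G^{-1}\|_2\le(1-\varepsilon)^{-1}$ comes from the embedding on $\operatorname{range}(V)$, the middle factor is controlled by Lemma~\ref{lem:ip_from_embedding} applied to $Vz,\,R_H(\mu)w\in Y_H$, and the proof concludes with the nonexpansiveness of $\operatorname{sym}$ and $\|H_r(\mu)^{-1}\|_2\le\alpha^{-1}$.
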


\begin{proof}
Since $\operatorname{range}(V)\subseteq Y_H$, the embedding property may be
applied to $Vz$ for every $z\in\mathbb R^{2n}$. Moreover,
$V^\top V=I_{2n}$ implies $\|Vz\|_2=\|z\|_2$, while
$z^\top Gz=\|\Theta_HVz\|_2^2$. Hence
\[
(1-\varepsilon)\|z\|_2^2
\le z^\top Gz
\le(1+\varepsilon)\|z\|_2^2.
\]
Thus $\lambda_{\min}(G)\ge1-\varepsilon>0$, so $G$ is positive definite
and
\begin{equation}
\label{eq:Ginv_bounds}
\|G^{-1}\|_2\le\frac{1}{1-\varepsilon}.
\end{equation}
Using \eqref{eq:operator_exact_decomp} in the least-squares reconstruction
gives
\[
H_{\mathrm{LS}}(\mu)
=
H_r(\mu)
+
G^{-1}(\Theta_HV)^\top\Theta_HR_H(\mu).
\]
Since $V^\top R_H(\mu)=0$, the reconstruction error can be written as
\begin{equation}
\label{eq:HLS_residual_identity}
H_{\mathrm{LS}}(\mu)-H_r(\mu)
=
G^{-1}V^\top
(\Theta_H^\top\Theta_H-I_{2N})R_H(\mu).
\end{equation}
To bound the second factor, we take $z,w\in\mathbb R^{2n}$. Both
$Vz$ and $R_H(\mu)w$ belong to $Y_H$, while
$(Vz)^\top R_H(\mu)w=0$. Hence
Lemma~\ref{lem:ip_from_embedding} gives
\[
\begin{aligned}
\left|
z^\top V^\top
(\Theta_H^\top\Theta_H-I_{2N})R_H(\mu)w
\right|
\le
\varepsilon\|Vz\|_2\|R_H(\mu)w\|_2 \le
\varepsilon\|z\|_2\|R_H(\mu)\|_2\|w\|_2.
\end{aligned}
\]
Taking the supremum over unit vectors $z$ and $w$ therefore yields
\[
\left\|
V^\top(\Theta_H^\top\Theta_H-I_{2N})R_H(\mu)
\right\|_2
\le
\varepsilon\|R_H(\mu)\|_2.
\]
Together with \eqref{eq:Ginv_bounds} and
\eqref{eq:HLS_residual_identity}, this shows $\|H_{\mathrm{LS}}(\mu)-H_r(\mu)\|_2
\le
\frac{\varepsilon}{1-\varepsilon}\|R_H(\mu)\|_2.$
Since $H_r(\mu)$ is symmetric and
$
E(\mu)
=
\operatorname{sym}
\bigl(H_{\mathrm{LS}}(\mu)-H_r(\mu)\bigr),
$
the nonexpansiveness of symmetrization in the spectral norm gives \eqref{eq:E_uniform_bound}. Finally, $\lambda_{\min}(H_r(\mu))\ge\alpha$ implies
$\|H_r(\mu)^{-1}\|_2\le\alpha^{-1}$, and hence we obtain \eqref{eq:delta_uniform_bound}. Condition
\eqref{eq:eps_uniform_choice} makes the final bound strictly smaller than
one and therefore gives
$\sup_{\mu\in\mathcal P}\delta(\mu)<1$.
\end{proof}

\begin{remark}[Exactness on invariant reduced spaces]
\label{rem:sketched_ham_invariant_exact}
If $H(\mu)V\subseteq\operatorname{range}(V),$ 
then $R_H(\mu)=0$ and therefore $H_{\mathrm{LS}}(\mu)
=
\widehat H_r(\mu)
=
H_r(\mu).$ Thus, the sketch introduces no reduced-Hamiltonian error on an invariant
reduced space.
\end{remark}

The lemma is a deterministic result under the stated embedding assumption. Since $\Theta_H$ is independent of the randomness used to construct $V$,
the realized basis matrix $V$, and hence the space $Y_H$, is fixed with respect
to the randomness in $\Theta_H$. Standard oblivious subspace-embedding results \cite[Section~3.2]{Balabanov2019} therefore apply directly to
$Y_H$. For CountSketch, SRHT, and SRFT embeddings, standard oblivious subspace-embedding results ensure the required \(\varepsilon\)-subspace embedding with probability at least \(1-\delta_{\mathrm{emb}}\), provided that the sketch dimension \(k_H\) is chosen sufficiently large; see, e.g., \cite{Woodruff2014,Tropp2011Ana}.

\subsection{Energy-norm trajectory error}
\label{sec:sketched_ham_trajectory_error}

In this section, we convert the reduced-Hamiltonian perturbation into a trajectory-error
bound. For positive definite quadratic Hamiltonians, energy conservation
avoids the exponential-in-time factor of a generic linear-system
perturbation estimate. We omit the parameter dependence here to simplify the notation and improve readability. Our argument follows a similar energy-norm variation-of-constants argument as in the port-Hamiltonian error analysis of \cite{RettbergEtAl2024}, exploiting
the Hamiltonian structure to control the propagator in the energy norm.

\begin{theorem}[Energy-norm error bound for a sketched quadratic Hamiltonian]
\label{thm:energy_error_linearHam}
Let
$H_r\in\mathbb{R}^{2n\times2n}$ be symmetric positive definite and consider a reduced linear Hamiltonian system as in  \eqref{eq:red-lin-Ham_sys}.
Let $\widehat H_r=\widehat H_r^\top$, set $E:=\widehat H_r-H_r,$ and assume $\delta
=
\left\|
H_r^{-1/2}EH_r^{-1/2}
\right\|_2
<
1.$

Then $\widehat H_r$ is positive definite. If
$\widehat x_r(t)$ solves the sketched reduced system \eqref{eq:sketchedHamSystemReduced} then, for all $t\ge t_0$,
\begin{equation}
\label{eq:energy_error_bound}
\|
\widehat x_r(t)-x_r(t)
\|_{H_r}
\le
(t-t_0)
\Bigl\|
H_r^{1/2}J_{2n}H_r^{1/2}
\Bigr\|_2
\delta
\sqrt{\frac{1+\delta}{1-\delta}}
\,
\|x_{r,0}\|_{H_r},
\end{equation}
where
\[
\|y\|_{H_r}:=\sqrt{y^\top H_ry}, \qquad y\in \R^{2n}.
\]
\end{theorem}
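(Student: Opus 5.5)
First, positive definiteness of $\widehat H_r$: write $\widehat H_r = H_r^{1/2}(I+H_r^{-1/2}EH_r^{-1/2})H_r^{1/2}$. Since $\|H_r^{-1/2}EH_r^{-1/2}\|_2=\delta<1$, the middle factor has spectrum in $[1-\delta,1+\delta]$, so $\widehat H_r$ is positive definite. The same factorization gives the norm equivalence
\[
(1-\delta)\|y\|_{H_r}^2\le y^\top\widehat H_r y\le(1+\delta)\|y\|_{H_r}^2 .
\]

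Next, the error equation. Set $e(t):=\widehat x_r(t)-x_r(t)$, so $e(t_0)=0$ because both systems start from $x_{r,0}=V^+x_0$. Subtracting the two ODEs and writing $J_{2n}\widehat H_r=J_{2n}H_r+J_{2n}E$ gives
\[
\dot e=J_{2n}H_re+J_{2n}E\,\widehat x_r .
\]
The natural choice is to linearize around the \emph{exact} reduced propagator. By variation of constants,
\[
e(t)=\int_{t_0}^t e^{(t-s)J_{2n}H_r}J_{2n}E\,\widehat x_r(s)\,ds .
\]
The key structural fact is that $e^{\tau J_{2n}H_r}$ is an isometry in $\|\cdot\|_{H_r}$, because $\frac{d}{d\tau}\|y(\tau)\|_{H_r}^2=2y^\top H_rJ_{2n}H_ry=0$ by skew-symmetry of $J_{2n}$. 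Hence
\[
\|e(t)\|_{H_r}\le\int_{t_0}^t\|J_{2n}E\widehat x_r(s)\|_{H_r}\,ds .
\]

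To bound the integrand, substitute $E=H_r^{1/2}\widetilde E H_r^{1/2}$ with $\|\widetilde E\|_2=\delta$. Then
\[
\|J_{2n}Ey\|_{H_r}=\|H_r^{1/2}J_{2n}H_r^{1/2}\widetilde EH_r^{1/2}y\|_2\le\|H_r^{1/2}J_{2n}H_r^{1/2}\|_2\,\delta\,\|y\|_{H_r}.
\]
It remains to control $\|\widehat x_r(s)\|_{H_r}$ uniformly in $s$. Since $\widehat H_r$ is symmetric, the sketched Hamiltonian $\tfrac12\widehat x_r^\top\widehat H_r\widehat x_r$ is conserved along $\widehat x_r$. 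Combining this with the norm equivalence,
\[
(1-\delta)\|\widehat x_r(s)\|_{H_r}^2\le \widehat x_r(s)^\top\widehat H_r\widehat x_r(s)=x_{r,0}^\top\widehat H_rx_{r,0}\le(1+\delta)\|x_{r,0}\|_{H_r}^2 ,
\]
which yields $\|\widehat x_r(s)\|_{H_r}\le\sqrt{(1+\delta)/(1-\delta)}\,\|x_{r,0}\|_{H_r}$. Integrating this constant bound over $[t_0,t]$ gives the factor $(t-t_0)$ and hence \eqref{eq:energy_error_bound}.

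The main obstacle is organizing the argument so the constants come out exactly as stated. The perturbation must be placed on $\widehat x_r$ and not on $x_r$: that choice produces the factor $\sqrt{(1+\delta)/(1-\delta)}$, and it requires propagating with $H_r$, whose isometry holds in the $H_r$-norm where the error is measured. Once that choice is made, every step is routine.
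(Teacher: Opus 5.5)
Your proposal is correct and follows essentially the same route as the paper: you write the error equation around the exact reduced propagator $e^{(t-s)J_{2n}H_r}$, use its $H_r$-isometry, factor $E=H_r^{1/2}KH_r^{1/2}$ to extract $\|H_r^{1/2}J_{2n}H_r^{1/2}\|_2\,\delta$, and bound $\|\widehat x_r(s)\|_{H_r}$ by combining conservation of the sketched energy with the norm equivalence. The only cosmetic difference is that you verify the isometry by differentiating $\|y(\tau)\|_{H_r}^2$, whereas the paper phrases it as $\Phi(t)^\top H_r\Phi(t)=H_r$.
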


\begin{proof}
We set $A:=J_{2n}H_r,$ and $e(t):=\widehat x_r(t)-x_r(t).$
Subtracting the exact \eqref{eq:red-lin-Ham_sys} and perturbed \eqref{eq:sketchedHamSystemReduced} reduced systems gives the error equation
\[
\dot e(t)
=
Ae(t)+J_{2n}E\widehat x_r(t),
\qquad
e(t_0)=0.
\]
Variation of constants therefore yields
\begin{equation}
\label{eq:VOC}
e(t)
=
\int_{t_0}^{t}
\Phi(t)\Phi(s)^{-1}
J_{2n}E\widehat x_r(s)\,\mathrm ds,
\end{equation}
where $\Phi$ denotes the fundamental solution of the unperturbed system
$\dot y=Ay$.

The Hamiltonian structure implies that the unperturbed flow preserves the
$H_r$-energy norm. Indeed, since $A^\top H_r+H_rA=0$, the fundamental solution satisfies
$\Phi(t)^\top H_r\Phi(t)=H_r$ and therefore
$\|\Phi(t)y\|_{H_r}=\|y\|_{H_r}$ for all $y\in\mathbb R^{2n}.$
From that it follows that $\|\Phi(t)\Phi(s)^{-1}y\|_{H_r}
=
\|y\|_{H_r}$ for all $y\in\mathbb R^{2n}.$
 Taking the $H_r$-norm in
\eqref{eq:VOC} therefore gives
\begin{equation}
\label{eq:bound1}
\|e(t)\|_{H_r}
\le
\int_{t_0}^{t}
\|J_{2n}E\widehat x_r(s)\|_{H_r}\,\mathrm ds.
\end{equation}
For any $y\in\mathbb R^{2n}$, we have
\[
\|J_{2n}Ey\|_{H_r}
=
\|H_r^{1/2}J_{2n}Ey\|_2
\le
\|
H_r^{1/2}J_{2n}EH_r^{-1/2}
\|_2
\|y\|_{H_r}.
\]
Applying this estimate to \eqref{eq:bound1} yields
\begin{equation}
\label{eq:bound2}
\|e(t)\|_{H_r}
\le
\|
H_r^{1/2}J_{2n}EH_r^{-1/2}
\|_2
\int_{t_0}^{t}
\|\widehat x_r(s)\|_{H_r}\,\mathrm ds.
\end{equation}
Next we define $K:=H_r^{-1/2}EH_r^{-1/2}$, which implies 
$\delta = \|K\|_2.$ Since we have $E=H_r^{1/2}KH_r^{1/2},$
we obtain
\[
H_r^{1/2}J_{2n}EH_r^{-1/2}
=
\bigl(H_r^{1/2}J_{2n}H_r^{1/2}\bigr)K,
\]
and therefore arrive at
\begin{equation}
\label{eq:HJEBH_bound_delta}
\|
H_r^{1/2}J_{2n}EH_r^{-1/2}
\|_2
\le
\|
H_r^{1/2}J_{2n}H_r^{1/2}
\|_2\,\delta.
\end{equation}
It remains to control the perturbed trajectory in the $H_r$-norm.
Since $ H_r^{-1/2}\widehat H_rH_r^{-1/2}
=
I+K$ and $\|K\|_2=\delta<1$, the eigenvalues of $I+K$ lie in
$[1-\delta,1+\delta]$. Thus $\widehat H_r$ is positive definite and
\[
(1-\delta)\|y\|_{H_r}^2
\le
\|y\|_{\widehat H_r}^2
\le
(1+\delta)\|y\|_{H_r}^2
\qquad
\forall\,y\in\mathbb R^{2n}.
\]
This means that the perturbed system is itself Hamiltonian with symmetric positive definite
Hessian $\widehat H_r$, so it conserves the corresponding energy norm:
\[
\|\widehat x_r(s)\|_{\widehat H_r}
=
\|x_{r,0}\|_{\widehat H_r}.
\]
Using the norm equivalence above first for $\widehat x_r(s)$ and then for
the initial value gives
\[
\|\widehat x_r(s)\|_{H_r}
\le
\sqrt{\frac{1+\delta}{1-\delta}}
\|x_{r,0}\|_{H_r}.
\]
Substituting this estimate and
\eqref{eq:HJEBH_bound_delta} into \eqref{eq:bound2} yields \eqref{eq:energy_error_bound}.
\end{proof}

The estimate grows at most linearly in time because the unperturbed
Hamiltonian flow is an $H_r$-isometry. The factor $\omega_r
:=
\Bigl\|
H_r^{1/2}J_{2n}H_r^{1/2}
\Bigr\|_2$ is the largest frequency scale of the reduced dynamics in energy
coordinates, so the error is governed by $\omega_r$, the relative
Hamiltonian perturbation $\delta$, and elapsed time.

\subsection{High-probability uniform-in-parameter error guarantee}
\label{sec:sketched_ham_high_probability}

We finally combine the perturbation, embedding, and trajectory estimates. In addition to the assumptions of Lemma~\ref{lem:delta_uniform_mu}, we assume
\begin{equation}
\label{eq:beta_uniform}
\beta
:=
\sup_{\mu\in\mathcal P}
\|H_r(\mu)\|_2
<
\infty, \qquad
\sup_{\mu\in\mathcal P}
\|x_{r,0}(\mu)\|_{H_r(\mu)}
<
\infty.
\end{equation}
\begin{corollary}[High-probability uniform-in-parameter energy-norm error bound]
\label{cor:hp_uniform_energy_error}
Adopt the setting of Lemma~\ref{lem:delta_uniform_mu} and assume
\eqref{eq:beta_uniform} holds. Let
\(\delta_{\mathrm{emb}}\in(0,1)\), and
\[
\varepsilon<\frac{\alpha}{\alpha+C_R}.
\]
Suppose \(k_H\) is chosen such that \(\Theta_H\) is an
\(\varepsilon\)-subspace embedding for
\(Y_H=\operatorname{range}[V,R_1,\ldots,R_Q]\)
with probability at least \(1-\delta_{\mathrm{emb}}\), and define
\[
\delta_*:=
\frac{\varepsilon}{1-\varepsilon}\frac{C_R}{\alpha}<1.
\]
Then, with probability at least \(1-\delta_{\mathrm{emb}}\),
\[\delta(\mu)\le\delta_*<1\qquad\forall\,\mu\in\mathcal P,
\]
and hence \(\widehat H_r(\mu)\) is positive definite for every
\(\mu\in\mathcal P\).

If \(\widehat x_r(t;\mu)\) solves the sketched reduced Hamiltonian system \eqref{eq:sketchedHamSystemReduced}
then, for every \(\mu\in\mathcal P\) and
\(t\in[t_0,t_{\textrm{end}}]\),
\begin{equation}
\label{eq:hp_uniform_energy_error}
\|
\widehat x_r(t;\mu)-x_r(t;\mu)
\|_{H_r(\mu)}
\le
(t-t_0)\,
\beta\,
\delta_*
\sqrt{\frac{1+\delta_*}{1-\delta_*}}
\,
\sup_{\mu\in\mathcal P} \|x_{r,0}(\mu)\|_{H_r(\mu)}.
\end{equation}
\end{corollary}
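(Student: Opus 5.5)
The argument conditions on the event on which $\Theta_H$ is an $\varepsilon$-subspace embedding for $Y_H$; by assumption this event has probability at least $1-\delta_{\mathrm{emb}}$. Everything after that is deterministic. Since $\Theta_H$ is drawn independently of $V$, the space $Y_H$ is fixed with respect to $\Theta_H$, so the assumed embedding probability applies directly to the realized $Y_H$.

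\textbf{Uniform relative perturbation.} On the embedding event, Lemma~\ref{lem:delta_uniform_mu} applies for every $\mu\in\mathcal P$ simultaneously. This is the point where uniformity comes from: a single embedding of the parameter-independent space $Y_H=\operatorname{range}[V,R_1,\ldots,R_Q]$ covers all $R_H(\mu)$ at once. The bound \eqref{eq:delta_uniform_bound} then gives $\delta(\mu)\le\delta_*$ for all $\mu$. The hypothesis $\varepsilon<\alpha/(\alpha+C_R)$ is equivalent to $\varepsilon C_R<(1-\varepsilon)\alpha$, hence to $\delta_*<1$. Theorem~\ref{thm:energy_error_linearHam}, applied with $H_r=H_r(\mu)$, which is positive definite by (A1), then yields positive definiteness of $\widehat H_r(\mu)$ for every $\mu$.

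\textbf{Trajectory bound.} For fixed $\mu$, Theorem~\ref{thm:energy_error_linearHam} gives
\[
\|\widehat x_r(t;\mu)-x_r(t;\mu)\|_{H_r(\mu)}\le (t-t_0)\,\omega_r(\mu)\,\delta(\mu)\sqrt{\tfrac{1+\delta(\mu)}{1-\delta(\mu)}}\,\|x_{r,0}(\mu)\|_{H_r(\mu)}.
\]
Three estimates make the right-hand side uniform in $\mu$:
\begin{itemize}
\item The map $s\mapsto s\sqrt{(1+s)/(1-s)}$ is increasing on $[0,1)$, so $\delta(\mu)$ can be replaced by $\delta_*$.
\item The initial-value factor is bounded by its supremum, which is finite by \eqref{eq:beta_uniform}.
\item The frequency factor satisfies
\[
\omega_r(\mu)=\|H_r^{1/2}J_{2n}H_r^{1/2}\|_2\le\|H_r^{1/2}\|_2^2\,\|J_{2n}\|_2=\|H_r(\mu)\|_2\le\beta,
\]
using $\|J_{2n}\|_2=1$, since $J_{2n}$ is orthogonal.
\end{itemize}
Combining these gives \eqref{eq:hp_uniform_energy_error} for all $t\in[t_0,t_{\mathrm{end}}]$ and all $\mu\in\mathcal P$ on the same event.

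The only subtle step is the uniformity of the probability. The event must not depend on $\mu$, and the construction guarantees this: $Y_H$ is independent of $\mu$ by (A2), so no union bound over parameters is needed. The remaining steps are routine monotonicity and norm estimates.
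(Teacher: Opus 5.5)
Your proposal is correct and follows essentially the same route as the paper's proof. You restrict to the $\mu$-independent embedding event for $Y_H$, invoke Lemma~\ref{lem:delta_uniform_mu} to get $\delta(\mu)\le\delta_*<1$ uniformly, and apply Theorem~\ref{thm:energy_error_linearHam} for each fixed $\mu$; you then make the bound uniform via monotonicity of $s\mapsto s\sqrt{(1+s)/(1-s)}$ and $\|H_r(\mu)^{1/2}J_{2n}H_r(\mu)^{1/2}\|_2\le\|H_r(\mu)\|_2\le\beta$, exactly as the paper does, with your explicit checks of $\varepsilon<\alpha/(\alpha+C_R)\iff\delta_*<1$ and $\|J_{2n}\|_2=1$ merely filling in steps the paper leaves implicit.
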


\begin{proof}
We consider a realization for which \(\Theta_H\) is an
\(\varepsilon\)-subspace embedding for \(Y_H\). By Lemma~\ref{lem:delta_uniform_mu}, we have $\delta(\mu)
\le
\delta_*
<
1$ for all $\mu\in\mathcal P,$
and hence \(\widehat H_r(\mu)\) is positive definite for every
\(\mu\in\mathcal P\). For any fixed \(\mu\in\mathcal P\), Theorem~\ref{thm:energy_error_linearHam}
therefore applies and gives
\[
\|
\widehat x_r(t;\mu)-x_r(t;\mu)
\|_{H_r(\mu)}
\le
(t-t_0)
\Bigl\|
H_r(\mu)^{1/2}J_{2n}H_r(\mu)^{1/2}
\Bigr\|_2
f\bigl(\delta(\mu)\bigr)
\,
\|x_{r,0}(\mu)\|_{H_r(\mu)},
\]
where
\[
f(\delta)
=
\delta\sqrt{\frac{1+\delta}{1-\delta}},
\qquad 0\le\delta<1.
\]
Since it holds that $\Bigl\|
H_r(\mu)^{1/2}J_{2n}H_r(\mu)^{1/2}
\Bigr\|_2
\le
\|H_r(\mu)\|_2
\le
\beta,$ and \(f\) is increasing on \([0,1)\), the bound
\(\delta(\mu)\le\delta_*\) yields \eqref{eq:hp_uniform_energy_error}.
Since the embedding property holds with probability at least
\(1-\delta_{\mathrm{emb}}\), the result follows.
\end{proof}

Thus, the sketch dimension $k_H$ controls a high-probability trajectory
bound that holds uniformly over parameters and the prescribed time interval.
The estimate depends on the reduced coercivity $\alpha$, operator residual
constant $C_R$, spectral scale $\beta$, the uniform initial-energy bound, and
embedding accuracy $\varepsilon$, while the sketched ROM remains
Hamiltonian.

\section{Numerical experiments}
\label{Sec:NumExp}

We investigate the two randomized components introduced in this work,
separately and within the complete offline pipeline. \Cref{sec:numexp-models}
introduces the linear Schr\"odinger equation and lossless LC resonator network.
\Cref{sec:numexp-ops} compares Ops-rcSVD with deterministic and randomized
basis-construction methods, while \Cref{sec:numexp-sketched-rom} combines
Ops-rcSVD with the sketched reduced-Hamiltonian assembly and assesses the
resulting ROM accuracy. 

\subsection{Model problems}
\label{sec:numexp-models}

We consider two linear Hamiltonian systems with quadratic,
parameter-dependent Hamiltonians: a finite-difference discretization of the
linear Schr\"odinger equation and a lossless network of coupled LC resonators.

All experiments were performed on a system with $64\,\mathrm{GB}$ RAM and
a 13th Gen Intel Core i7-13700K processor. Further implementation details are
provided with the source code which is available at Darus\footnote{https://doi.org/10.18419/DARUS-6508}.

\subsubsection{Linear Schr\"odinger equation}
\label{sec:numexp-model-schrodinger}

We consider a parameter-dependent linear Schr\"odinger equation. Such equations
are standard benchmarks for structure-preserving MOR; see, e.g.,
\cite{MaboudiAfkham2017,KarasozenUzunca2018}. Here we use a two-dimensional problem with a parameter-dependent external potential and omit the nonlinear interaction term:
\begin{equation*}
\mathrm{i}\,\partial_t \psi(\xi_1,\xi_2,t;\mu)
=
\left(
-\frac{1}{2}\Delta+\mu W(\xi_2)
\right)
\psi(\xi_1,\xi_2,t;\mu),
\qquad
(\xi_1,\xi_2)\in(0,1)^2,
\end{equation*}
with periodic conditions at \(\xi_1\in\{0,1\}\) and homogeneous Dirichlet conditions at \(\xi_2\in\{0,1\}\), i.e.,
\[
\psi(0,\xi_2,t;\mu)=\psi(1,\xi_2,t;\mu),
\quad
\partial_{\xi_1}\psi(0,\xi_2,t;\mu)=\partial_{\xi_1}\psi(1,\xi_2,t;\mu),
\quad
\psi(\xi_1,0,t;\mu)=\psi(\xi_1,1,t;\mu)=0,
\]
and parameter domain $\mathcal P=[1,2].$ Using
\[
b(s)
=
\begin{cases}
\exp\!\left(-\dfrac{1}{1-s^2}\right), & |s|<1,\\
0, & |s|\ge1,
\end{cases}
\]
we define
\begin{equation*}
\psi_0(\xi_1,\xi_2)
=
c_0\,b\!\left(\frac{\xi_2-0.30}{0.16}\right),
\qquad
W(\xi_2)
=
\frac{
b\!\left(\dfrac{\xi_2-0.60}{0.14}\right)
}{
\left\|
b\!\left(\dfrac{\cdot-0.60}{0.14}\right)
\right\|_{L^\infty(0,1)}
},
\end{equation*}
where $c_0$ normalizes $\psi_0$ in $L^2((0,1)^2)$.

Using second-order central differences on a tensor-product grid, with
\(N=N_{\xi_1}N_{\xi_2}\), we obtain
\begin{equation*}
L
=
I_{N_{\xi_2}}\otimes L_{\xi_1}
+
L_{\xi_2}\otimes I_{N_{\xi_1}},
\qquad
L_{\xi_i}
=
\frac{1}{h_{\xi_i}^2}\operatorname{tridiag}(-1,2,-1),
\quad i=1,2,
\end{equation*}
where \(L_{\xi_1}\) additionally contains the entries associated
with the periodic boundary conditions. The discrete operator is
\begin{equation*}
A(\mu)=\frac12 L+\mu W_h,
\qquad
W_h=\operatorname{diag}(w_h),
\end{equation*}
where \(w_h\) contains the nodal values of \(W(\xi_2)\), repeated along
the \(\xi_1\)-direction.

Writing $\psi_h=q+\mathrm{i}p$ and
$x=[q^\top,p^\top]^\top$ gives
\begin{equation*}
\dot x(t;\mu)
=
J_{2N}H(\mu)x(t;\mu),
\qquad
H(\mu)
=
\begin{bmatrix}
A(\mu)&0\\
0&A(\mu)
\end{bmatrix},
\end{equation*}
with parameter-separable decomposition
\begin{equation*}
H(\mu)
=
H_1+\mu H_2,
\qquad
H_1
=
\frac12
\begin{bmatrix}
L&0\\
0&L
\end{bmatrix},
\qquad
H_2
=
\begin{bmatrix}
W_h&0\\
0&W_h
\end{bmatrix}.
\end{equation*}
For time discretization we use the implicit midpoint rule with $\Delta t=10^{-3}$ and set the parameter train and test domains as $\mathcal P_{\mathrm{train}} = \{1,1.2,1.4,1.6,1.8,2\},$ and $\mathcal P_{\mathrm{test}} = \{1.3,1.5,1.7\}.$
Spatial resolution and final time are specified separately for each study.

Since $L_{\xi_2}$ is positive definite and $W_h$ positive semidefinite,
$H(\mu)$ is positive definite for all $\mu\in \mathcal{P}$, satisfying the
assumptions of
\Cref{sec:sketched_ham_perturbation,sec:sketched_ham_trajectory_error}.

\subsubsection{Lossless LC resonator network}
\label{sec:numexp-model-lc}

Our second example is a lossless rectangular network of coupled LC
resonators. Electrical networks are classical benchmarks for port-Hamiltonian
MOR \cite{PolyugaVanderSchaft2010,GugercinEtAl2012,ChaturantabutEtAl2016};
here we use a closed, lossless two-dimensional network with parameter-dependent anisotropic coupling.

We denote the node fluxes and capacitor charges by
\(\phi,q\in\mathbb R^N\), respectively, where
\(N=N_{\xi_1}N_{\xi_2}\). In normalized units, the
capacitive and local inductive energies are
\(\frac12 q^\top q\) and \(\frac12\phi^\top\phi\). Additional inductive
couplings connect neighboring resonators and depend on the differences of
their node fluxes.

We first define
\[
B_{\xi_1}
=
I_{N_{\xi_2}}\otimes D_{N_{\xi_1}},
\quad
B_{\xi_2}
=
D_{N_{\xi_2}}\otimes I_{N_{\xi_1}} \quad
\textrm{with } 
D_m=
\begin{bmatrix}
-1 & 1 \\
   & -1 & 1 \\
   &    & \ddots & \ddots \\
   &    &        & -1 & 1
\end{bmatrix}
\in\mathbb R^{(m-1)\times m}
\]
such that \(B_{\xi_1}\phi\) and \(B_{\xi_2}\phi\) contain the flux differences
between neighboring resonators in the two coordinate directions. We choose
the corresponding normalized inverse inductances as $c_{\xi_1}(\mu)=\mu,
$ and $c_{\xi_2}(\mu)=0.05+0.6\mu,$ where $\mu\in\mathcal P:=[0.1,0.6].$
The total inductive energy is therefore
\[
\frac12\|\phi\|_2^2
+
\frac{\mu}{2}\|B_{\xi_1}\phi\|_2^2
+
\frac{0.05+0.6\mu}{2}\|B_{\xi_2}\phi\|_2^2
=
\frac12\phi^\top K(\mu)\phi,
\]
with 
\[
K(\mu)
:=
I_N+0.05L_{\xi_2}
+\mu\bigl(L_{\xi_1}+0.6L_{\xi_2}\bigr)
\]
where $L_{\xi_1}:=B_{\xi_1}^\top B_{\xi_1},$ and $L_{\xi_2}:=B_{\xi_2}^\top B_{\xi_2}.$ The Hamiltonian of the full coupled system is therefore
\[
\mathcal H(\phi,q;\mu)
=
\frac12\phi^\top K(\mu)\phi
+
\frac12q^\top q.
\]
Introducing \(x=[\phi^\top,q^\top]^\top\) gives
\[
\dot x(t;\mu)
=
J_{2N}H(\mu)x(t;\mu),
\qquad
H(\mu)
=
\begin{bmatrix}
K(\mu)&0\\
0&I_N
\end{bmatrix}.
\]
The Hamiltonian matrix is parameter-separable, i.e., $H(\mu)=H_0+\mu H_1,$
with
\[
H_0
=
\begin{bmatrix}
I_N+0.05L_{\xi_2}&0\\
0&I_N
\end{bmatrix},
\qquad
H_1
=
\begin{bmatrix}
L_{\xi_1}+0.6L_{\xi_2}&0\\
0&0
\end{bmatrix}.
\]
The initial condition consists of spatially separated Gaussian charge and flux
pulses centered at
\[
(i_q,j_q)=(0.42N_{\xi_1},0.57N_{\xi_2}),
\qquad
(i_\phi,j_\phi)=(0.68N_{\xi_1},0.31N_{\xi_2}),
\]
with
\[
q_{ij}^{(0)}
=
\exp\!\left(
-\frac{(i-i_q)^2+(j-j_q)^2}{2(3)^2}
\right),
\qquad
\phi_{ij}^{(0)}
=
0.5\exp\!\left(
-\frac{(i-i_\phi)^2+(j-j_\phi)^2}{2(2.5)^2}
\right).
\]
The combined state
\(x_0=[(\phi^{(0)})^\top,(q^{(0)})^\top]^\top\) is normalized such that
\(\|x_0\|_2=1\).

We use the implicit midpoint rule with \(\Delta t=0.01\) and set the parameter train and test domains as $\mathcal P_{\mathrm{train}}
=
\{0.1,0.2,0.3,0.4,0.5,0.6\} $ and $
\mathcal P_{\mathrm{test}}
=
\{0.25,0.35,0.45\}.$ The spatial resolution and final time are specified separately for each
numerical study.
\subsection{Ops-rcSVD basis construction}
\label{sec:numexp-ops}

The first study compares cSVD, cSVDs (computing a subset of dominant singular values and vectors), conventional two-pass
rcSVD, and Ops-rcSVD. The cSVD computes the full SVD of the complex snapshot matrix once and truncates it to each target dimension. The cSVDs instead computes only the prescribed dominant singular subspace and is run separately for each rank, providing a more competitive deterministic low-rank baseline. Problem sizes are chosen so that the complete snapshot matrices remain storable, allowing deterministic cSVD comparisons.

For an ortho-symplectic basis
$V\in\mathbb R^{2N\times2n}$, approximation quality is measured by
\begin{equation*}
e_{\mathrm{proj}}(V)
=
\frac{\|X_s-VV^\top X_s\|_F}{\|X_s\|_F}.
\end{equation*}
By \eqref{eq:real_complex_proj_identity}, this equals the complex projection
error analyzed in \Cref{sec:ops_rcsvd_error}. Results are plotted against the
real reduced dimension $2n$.

Reported runtimes include generation of the required training trajectories
and basis construction. The one-time setup of the parameter-dependent
midpoint propagators or factorizations is performed before the benchmark and
is excluded. For cSVD and cSVDs, the complete snapshot matrix is retained;
rcSVD and Ops-rcSVD process snapshots in streaming blocks. The second rcSVD
pass therefore regenerates all training trajectories, whereas Ops-rcSVD forms
both sketches during the first pass.

Both randomized methods use SRFT-based maps. We set $p=10, \ell=n+p,b=16,$ 
and for Ops-rcSVD we choose the practical heuristic $k=2\ell$, with $k\le N$.

For each target rank, five independent randomized realizations are used.
Projection errors are summarized by mean, and runtimes
by median since the median is less sensitive to outliers. Timing is single-threaded.

\subsubsection{Linear Schr\"odinger equation}
\label{sec:numexp-ops-schrodinger}

For this comparison, we set $N_{\xi_1}=N_{\xi_2}=600,$
which yields $N=360\,000$, $2N=720\,000$, and choose
$t\in[0,0.25]$. The six training trajectories yield approximately
$1.5\times10^3$ snapshots. The complex matrix
$Y_s\in\mathbb C^{N\times n_s}$
requires approximately $8.7\,\mathrm{GB}$ in complex double precision;
the equivalent real matrix requires the same storage.
Thus all four methods fit within the available memory.

Figure~\ref{fig:ops-schrodinger} shows nearly indistinguishable projection
errors for all methods. The error decreases from order $10^{-1}$ for the
smallest bases to approximately $10^{-2}$ at intermediate dimensions and
then drops rapidly towards machine precision. In particular, the nearly
coincident rcSVD and Ops-rcSVD curves indicate that the additional one-pass
coefficient-recovery error is negligible here.

Runtime differences are more pronounced. The full cSVD requires roughly
$4\text{--}5\times10^2\,\mathrm{s}$ whereas
the cSVDs is cheaper for small dimensions but becomes increasingly expensive,
eventually exceeding $10^3\,\mathrm{s}$. The rcSVD requires approximately
$3\times10^2\,\mathrm{s}$, while Ops-rcSVD is consistently fastest at about
$1.5\times10^2\,\mathrm{s}$ for smaller bases with only moderate growth.
Thus, Ops-rcSVD achieves essentially the same projection accuracy while
avoiding the second training-trajectory generation.

The memory comparison shows a substantial advantage of the randomized
methods. The additional peak memory is approximately $28\,\mathrm{GB}$
for cSVD and $17\text{--}18\,\mathrm{GB}$ for cSVDs, whereas both
randomized methods require less than $4\,\mathrm{GB}$ over the tested
dimensions. Their memory consumption increases with the basis dimension,
with Ops-rcSVD requiring slightly more memory than rcSVD due to the
additional left sketch retained during the single snapshot pass.

\begin{figure}[htbp]
    \centering

    \begin{minipage}[t]{0.48\textwidth}
        \centering
        \includegraphics[width=\linewidth]
        {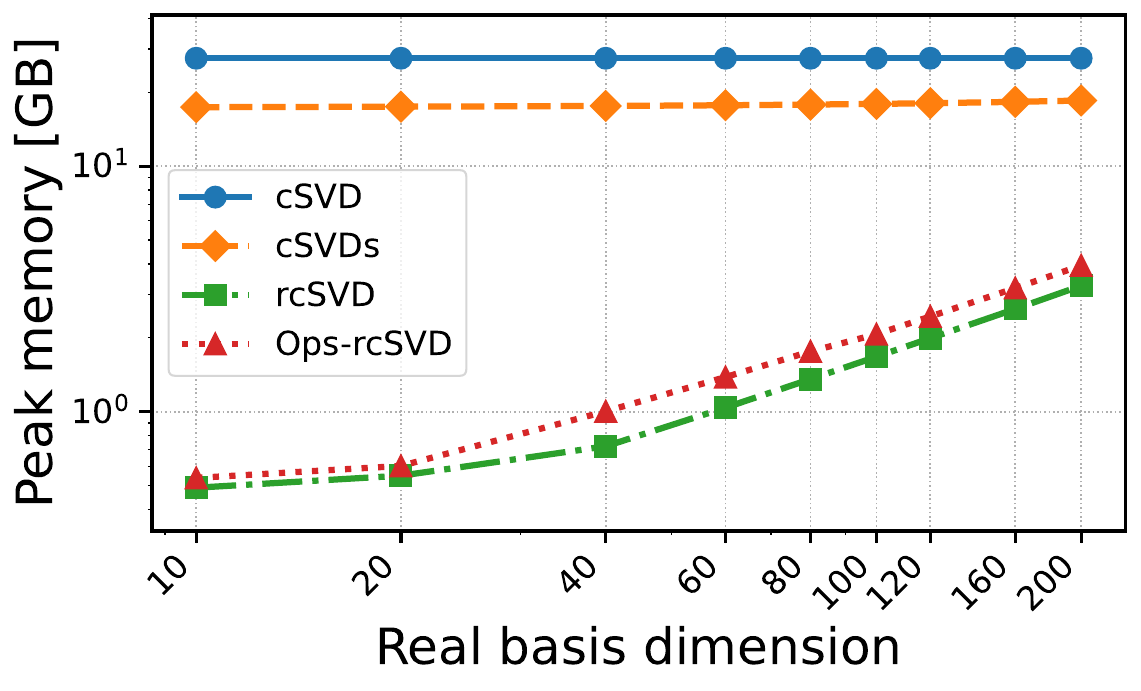}
        \par\smallskip
        \textnormal{(a) Peak memory consumption.}
    \end{minipage}
    \hfill
    \begin{minipage}[t]{0.48\textwidth}
        \centering
        \includegraphics[width=\linewidth]
        {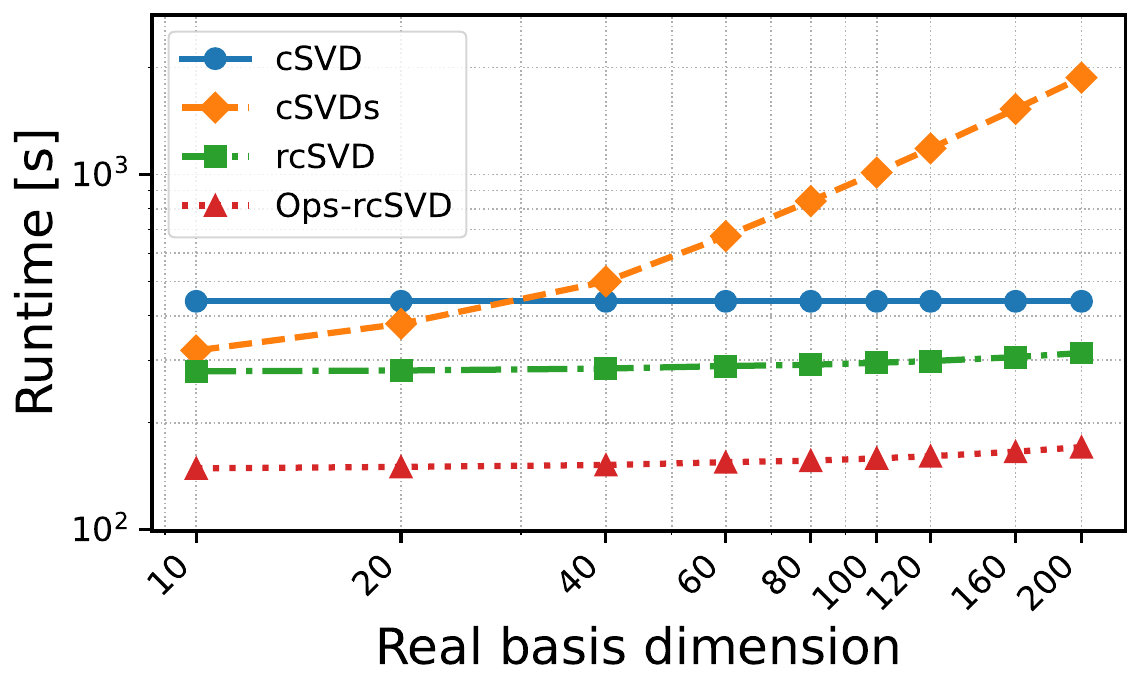}
        \par\smallskip
        \textnormal{(b) Snapshot generation and basis-construction time.}
    \end{minipage}

    \vspace{0.8em}

    \begin{minipage}[t]{0.48\textwidth}
        \vspace{0pt}
        \centering
        \includegraphics[width=\linewidth]
        {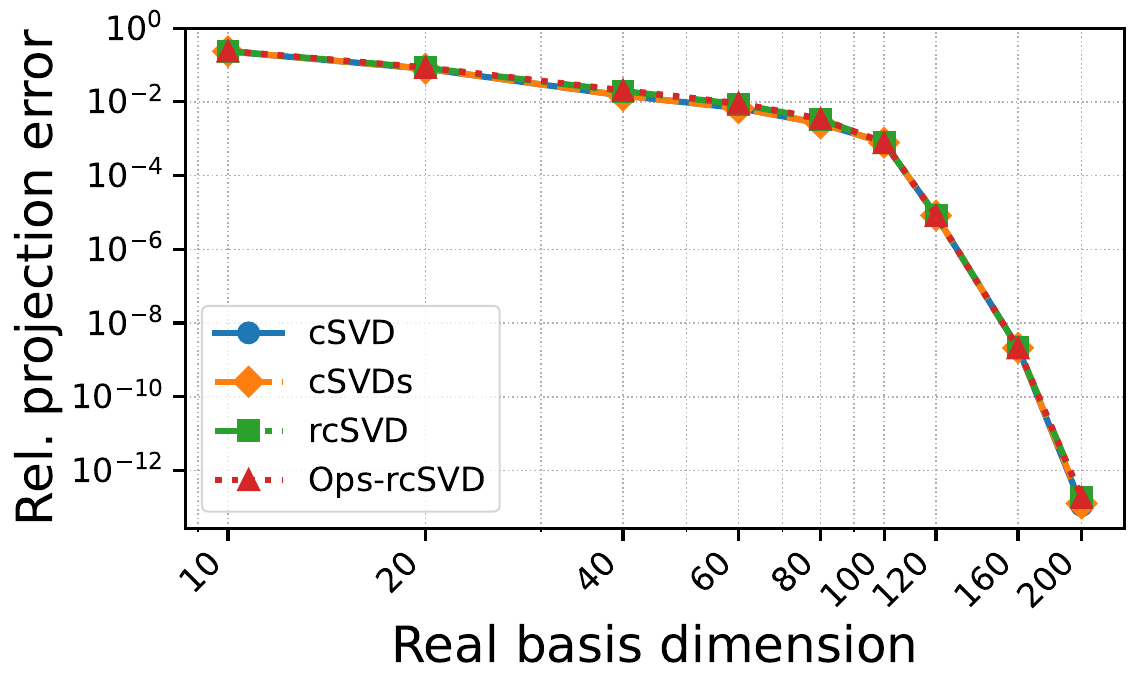}
        \par\smallskip
        \textnormal{(c) Relative projection error.}
    \end{minipage}
    \hfill
    \begin{minipage}[t]{0.48\textwidth}
        \vspace{0pt}
        \caption{Basis-construction results for the linear
        Schr\"odinger problem.
        (a) peak memory consumption,
        (b) snapshot generation and basis-construction time,
        and (c) relative projection error.
        Randomized errors show the mean;
        runtimes and memory consumption show the
        median over five realizations.}
        \label{fig:ops-schrodinger}
    \end{minipage}

\end{figure}

\subsubsection{Lossless LC resonator network}
\label{sec:numexp-ops-lc}

For the LC comparison, we set $N_{\xi_1}=N_{\xi_2}=100,$
which yields $N=10\,000$, $2N=20\,000$, and choose $t\in[0,120]$.
The six training trajectories produce approximately
$7.2\times10^4$ snapshots. The complex snapshot matrix $Y_s$
requires about $11.5\,\mathrm{GB}$, as does its real equivalent, so the
complete ensemble again fits in memory.

As shown in \Cref{fig:ops-lc}, all methods yield very similar projection
errors, decreasing from order $10^{-1}$ to near machine precision. In
particular, rcSVD and Ops-rcSVD remain essentially coincident, again
indicating that the one-pass coefficient recovery introduces no visible
loss of projection accuracy.

The runtime comparison shows the same qualitative behavior as for the
Schr\"odinger problem. The full cSVD has a large
cost, while cSVDs is cheaper at small ranks but grows strongly and eventually
approaches or exceeds the cSVD cost. Both randomized constructions remain
cheaper over most of the tested range, with Ops-rcSVD consistently fastest.
Since rcSVD and Ops-rcSVD have essentially identical approximation quality,
the runtime difference isolates the practical benefit of removing the second
trajectory pass.

The memory results also follow the behavior observed for the
Schr\"odinger problem. Both deterministic methods require substantially
more memory than the randomized methods, with nearly constant consumption
due to storage of the full snapshot ensemble. In contrast, the memory
consumption of rcSVD and Ops-rcSVD increases with the basis dimension but
remains considerably smaller. Ops-rcSVD again requires slightly more memory
than rcSVD, reflecting the additional storage needed for the left sketch.

\begin{figure}[htbp]
    \centering

    \begin{minipage}[t]{0.48\textwidth}
        \centering
        \includegraphics[width=\linewidth]
        {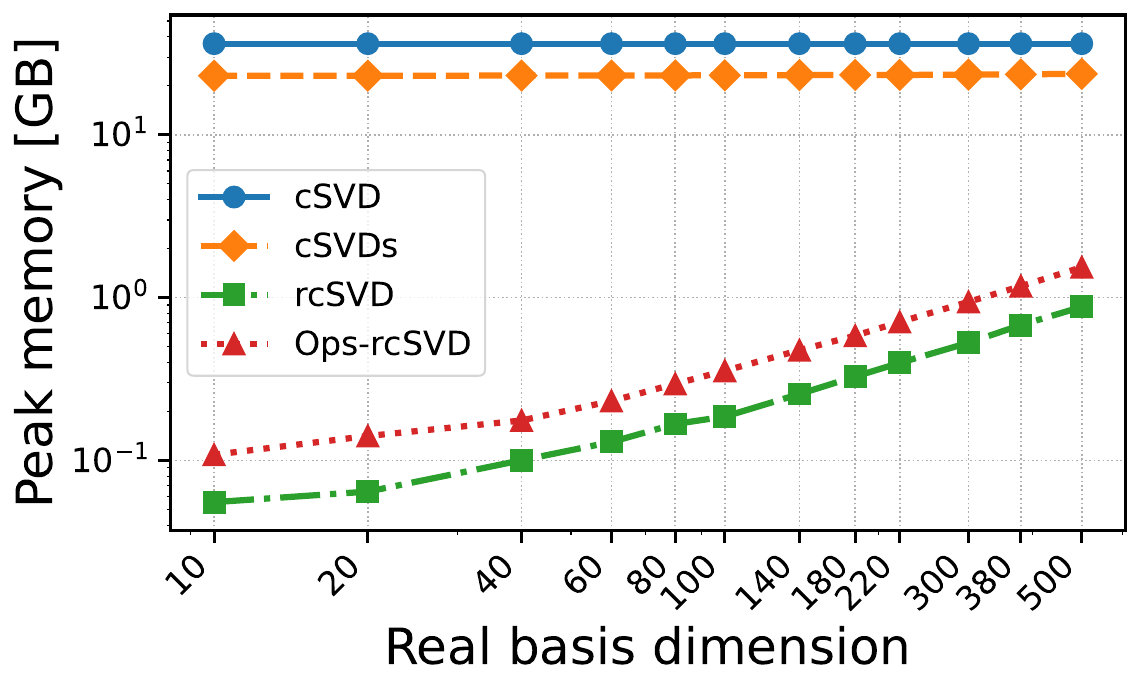}
        \par\smallskip
        \textnormal{(a) Peak memory consumption.}
    \end{minipage}
    \hfill
    \begin{minipage}[t]{0.48\textwidth}
        \centering
        \includegraphics[width=\linewidth]
        {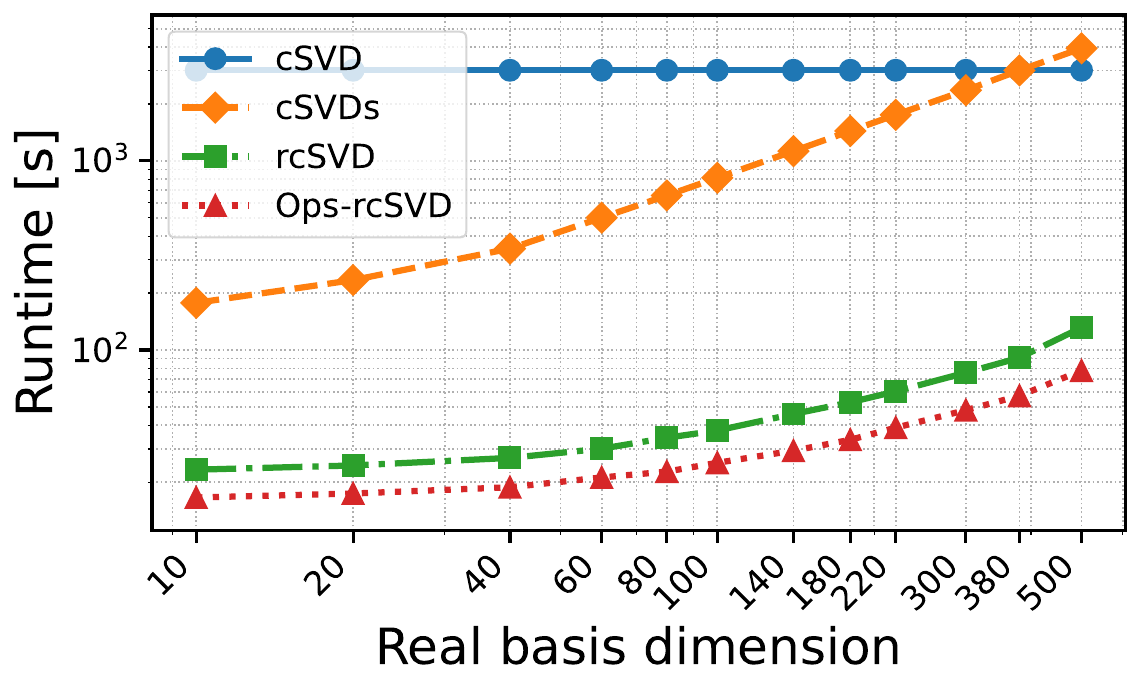}
        \par\smallskip
        \textnormal{(b) Snapshot generation and basis-construction time.}
    \end{minipage}

    \vspace{0.8em}

    \begin{minipage}[t]{0.48\textwidth}
    \vspace{0pt}
        \centering
        \includegraphics[width=\linewidth]
        {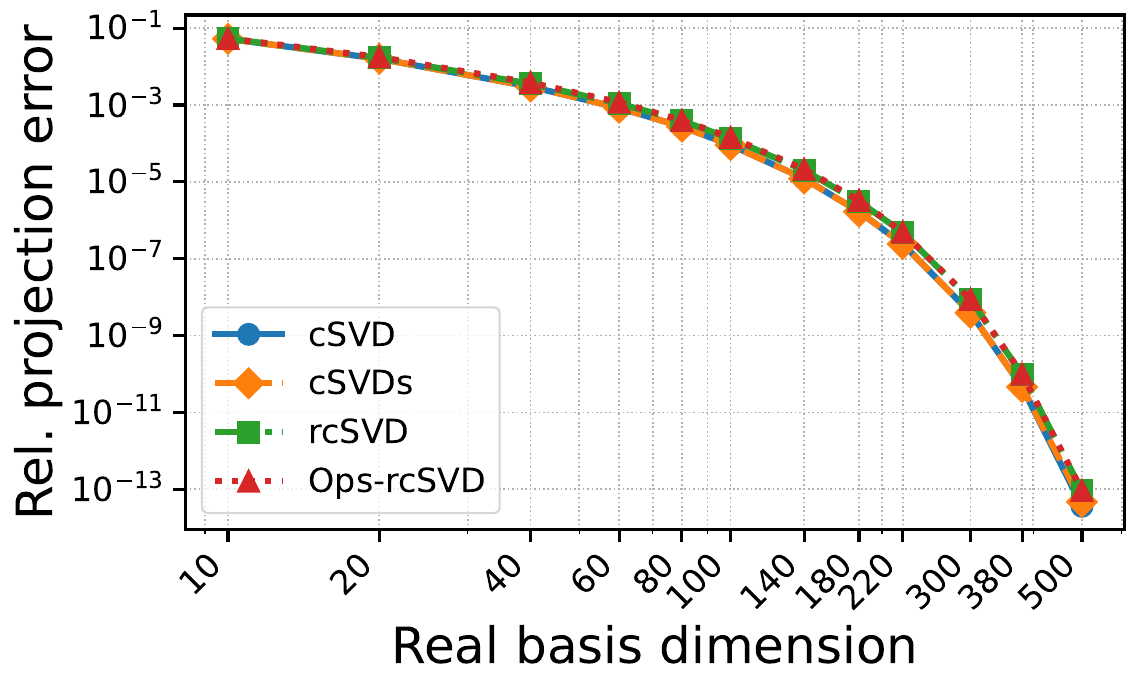}
        \par\smallskip
        \textnormal{(c) Relative projection error.}
    \end{minipage}
    \hfill
    \begin{minipage}[t]{0.48\textwidth}
    \vspace{0pt}

    \caption{Basis-construction results for the lossless
    LC resonator network.
    (a) peak memory consumption,
    (b) snapshot generation and basis-construction time,
    and (c) relative projection error.
    Randomized errors show the mean;
    runtimes and memory consumption show the
    median over repeated realizations.}
    \label{fig:ops-lc}
    \end{minipage}

\end{figure}

\subsection{Sketched reduced-order models}
\label{sec:numexp-sketched-rom}

We study sketched reduced-Hamiltonian assembly for a fixed
ortho-symplectic basis. The same basis is used for exact and sketched ROMs, so
their differences arise solely from reduced-Hamiltonian assembly. We report the FOM-relative state error
\begin{equation*}
e_{\mathrm{state}}
=
\frac{1}{
|\mathcal P_{\mathrm{test}}|(n_t+1)
}
\sum_{\mu\in\mathcal P_{\mathrm{test}}}
\sum_{j=0}^{n_t}
\frac{
\|x(t_j;\mu)-Vx_r(t_j;\mu)\|_2
}{
\|x(t_j;\mu)\|_2
}.
\end{equation*}
The exact-ROM value is the baseline obtained using the fixed basis and the
exactly assembled reduced Hamiltonian.

For each CountSketch realization, we also compute $\delta_{\max}
=
\max_{\mu\in\mathcal P_{\mathrm{test}}}
\delta(\mu),$ and the spectral condition number $\kappa_H
=
\kappa_2(\Theta_HV).$
For each sketch dimension, five independent CountSketch realizations are used;
accuracy and conditioning curves show the mean and standard deviation.

Assembly timing starts once $V$ is available and ends when all
reduced components have been formed. Exact timings include full-order
operator actions, projection, and symmetrization; sketched timings include CountSketch generation and application, formation of
the reduced matrices $G=(\Theta_HV)^\top(\Theta_HV)$ and $
M_q=(\Theta_HV)^\top(\Theta_HH_qV),$
least-squares recovery, and symmetrization. Each assembly is repeated $20$
times. For each sketch realization its median runtime is reported, because the mean is more sensitive to outliers.

\subsubsection{Linear Schr\"odinger equation}
\label{sec:numexp-sketched-schrodinger}

The large-scale Schr\"odinger experiment uses $
t\in[0,1]$ and $N_{\xi_1}=N_{\xi_2}=1000,$ which yields $N=10^6, 2N=2\times10^6.$ The six training trajectories contain approximately $6006$ snapshots, so
storing the complete complex snapshot matrix would require about
$96\,\mathrm{GB}$, exceeding the available memory.

A single Ops-rcSVD realization is therefore used to construct and fix a basis
of real dimension $2n=70.$ The same basis is used for all exact and sketched ROMs. Ops-rcSVD uses
the oversampling parameter $p = 70$ and block size $16$. For the independent Hamiltonian
CountSketch, $k_H/2n \in
\{2,4,8,16,32,64,128\},$ so $k_H\in\{140,280,560,1120,2240,4480,8960\}$. The results are presented in \Cref{fig:schrodinger_sketch_results}.

\begin{figure}[t]
    \centering

    \begin{subfigure}[t]{0.48\textwidth}
        \centering
        \includegraphics[width=\linewidth]
        {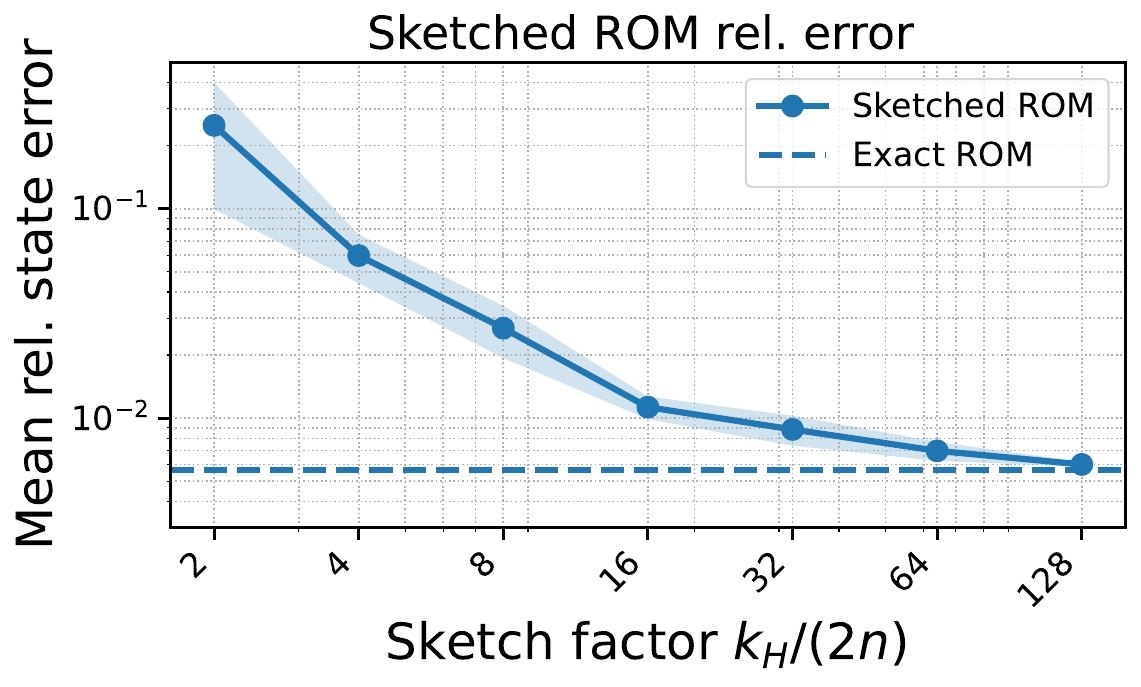}
        \caption{Relative state error.}
        \label{fig:schrodinger_sketch_results:state}
    \end{subfigure}
    \hfill
    \begin{subfigure}[t]{0.48\textwidth}
        \centering
        \includegraphics[width=\linewidth]
        {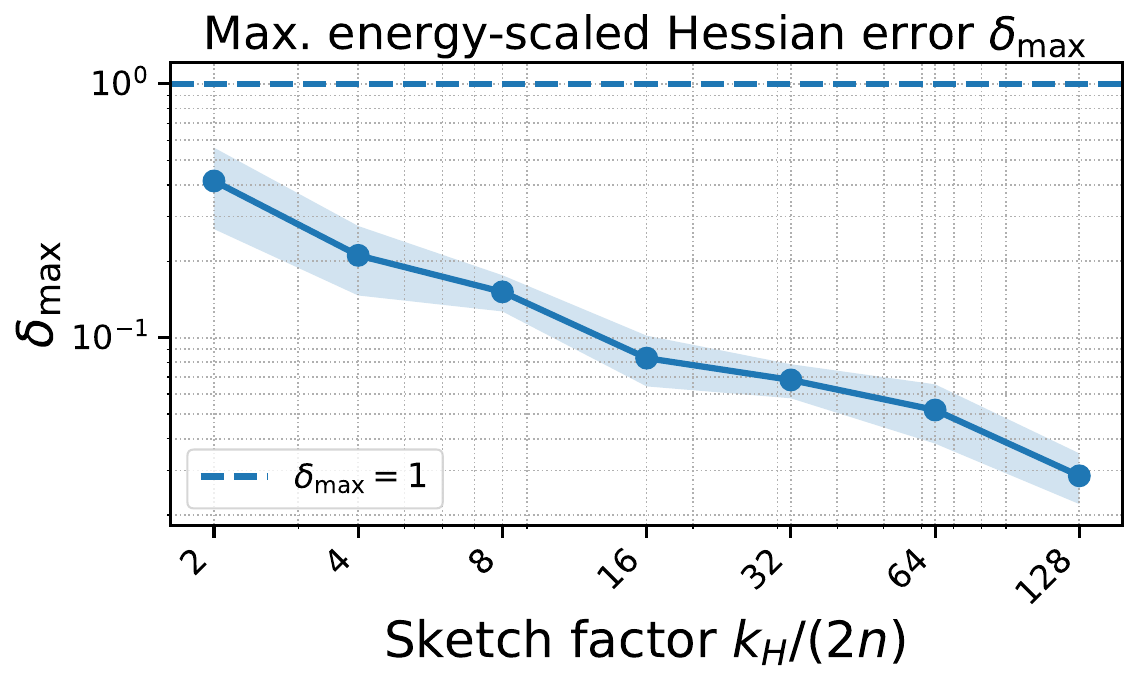}
        \caption{Maximum energy-scaled error $\delta_{\max}$.}
        \label{fig:schrodinger_sketch_results:delta}
    \end{subfigure}

    \vspace{0.5em}

    \begin{subfigure}[t]{0.48\textwidth}
        \centering
        \includegraphics[width=\linewidth]
        {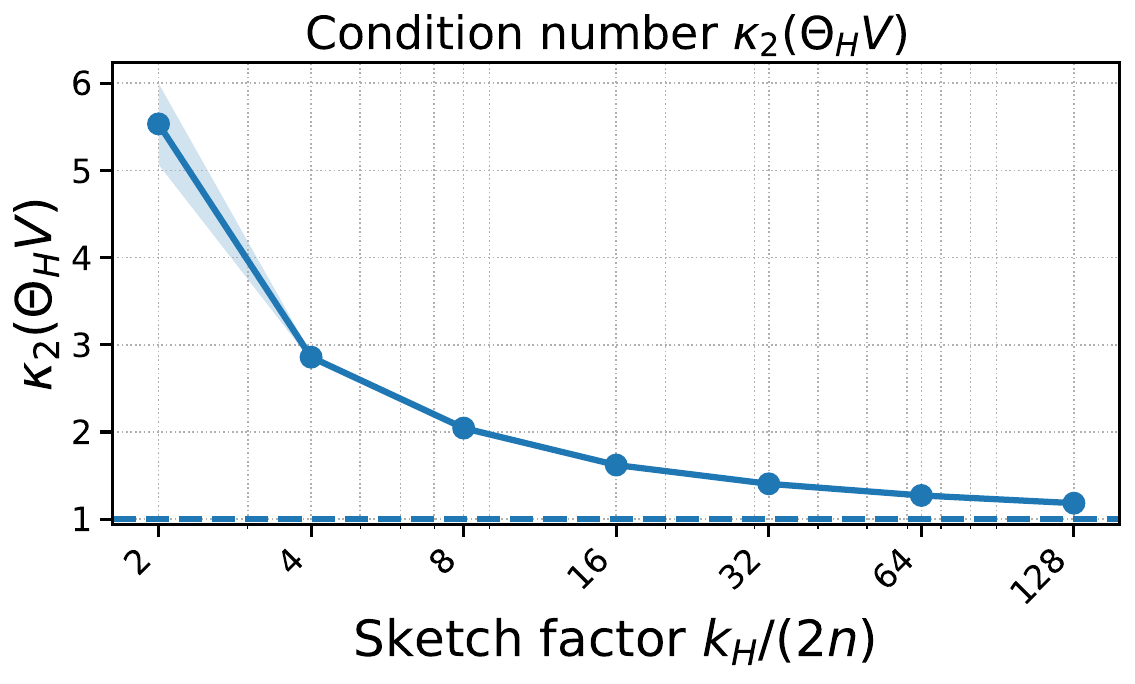}
        \caption{$\kappa_2(\Theta_H V)$.}
        \label{fig:schrodinger_sketch_results:condition}
    \end{subfigure}
    \hfill
    \begin{subfigure}[t]{0.48\textwidth}
        \centering
        \includegraphics[width=\linewidth]
        {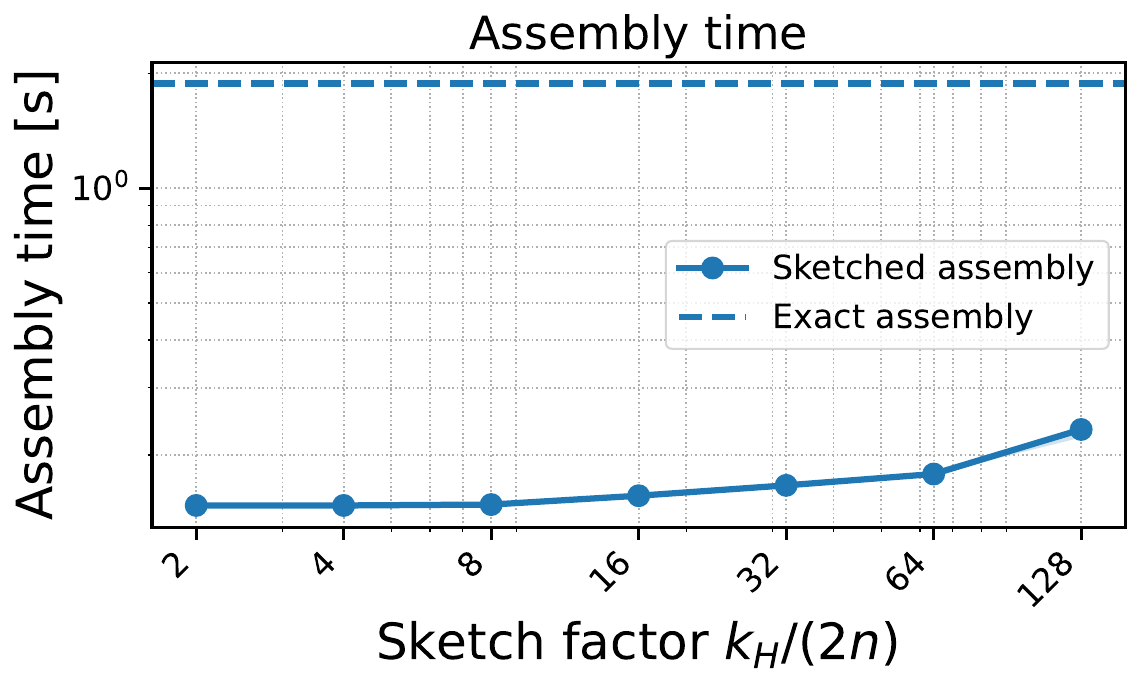}
        \caption{Reduced-Hamiltonian assembly time.}
        \label{fig:schrodinger_sketch_results:time}
    \end{subfigure}

    \caption{CountSketch results for the linear Schr\"odinger problem with
    $2n=70$.}
    \label{fig:schrodinger_sketch_results}
\end{figure}

As $k_H$ increases, the sketched-ROM state error approaches the exact-ROM
baseline. For the smallest sketch the error is of order $10^{-1}$, while
larger sketches reduce it steadily; at the largest dimension it is only
slightly above the exact-ROM level. The decreasing spread across realizations
also indicates reduced sensitivity to the particular CountSketch.

Simultaneously, $\delta_{\max}$ decreases from approximately
$4\times10^{-1}$ to about $3\times10^{-2}$, remaining below the
analysis threshold $\delta_{\max}=1$, while
$\kappa_2(\Theta_HV)$ decreases from roughly $5.5$ towards $1$.
As $\delta_{\max}$ and $\kappa_2(\Theta_HV)$ decrease, the state error
approaches the exact-ROM baseline. 

The sketched assembly remains substantially cheaper than exact projection.
Small and moderate sketches require below $0.2\,\mathrm{s}$, and the largest
only slightly above $0.2\,\mathrm{s}$, compared with approximately
$1.9\,\mathrm{s}$ for exact assembly. Thus moderate sketch dimensions
provide a favorable accuracy--cost tradeoff, while the largest sketches
nearly recover the exact-ROM baseline at substantially lower assembly cost.

\subsubsection{Lossless LC resonator network}
\label{sec:numexp-sketched-lc}

The large-scale LC experiment uses $t\in[0,120]$ and $N_{\xi_1}=N_{\xi_2}=1000,$ which yields $
N=10^6, 
2N=2\times10^6.$ The approximately $72006$ complex training snapshots would require about
$1.15\,\mathrm{TB}$ if stored explicitly. We fix one Ops-rcSVD basis of real dimension $2n=40,$ constructed with the oversampling parameter $p = 40$ and block size $16$. The independent
Hamiltonian CountSketch uses $k_H/2n \in
\{2,4,8,16,32,64,128,256,512,1024,2048,4096\},$ corresponding to $k_H=80,\ldots,163840$.

The reduced components exploit
$H_{r,0}
=
I+
V_\phi^\top G_{\mathrm{fixed}}V_\phi,$ and $
H_{r,1}
=
V_\phi^\top G_{\mathrm{coupling}}V_\phi.$
In the sketched implementation the identity contribution is included directly
in the sketch accumulation and is recovered exactly in exact arithmetic when
$\Theta_HV$ has full column rank.

\begin{figure}[t]
    \centering

    \begin{subfigure}[t]{0.48\textwidth}
        \centering
        \includegraphics[width=\linewidth]
        {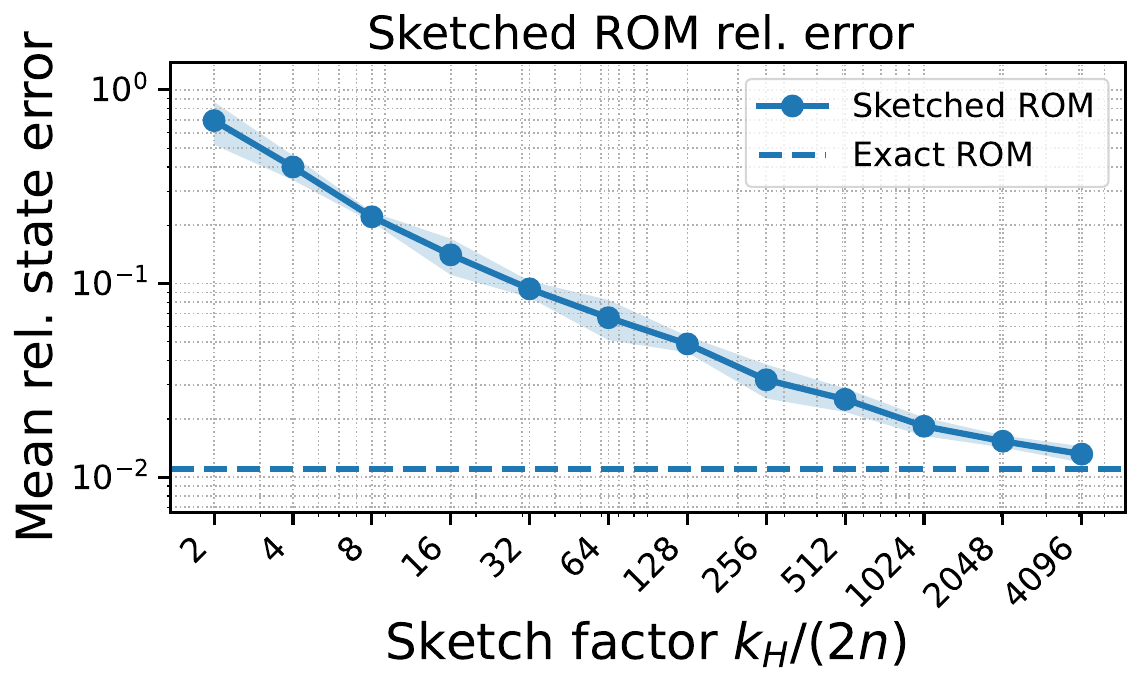}
        \caption{Relative state error.}
        \label{fig:lc-rom-error}
    \end{subfigure}
    \hfill
    \begin{subfigure}[t]{0.48\textwidth}
        \centering
        \includegraphics[width=\linewidth]
        {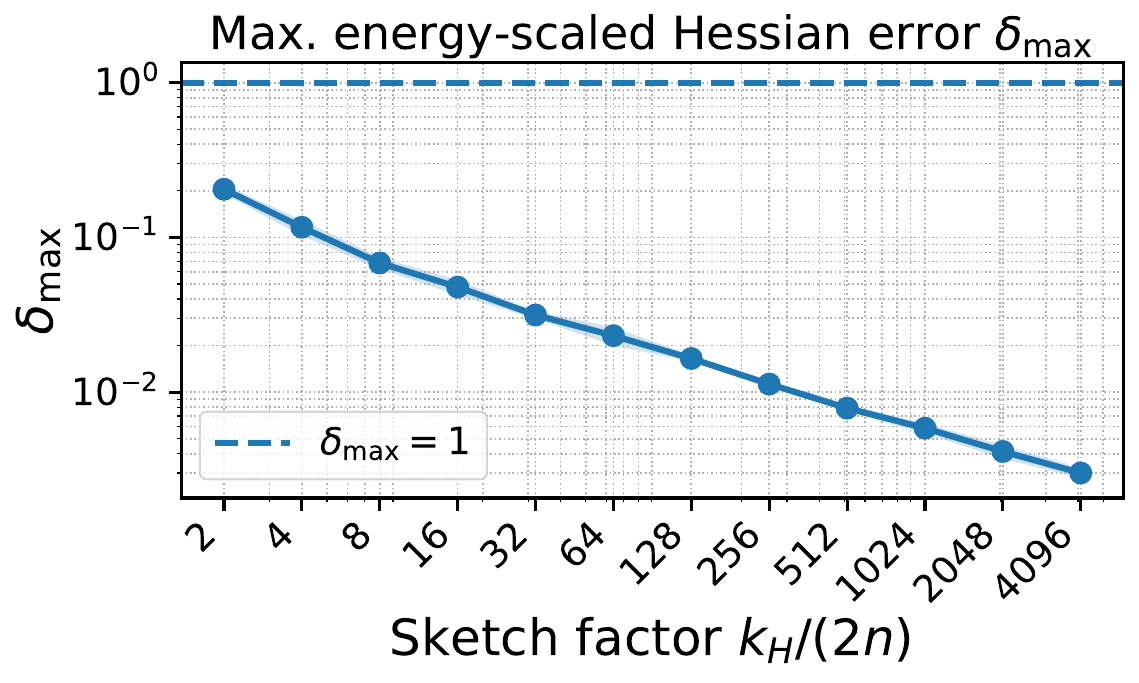}
        \caption{Maximum energy-scaled error $\delta_{\max}$.}
        \label{fig:lc-hessian-error}
    \end{subfigure}

    \vspace{0.4em}

    \begin{subfigure}[t]{0.48\textwidth}
        \centering
        \includegraphics[width=\linewidth]
        {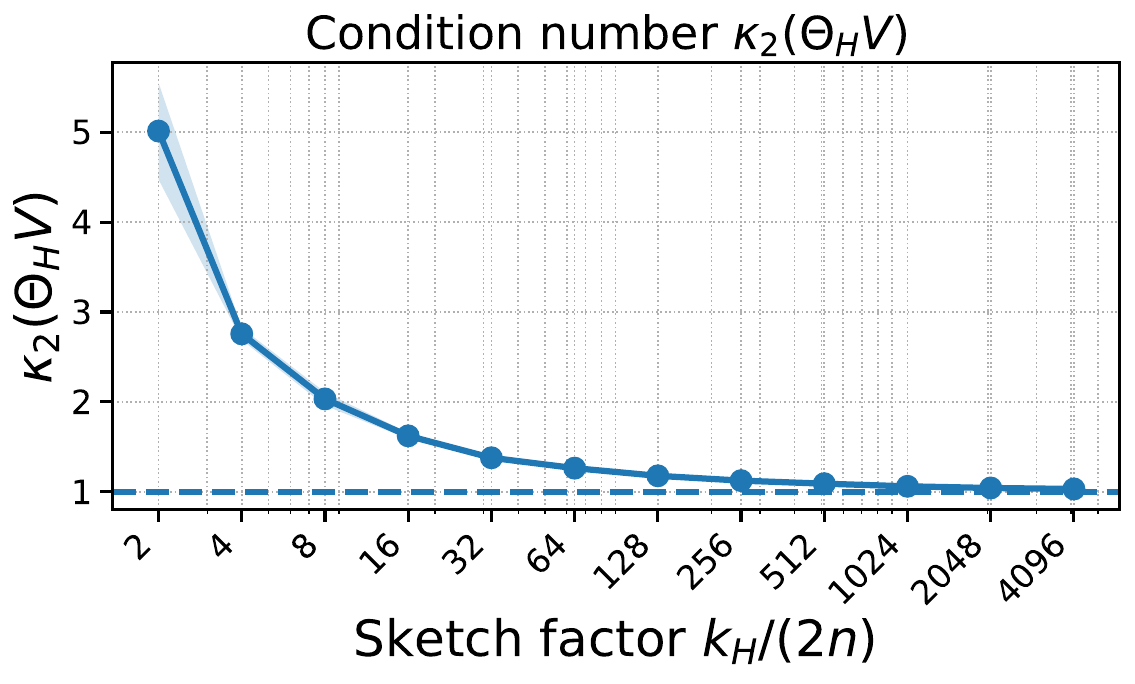}
        \caption{$\kappa_2(\Theta_H V)$.}
        \label{fig:lc-condition}
    \end{subfigure}
    \hfill
    \begin{subfigure}[t]{0.48\textwidth}
        \centering
        \includegraphics[width=\linewidth]
        {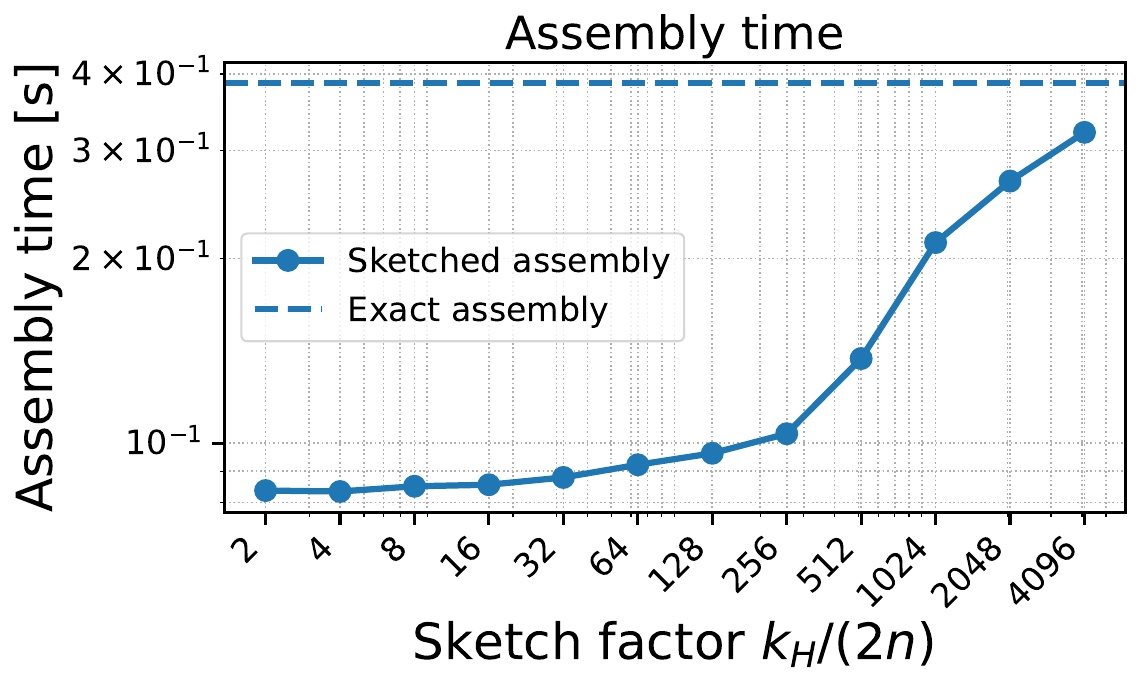}
        \caption{Reduced-Hamiltonian assembly time.}
        \label{fig:lc-assembly-time}
    \end{subfigure}

    \caption{CountSketch results for the lossless LC resonator network with
    $2n=40$.}
    \label{fig:lc-results}
\end{figure}

The LC experiment exhibits the same convergence trend but requires a much
wider range of sketch dimensions. As presented in \Cref{fig:lc-results} small sketches produce state errors of order
$10^{-1}$, well above the exact-ROM baseline slightly above $10^{-2}$.
The error decreases steadily with $k_H$, approaching the baseline only for
the largest sketches.

As in the Schr\"odinger experiment, increasing $k_H$ decreases both
$\delta_{\max}$ and $\kappa_2(\Theta_HV)$:
$\delta_{\max}$ decreases from slightly above $10^{-1}$ to order
$10^{-3}$, remaining below $1$, while
$\kappa_2(\Theta_HV)$ decreases from approximately $5$ towards $1$.
The simultaneous decrease of $\delta_{\max}$ and the state error is
consistent with the perturbation analysis and indicates that the assembly
error remains dynamically relevant over a larger range of sketch dimensions
than for the Schr\"odinger problem.

For small and moderate sketches, assembly remains below approximately
$0.1\,\mathrm{s}$. At large $k_H$, the costs of forming \(G\) and \(M_q\) grow with $k_H$ and the runtime exceeds $0.3\,\mathrm{s}$, compared with just under
$0.4\,\mathrm{s}$ for exact assembly. Thus CountSketch remains cheaper over
the tested range, but its advantage decreases as higher accuracy is demanded.
The LC experiment therefore illustrates the accuracy--cost tradeoff of the
one-sparse sketch particularly clearly.

\section{Conclusion}
\label{Sec:Conclusion}

We developed randomized techniques for reducing the offline cost of
structure-preserving model order reduction of Hamiltonian systems. The two
main targets were the construction of an ortho-symplectic reduced basis and
the assembly of the corresponding reduced Hamiltonian system.

For basis construction, we introduced Ops-rcSVD, a one-pass randomized
variant of the complex-SVD-based symplectic basis construction. It combines
randomized range finding with an independent left sketch and least-squares
coefficient recovery, processing the snapshot ensemble in a single streaming
pass and avoiding the second pass required by conventional rcSVD. The analysis separates the randomized
range-finding residual from the additional error introduced by the recovery of the projected coordinates.

For quadratic Hamiltonians, we introduced a sketched reduced-Hamiltonian
assembly based on sketching the reduced basis and Hamiltonian operator
actions, followed by least-squares reconstruction and explicit
symmetrization. For separable parameter dependence, all high-dimensional operator
and sketching operations are confined to the offline stage, and online
assembly uses only precomputed reduced components.

The analysis provides coefficient-recovery and snapshot projection bounds for
Ops-rcSVD, together with approximate-matrix-multiplication estimates for the
sketches considered. For the reduced-Hamiltonian assembly, subspace-embedding conditions on the
basis and operator-residual spaces yield uniform-in-parameter perturbation bounds and high-probability trajectory-error guarantees. The energy-scaled perturbation
$\delta(\mu)$ measures the relative reduced-Hamiltonian error;
$\delta(\mu)<1$ guarantees positive definiteness of the sketched reduced
Hessian whenever the exact one is positive definite. The Hamiltonian
structure further yields an energy-norm trajectory-error estimate with at
most linear growth in time, including a uniform-in-parameter version under
appropriate bounds.

Numerical experiments on a linear Schr\"odinger equation and a lossless LC
resonator network support these results. In the basis-construction study,
cSVD, cSVDs, rcSVD, and Ops-rcSVD produced reduced spaces of essentially the
same approximation quality over the tested dimensions. Ops-rcSVD consistently
gave the lowest runtime, while the close agreement with rcSVD indicates that
the one-pass least-squares recovery introduced no visible deterioration of
the reduced space. Its advantage stems from eliminating the second generation
and processing of the training trajectories.

A second study used $1000\times1000$ discretizations, corresponding to
$N=10^6$ spatial degrees of freedom and state dimension
$2N=2\cdot10^6$. Storing the complete complex training snapshot matrices
would require approximately $96\,\mathrm{GB}$ for the Schr\"odinger
problem and $1.15\,\mathrm{TB}$ for the LC network, placing these
experiments beyond the regime in which conventional full-snapshot cSVD is
practical.

For reduced-Hamiltonian assembly, an independent CountSketch $\Theta_H$
was applied to the basis and operator components. Increasing the sketch
dimension improves the conditioning of $\Theta_HV$ and decreases $\delta_{\max}
=
\max_{\mu\in\mathcal P_{\mathrm{test}}}\delta(\mu).$ At the same time, the sketched-ROM state error approaches the exact-ROM
baseline, consistent with the reduced-Hamiltonian perturbation and trajectory
bounds. The Schr\"odinger problem reaches the fixed-basis accuracy for
comparatively moderate sketches, whereas the LC network requires
substantially larger sketch dimensions. The LC experiment makes this accuracy--cost tradeoff particularly visible: moderate sketches provide
substantial assembly savings, while very large sketches improve accuracy but
reduce the computational advantage over exact projection.
Together, the two randomized stages reduce repeated snapshot access and
high-dimensional reduced-operator assembly while preserving Hamiltonian
structure.
Future work includes extending the sketched reduced-Hamiltonian construction to nonlinear systems and systems with nonseparable parameter dependence through structure-preserving
hyper-reduction, developing sharper problem-dependent sketch-dimension
estimates for sparse and structured embeddings, and comparing sketching
families with different application-cost and embedding-dimension tradeoffs.

\section*{Acknowledgments}
This work is supported by Deutsche Forschungsgemeinschaft (DFG, German Research Foundation) Project No. 566234291, and under under Germany´s Excellence Strategy – EXC 2075 – 390740016. We acknowledge the support by the Stuttgart Center for Simulation Science (SimTech).

\section*{Conflict of Interest}
The authors declare that they have no conflict of interest.

\section*{AI declaration}
The authors acknowledge the use of ChatGPT (OpenAI, GPT-5.6 Sol) to assist with code generation, debugging, language editing, rephrasing, manuscript organization, and consistency checks of the numerical implementation and manuscript. All AI-generated code and suggestions were critically reviewed, revised where necessary, and verified by the authors against the mathematical formulation and numerical results. The authors take full responsibility for the final content, implementation, scientific arguments, and conclusions of the work, which were not altered by the use of AI assistance.

\section*{Appendix}

\appendix

\section{Approximate matrix multiplication for the SRFT}
\label{app:srft_amm}

This appendix establishes the SRFT approximate-matrix-multiplication estimate
used in the analysis of the one-pass coefficient recovery.

\begin{proposition}[SRFT approximate matrix multiplication for the
Ops-rcSVD residual]
\label{prop:srft_amm}
Let $Q\in\mathbb C^{N\times\ell}$ satisfy $Q^*Q=I_\ell$, and let
$R_s\in\mathbb C^{N\times n_s}$ satisfy $Q^*R_s=0.$ Consider the SRFT matrix 
\[
\Theta
=
\sqrt{\frac{N}{k}}\,SFD
\in\mathbb C^{k\times N},
\]
where $F\in\mathbb C^{N\times N}$ is the unitary matrix of the discrete Fourier
transform, $D=\operatorname{diag}(d_1,\ldots,d_N)$ has independent
entries uniformly distributed on the complex unit circle, and
$S\in\mathbb R^{k\times N}$ selects $k$ coordinates uniformly without
replacement, independently of $D$.

Let $0<\delta<1$, define $\gamma_{\mathrm{SRFT}} := \frac{2(u+1)}{\sqrt{k}} \left( \sqrt{\ell}+L_\delta \right), L_\delta:=\sqrt{8\ln(N/\delta)},$
and let $0\le u\le\frac{\sqrt{k}}{1+L_\delta}.$
Then, with probability at least $1-e^{-u^2/4}-2\delta,$
we have
\begin{equation}
\label{eq:srft_amm_special}
\|Q^*(\Theta^*\Theta-I_N)R_s\|_F
\le
\gamma_{\mathrm{SRFT}}\|R_s\|_F.
\end{equation}
\end{proposition}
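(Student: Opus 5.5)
The plan is to transfer the problem to the rotated coordinates $U:=FDQ\in\mathbb C^{N\times\ell}$ and $W:=FDR_s\in\mathbb C^{N\times n_s}$. Since $F$ and $D$ are unitary,
\[
Q^*(\Theta^*\Theta-I_N)R_s=\frac{N}{k}U^*S^*SW-U^*W,
\qquad U^*W=Q^*R_s=0 .
\]
Denote by $u_i^*$ and $w_i^*$ the $i$-th rows of $U$ and $W$. Then the quantity to bound is the sampled sum
\[
X=\frac{N}{k}\sum_{i\in\mathcal S}u_iw_i^*,
\]
where $\mathcal S$ is the set of $k$ indices chosen without replacement. Its expectation is $\frac{N}{k}\cdot\frac{k}{N}\sum_{i=1}^N u_iw_i^*=U^*W=0$. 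The proof therefore reduces to a concentration bound for a mean-zero sum of sampled rank-one matrices in the Frobenius norm, viewed as a Hilbert space.

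The first step is the standard SRFT ``flattening'' estimate, conditional on $D$. For a fixed unit vector, $e_i^*FDz$ is a Rademacher/Steinhaus-type sum with coefficients of modulus $|F_{ij}z_j|=|z_j|/\sqrt N$. A Lipschitz-concentration argument for $z\mapsto\|e_i^*FDQ\|$ as a function of the phases in $D$ (Tropp's argument for the SRHT, applied to complex phases), followed by a union bound over the $N$ rows, gives with probability at least $1-\delta$
\[
\max_i\|u_i\|_2\le\frac{\sqrt\ell+L_\delta}{\sqrt N}.
\]
The same argument, applied to the rows of $W$ with the Frobenius norm of $R_s$ playing the role of $\sqrt\ell$, gives with a second probability $1-\delta$ a bound $\max_i\|w_i\|_2\lesssim (1+L_\delta)\|R_s\|_F/\sqrt N$, up to the constant absorbed in $L_\delta$. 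These two events account for the $2\delta$ in the failure probability.

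Next, conditioning on $D$ and on both flattening events, I will apply a vector Bernstein inequality for sampling without replacement, such as the version in Gross's or Tropp's work, which transfers from sampling with replacement. The variance proxy is
\[
\sigma^2=k\,\mathbb E\Bigl\|\tfrac{N}{k}u_iw_i^*\Bigr\|_F^2=\frac{N}{k}\sum_i\|u_i\|^2\|w_i\|^2\le\frac{N}{k}\max_i\|u_i\|^2\,\|R_s\|_F^2\le\frac{(\sqrt\ell+L_\delta)^2}{k}\|R_s\|_F^2 .
\]
This yields $\mathbb E\|X\|_F\le\sigma$ and a sub-Gaussian tail $\mathbb P(\|X\|_F\ge(1+u)\sigma)\le e^{-u^2/4}$. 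The tail is valid in the range $u\le\sigma/B$, where $B\le\frac{N}{k}\max_i\|u_i\|\max_i\|w_i\|$ is the uniform bound on each summand. Inserting the two flattening bounds gives $\sigma/B\gtrsim\sqrt k/(1+L_\delta)$, which is exactly the admissible range for $u$ in the statement. The factor $2$ in $\gamma_{\mathrm{SRFT}}$ absorbs the centering constant of the Bernstein inequality, or alternatively the transfer from sampling without replacement to sampling with replacement.

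I expect the main obstacle to be the flattening bound on the rows of $W$ and its compatibility with the restriction $u\le\sqrt k/(1+L_\delta)$. The rows of $Q$ have a clean $\sqrt\ell$ scaling, but $R_s$ has $n_s$ columns and no orthonormality. I will first try the scalar bound $|e_i^*FDr_j|\le L_\delta\|r_j\|/\sqrt N$ for each column, summed in squares. If the union bound over $Nn_s$ entries is too costly, I will instead apply Lipschitz concentration to the map $d\mapsto\|e_i^*F\operatorname{diag}(d)R_s\|_2$, which has Lipschitz constant $\|R_s\|_2/\sqrt N\le\|R_s\|_F/\sqrt N$. This route requires only $N$ events and gives the $(1+L_\delta)$ factor directly.
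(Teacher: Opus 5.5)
Your plan is correct and is essentially the paper's proof. You rotate by $FD$ so that the quantity becomes $\frac{N}{k}U^*S^*SW$ with $U^*W=0$. You flatten the rows of $FDQ$ and $FDR_s$ to $(\sqrt\ell+L_\delta)/\sqrt N$ and $(\|R_s\|_F+L_\delta\|R_s\|_2)/\sqrt N$, at a cost of $2\delta$. You then apply the vector Bernstein inequality for sampling without replacement, with your variance proxy $\max_i\|u_i\|^2\|R_s\|_F^2$ and a uniform bound taken from both flattening estimates; this gives the admissible range $u\le\sqrt k/(1+L_\delta)$. The factor $2$ in $\gamma_{\mathrm{SRFT}}$ comes from the constants $4N/k$ and $2N/k$ in that inequality's hypotheses, which the paper simply adopts.

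The one point to pin down is the flattening step for complex phases. The statement fixes $L_\delta=\sqrt{8\ln(N/\delta)}$, so you need exactly the tail $e^{-t^2/8}$, not a bound ``up to a constant absorbed in $L_\delta$''. The paper gets this constant by writing $d_j=\varepsilon_j\zeta_j$, with Rademacher $\varepsilon_j$ and $\zeta_j$ uniform on a semicircle. It then conditions on $\zeta$ and applies the Rademacher convex-Lipschitz inequality to precisely your fallback map, with Lipschitz constant $\|R_s\|_2/\sqrt N$ (resp.\ $1/\sqrt N$ for $Q$).
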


The proof follows the two ingredients used in
\cite[Lemma~4.11]{BoutsidisGittens2013}: randomized norm equalization and
uniform-sampling matrix multiplication.

\begin{lemma}[SRFT equalization of column norms]
\label{lem:srft_equalization}
Let $A\in\mathbb C^{m\times N}$, let
$F\in\mathbb C^{N\times N}$ be the unitary discrete Fourier transform,
and let $D=\operatorname{diag}(d_1,\ldots,d_N),$ where the $d_j$ are independent and uniformly distributed on the complex
unit circle. Then, for every $t\ge0$,
\begin{equation}
\label{eq:srft_equalization}
\mathbb P\left[
\max_{j=1,\ldots,N}
\|(AD^*F^*)_{(:,j)}\|_2
\le
\frac{1}{\sqrt N}
\left(
\|A\|_F+t\|A\|_2
\right)
\right]
\ge
1-Ne^{-t^2/8}.
\end{equation}
\end{lemma}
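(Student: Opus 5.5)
The plan is to fix the column index $j$, bound the tail of that single column norm, and then take a union bound over the $N$ columns. The $j$-th column of $AD^*F^*$ is
\[
c_j:=AD^*F^*e_j=\sum_{i=1}^N \bar d_i\,\overline{F_{j i}}\,a_i ,
\]
where $a_i$ denotes the $i$-th column of $A$. Equivalently, $c_j=A\,\mathrm{diag}(\overline{F_{j\cdot}})\,\bar d$ with $\bar d=(\bar d_1,\ldots,\bar d_N)^\top$. Since $F$ is the unitary DFT, $|F_{ji}|=N^{-1/2}$ for all $i,j$. Hence $c_j=N^{-1/2}A_j\xi$, where $A_j:=A\,\mathrm{diag}(\sqrt N\,\overline{F_{j\cdot}})$ satisfies $\|A_j\|_F=\|A\|_F$ and $\|A_j\|_2=\|A\|_2$, because the diagonal factor is unitary. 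Here $\xi$ has independent entries uniform on the unit circle.

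The key step is therefore a concentration inequality for $g(\xi):=\|A_j\xi\|_2$ with $\xi$ uniform on the torus $\mathbb T^N$. I would establish two facts.
\begin{enumerate}
\item \emph{Mean.} By independence and $\mathbb E[\xi_i\bar\xi_{i'}]=\delta_{ii'}$, we get $\mathbb E\, g\le(\mathbb E\, g^2)^{1/2}=\|A_j\|_F$.
\item \emph{Lipschitz constant.} The map $\xi\mapsto g(\xi)$ is convex on $\mathbb C^N$ and $\|A\|_2$-Lipschitz with respect to the Euclidean norm.
\end{enumerate}
The entries $\xi_i$ are independent and bounded by $1$, so each real and imaginary coordinate lies in $[-1,1]$. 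Talagrand's concentration inequality for convex Lipschitz functions of independent bounded variables (as used in \cite{BoutsidisGittens2013}, Lemma~4.11, or Tropp's SRHT analysis) then gives
\[
\mathbb P\bigl[g\ge \mathbb E\,g+t\|A\|_2\bigr]\le e^{-t^2/8}.
\]
The constant $8$ absorbs the diameter: each complex coordinate, viewed as two real coordinates in $[-1,1]$, has diameter $2$. Combining this with the mean bound yields
\[
\mathbb P\bigl[\|c_j\|_2>N^{-1/2}(\|A\|_F+t\|A\|_2)\bigr]\le e^{-t^2/8}.
\]

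Finally, a union bound over $j=1,\ldots,N$ gives a failure probability of at most $Ne^{-t^2/8}$, which is \eqref{eq:srft_equalization}.

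The main obstacle is getting the concentration constant right for complex phases rather than real Rademacher signs. The cleanest route is to identify $\mathbb C^N$ with $\mathbb R^{2N}$. The function stays convex and $\|A\|_2$-Lipschitz, since the real embedding of $A_j$ has the same spectral norm, and the variables lie in a product of bounded sets. Talagrand's inequality in the form $\mathbb P[f\ge \mathbb E f+\sigma\tau]\le e^{-\tau^2/8}$, for $\sigma$-Lipschitz convex $f$ of independent variables supported in $[-1,1]$, is the form I would cite. If the independence structure is a concern (real and imaginary parts of one $\xi_i$ are dependent), I would instead apply the inequality to the independent $\mathbb T$-valued coordinates directly, using the version for product spaces whose factors have diameter at most $2$. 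That version also gives $e^{-\tau^2/8}$.
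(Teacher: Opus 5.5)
Your proposal is correct but takes a different route from the paper in the concentration step; everything else is the same. The reduction of the $j$-th column to $N^{-1/2}A_j\xi$ with a unitary diagonal factor, the Jensen bound on the mean via $\mathbb E[\xi\xi^*]=I$, the Lipschitz constant $\|A\|_2$, and the final union bound all match the paper.

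\emph{How the paper handles complex phases.} It never needs concentration for circle-valued variables. It factors $d_i=\varepsilon_i\zeta_i$, with independent Rademacher signs $\varepsilon_i$ and phases $\zeta_i$ uniform on a fixed semicircle, and conditions on $\zeta$. It then applies the standard Rademacher convex-Lipschitz inequality $\mathbb P[f(\varepsilon)\ge\mathbb E f(\varepsilon)+Lt]\le e^{-t^2/8}$ to $\varepsilon\mapsto\|AZ^*\operatorname{diag}(f_j)\varepsilon\|_2$. Since the mean and Lipschitz bounds do not depend on $\zeta$, the tail bound survives averaging over $\zeta$. This gives an off-the-shelf citation with exactly the constant $8$.

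\emph{Where your argument needs work.} Your primary route, identifying $\mathbb C^N$ with $\mathbb R^{2N}$ and using the $[-1,1]$-coordinate inequality, does not apply as stated. As you note, the real and imaginary parts of each $\xi_i$ are dependent. Your proof therefore rests on the product-space version with $\mathbb T$-valued factors, which you assert but do not justify, and which is not one of the usual textbook forms. It does hold with your constant, for instance by the entropy method. Set $Z=g(\xi)$ and $Z_i:=\min_{x\in\mathbb T}g(\xi_1,\ldots,x,\ldots,\xi_N)$. Convexity with a subgradient $s$ of $g$ at $\xi$ gives $0\le Z-Z_i\le 2\|s_i\|_2$, hence $\sum_i(Z-Z_i)^2\le 4\|s\|_2^2\le 4\|A\|_2^2$. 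The modified logarithmic Sobolev inequality with Herbst's argument then yields $\mathbb P[Z\ge\mathbb E Z+t\|A\|_2]\le e^{-t^2/8}$.

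\emph{What each approach buys.} Once you write out that step, or replace it with the paper's sign--semicircle factorization, your proof is complete. Your version is more direct and does not rely on the sign-symmetry of the uniform phase distribution. The paper's trick is shorter because it reuses the Rademacher result verbatim.
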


\begin{proof}
We write each random phase as $d_j=\varepsilon_j\zeta_j,$ where $\varepsilon_j\in\{-1,+1\}$ are independent Rademacher variables
and $\zeta_j$ are independent and uniformly distributed on a fixed
semicircle, independently of the $\varepsilon_j$. Then $d_j$ is
uniformly distributed on the complex unit circle. Next, we set $D=EZ, E=\operatorname{diag}(\varepsilon_1,\ldots,\varepsilon_N), Z=\operatorname{diag}(\zeta_1,\ldots,\zeta_N),$ and fix an arbitrary realization of $Z$.

We set $f_j:=F^*e_j$. The $j$-th column of $AD^*F^*$ can be written as
\[
AD^*F^*e_j
=
AZ^*Ef_j
=
AZ^*\operatorname{diag}(f_j)\varepsilon,
\]
where
$\varepsilon=(\varepsilon_1,\ldots,\varepsilon_N)^\top$.
Hence
\[
g_j(\varepsilon)
:=
\|AZ^*\operatorname{diag}(f_j)\varepsilon\|_2
\]
is a convex function of the Rademacher vector $\varepsilon$. Since every
entry of $f_j$ has modulus $N^{-1/2}$,
\[
|g_j(x)-g_j(y)|
\le
\frac{\|A\|_2}{\sqrt N}\|x-y\|_2.
\]
The convex-Lipschitz concentration inequality for Rademacher vectors used in
the proof of \cite[Lemma~4.6]{BoutsidisGittens2013} therefore yields
\[
\mathbb P_\varepsilon\left[
g_j(\varepsilon)
\ge
\mathbb E_\varepsilon[g_j(\varepsilon)]
+
\frac{t}{\sqrt N}\|A\|_2
\right]
\le
e^{-t^2/8}.
\]
Furthermore,
\begin{equation*}
\mathbb E_\varepsilon[g_j(\varepsilon)^2]
=
\operatorname{tr}\!\left(
AZ^*\operatorname{diag}(f_j)
\mathbb E[\varepsilon\varepsilon^\top]
\operatorname{diag}(f_j)^*
ZA^*
\right)
=
\frac{1}{N}\operatorname{tr}(AA^*)
=
\frac{1}{N}\|A\|_F^2,
\end{equation*}
and Jensen's inequality gives
\[
\mathbb E_\varepsilon[g_j(\varepsilon)]
\le
\frac{\|A\|_F}{\sqrt N}.
\]
The resulting estimate does not depend on the chosen realization of $Z$
and therefore remains valid after averaging over $Z$. Finally, a union
bound over $j=1,\ldots,N$ gives
\[
\mathbb P\left[
\max_j
\|(AD^*F^*)_{(:,j)}\|_2
>
\frac{\|A\|_F+t\|A\|_2}{\sqrt N}
\right]
\le
Ne^{-t^2/8},
\]
which proves \eqref{eq:srft_equalization}.
\end{proof}

Lemma~\ref{lem:srft_equalization} is the complex Fourier counterpart of the
SRHT equalization result \cite[Lemma~4.6]{BoutsidisGittens2013}.
Lemma~\ref{lem:complex_uniform_amm} is the complex extension of the uniform
sampling result \cite[Lemma~4.13]{BoutsidisGittens2013}.
\begin{lemma}[Complex matrix multiplication by uniform sampling]
\label{lem:complex_uniform_amm}
Let $X\in\mathbb C^{m\times N},$ and $Y\in\mathbb C^{N\times p}$
and let $k\le N$. We select $k$ indices uniformly without replacement and
form $\widehat X\in\mathbb C^{m\times k},$ and $\widehat Y\in\mathbb C^{k\times p}$ from the corresponding columns of $X$ and rows of $Y$, respectively,
with both sampled factors scaled by $\sqrt{N/k}$. Let
\[
\sigma^2
\ge
\frac{4N}{k}
\sum_{i=1}^N
\|X_{(:,i)}\|_2^2
\|Y_{(i,:)}\|_2^2
\]
and
\[
M
\ge
\frac{2N}{k}
\max_{i=1,\ldots,N}
\|X_{(:,i)}\|_2
\|Y_{(i,:)}\|_2.
\]
Then, for every $0\le\tau\le{\sigma^2}/{M},$ we have
\begin{equation}
\label{eq:complex_uniform_amm}
\mathbb P\left[
\|\widehat X\widehat Y-XY\|_F
\ge
\sigma+\tau
\right]
\le
\exp\!\left(
-\frac{\tau^2}{4\sigma^2}
\right).
\end{equation}
\end{lemma}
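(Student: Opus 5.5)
We need to prove Lemma~\ref{lem:complex_uniform_amm}, the complex uniform-sampling AMM bound. The plan is to write the estimator as a sum of independent, or without-replacement, zero-mean random matrices. We then embed $\mathbb C^{m\times p}$ isometrically into a real Hilbert space and apply a vector Bernstein-type inequality, exactly as in the real case \cite[Lemma~4.13]{BoutsidisGittens2013}.

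\textbf{Step 1: the decomposition.} Let $i_1,\ldots,i_k$ be the sampled indices. Then
\[
\widehat X\widehat Y=\frac{N}{k}\sum_{s=1}^k X_{(:,i_s)}Y_{(i_s,:)},
\]
and each summand has expectation $\frac1k XY$, so $\widehat X\widehat Y-XY=\sum_{s=1}^k T_s$ with $T_s:=\frac{N}{k}X_{(:,i_s)}Y_{(i_s,:)}-\frac1k XY$. Identify $\mathbb C^{m\times p}$ with $\mathbb R^{2mp}$ through the real inner product $\operatorname{Re}\operatorname{tr}(A^*B)$. This is an isometry for the Frobenius norm, so the complex case reduces verbatim to a vector-valued real concentration problem.

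\textbf{Step 2: moment and range bounds.} Because $\|XY\|_F\le\sum_i\|X_{(:,i)}\|\|Y_{(i,:)}\|\le N\max_i\|X_{(:,i)}\|\|Y_{(i,:)}\|$, we get $\|T_s\|_F\le\frac{2N}{k}\max_i\|X_{(:,i)}\|\|Y_{(i,:)}\|\le M$. For the variance, we have
\[
\sum_s\mathbb E\|T_s\|_F^2\le k\cdot\frac{N^2}{k^2}\cdot\frac1N\sum_i\|X_{(:,i)}\|^2\|Y_{(i,:)}\|^2\le\sigma^2/4.
\]
By Jensen, this gives $\mathbb E\|\sum_sT_s\|_F\le\sigma/2\le\sigma$ in the with-replacement model.

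\textbf{Step 3: concentration.} Sampling without replacement is handled by the standard observation, used in \cite{BoutsidisGittens2013} via Gross--Nesme and Hoeffding, that expectations of convex functions of without-replacement sums are dominated by their with-replacement counterparts. This is the step I expect to be the main obstacle: the domination has to be applied to the moment generating function of the Frobenius norm of a Hilbert-space-valued sum, not only to the mean. I would follow exactly the argument of the real lemma, which applies the vector Bernstein inequality of Gross/Recht
\[
\mathbb P\bigl[\|\textstyle\sum_s T_s\|\ge\sqrt{V}+\tau\bigr]\le\exp(-\tau^2/(4V))
\]
for $\tau\le V/\max\|T_s\|$. That inequality is stated for real Hilbert spaces and is therefore valid after the isometric identification of Step 1.

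\textbf{Step 4: conclusion.} Insert $V\le\sigma^2$ and $\max\|T_s\|\le M$, noting that the constraint $\tau\le\sigma^2/M$ is exactly what that inequality requires. This yields \eqref{eq:complex_uniform_amm}.
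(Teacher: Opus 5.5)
Your proposal is correct and follows essentially the same route as the paper: identify $\mathbb C^{m\times p}$ isometrically with $\mathbb R^{2mp}$ under the Frobenius norm, then invoke the real without-replacement vector Bernstein bound \cite[Lemma~4.12]{BoutsidisGittens2013} with the stated $\sigma^2$ and $M$. Your Step~2 just makes explicit the range and variance parameters, which the paper's two-line proof takes over unchanged. As in the paper, the mgf-level without-replacement concentration you flag as the main obstacle is imported from the real result, not reproved.
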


\begin{proof}
We identify
$\mathbb C^{m\times p}$ with $\mathbb R^{2mp}$ by separating real and
imaginary parts. Under this identification, the Frobenius norm is precisely
the Euclidean norm of the corresponding real vector.

The sampled rank-one products
\[
\frac{N}{k}
X_{(:,i)}Y_{(i,:)}
\]
can therefore be regarded as vectors in the real normed space
$\mathbb R^{2mp}$. Applying the vector Bernstein estimate for sampling
without replacement used in
\cite[Lemma~4.12]{BoutsidisGittens2013} gives
\eqref{eq:complex_uniform_amm} with the same bounds on $\sigma^2$ and
$M$.
\end{proof}

We now combine these two lemmas in exactly the two steps needed for
Proposition~\ref{prop:srft_amm}.

\begin{proof}[Proof of Proposition~\ref{prop:srft_amm}]
We set $A:=R_s^*, B:=Q,$ and define $X:=AD^*F^*,Y:=FDB.$ Since $F$ and $D$ are unitary,
\[
XY=AB=R_s^*Q=0.
\]
Moreover,
\[
A\Theta^*\Theta B
=
\frac{N}{k}\,XS^*SY.
\]
\paragraph{Step 1: Application of Lemma~\ref{lem:srft_equalization}.}
Applying Lemma~\ref{lem:srft_equalization} to $A$ with
$t=L_\delta$ gives
\[
\max_i\|X_{(:,i)}\|_2
\le
\frac{\|A\|_F+L_\delta\|A\|_2}{\sqrt N}
\]
with probability at least $1-\delta$. Applying the same lemma to $B^*$
gives
\[
\max_i\|Y_{(i,:)}\|_2
\le
\frac{\|B\|_F+L_\delta\|B\|_2}{\sqrt N}
\]
with probability at least $1-\delta$. Thus, by a union bound, both
estimates hold on an event $\mathcal E$ satisfying
\[
\mathbb P(\mathcal E)\ge1-2\delta.
\]
\paragraph{Step 2: Application of Lemma~\ref{lem:complex_uniform_amm}.}
We fix any realization of $D$ belonging to $\mathcal E$. Since the row
selection $S$ is independent of $D$,
Lemma~\ref{lem:complex_uniform_amm} applies to the resulting matrices
$X$ and $Y$.

We choose
\begin{equation}
\label{eq:srft_sigma}
\sigma^2
:=
\frac{4}{k}
\left(
\|B\|_F+L_\delta\|B\|_2
\right)^2
\|A\|_F^2.
\end{equation}
Using the bound on the row norms of $Y$ and
$\|X\|_F=\|A\|_F$, we obtain
\[
\frac{4N}{k}
\sum_{i=1}^N
\|X_{(:,i)}\|_2^2
\|Y_{(i,:)}\|_2^2
\le
\frac{4N}{k}
\max_i\|Y_{(i,:)}\|_2^2
\|X\|_F^2\le
\sigma^2.
\]
Next, we set
\begin{equation}
\label{eq:srft_M}
M
:=
\frac{2}{k}
\left(
\|A\|_F+L_\delta\|A\|_2
\right)
\left(
\|B\|_F+L_\delta\|B\|_2
\right).
\end{equation}
The two bounds defining $\mathcal E$ give
\[
M
\ge
\frac{2N}{k}
\max_i
\|X_{(:,i)}\|_2
\|Y_{(i,:)}\|_2.
\]
From \eqref{eq:srft_sigma}--\eqref{eq:srft_M},
\[
\frac{\sigma^2}{M}
=
\frac{
2\|A\|_F^2
\left(
\|B\|_F+L_\delta\|B\|_2
\right)
}{
\|A\|_F+L_\delta\|A\|_2
}.
\]
Since $\|A\|_2\le\|A\|_F$,
\[
\frac{\sigma^2}{M}
\ge
\frac{
2\|A\|_F
\left(
\|B\|_F+L_\delta\|B\|_2
\right)
}{
1+L_\delta
}
=
\frac{\sqrt{k}}{1+L_\delta}\,\sigma.
\]
Hence
\[
0\le u\le\frac{\sqrt{k}}{1+L_\delta}
\]
implies
\[
u\sigma\le\frac{\sigma^2}{M}.
\]
Using $\tau=u\sigma$ in
Lemma~\ref{lem:complex_uniform_amm} therefore gives, for every realization
of $D$ in $\mathcal E$,
\[
\mathbb P_S\left[
\|A\Theta^*\Theta B-AB\|_F
\ge
(u+1)\sigma
\right]
\le
e^{-u^2/4}.
\]
Together with $\mathbb P(\mathcal E)\ge1-2\delta$, this yields
\[
\mathbb P\left[
\|A\Theta^*\Theta B-AB\|_F
\le
\frac{2(u+1)}{\sqrt{k}}
\|A\|_F
\left(
\|B\|_F+L_\delta\|B\|_2
\right)
\right]
\ge
1-e^{-u^2/4}-2\delta.
\]
Finally, for $A=R_s^*$ and $B=Q$,
\[
\|A\|_F=\|R_s\|_F,
\qquad
\|Q\|_F=\sqrt{\ell},
\qquad
\|Q\|_2=1,
\qquad
AB=R_s^*Q=0.
\]
Since
\[
\|R_s^*\Theta^*\Theta Q\|_F
=
\|Q^*\Theta^*\Theta R_s\|_F
=
\|Q^*(\Theta^*\Theta-I_N)R_s\|_F,
\]
we obtain
\[
\|Q^*(\Theta^*\Theta-I_N)R_s\|_F
\le
\frac{2(u+1)}{\sqrt{k}}
\left(
\sqrt{\ell}+L_\delta
\right)
\|R_s\|_F,
\]
which is \eqref{eq:srft_amm_special}.
\end{proof}

The resulting constant
$\gamma_{\mathrm{SRFT}}
=2(u+1)(\sqrt{\ell}+L_\delta)/\sqrt{k}$
can therefore be inserted directly into the coefficient-recovery bound of
Proposition~\ref{prop:coeff_error_B_BLS}.


\begin{thebibliography}{45}
\providecommand{\natexlab}[1]{#1}
\providecommand{\url}[1]{\texttt{#1}}
\expandafter\ifx\csname urlstyle\endcsname\relax
  \providecommand{\doi}[1]{doi: #1}\else
  \providecommand{\doi}{doi: \begingroup \urlstyle{rm}\Url}\fi

\bibitem[Meyer and Offin(2017)]{Meyer2017}
Kenneth~R. Meyer and Daniel~C. Offin.
\newblock \emph{Introduction to Hamiltonian Dynamical Systems and the N-Body
  Problem}, volume~90 of \emph{Applied Mathematical Sciences}.
\newblock Springer Cham, 3rd edition, 2017.
\newblock \doi{10.1007/978-3-319-53691-0}.

\bibitem[Benner et~al.(2015)Benner, Gugercin, and Willcox]{Benner2015}
Peter Benner, Serkan Gugercin, and Karen Willcox.
\newblock A survey of projection-based model reduction methods for parametric
  dynamical systems.
\newblock \emph{SIAM Review}, 57\penalty0 (4):\penalty0 483--531, 2015.
\newblock \doi{10.1137/130932715}.

\bibitem[Peng and Mohseni(2016)]{Peng2016}
Liqian Peng and Kamran Mohseni.
\newblock Symplectic model reduction of {H}amiltonian systems.
\newblock \emph{SIAM Journal on Scientific Computing}, 38\penalty0
  (1):\penalty0 A1--A27, 2016.
\newblock \doi{10.1137/140978922}.

\bibitem[{Maboudi Afkham} and Hesthaven(2017)]{MaboudiAfkham2017}
Babak {Maboudi Afkham} and Jan~S. Hesthaven.
\newblock Structure preserving model reduction of parametric {H}amiltonian
  systems.
\newblock \emph{SIAM Journal on Scientific Computing}, 39\penalty0
  (6):\penalty0 A2616--A2644, 2017.
\newblock \doi{10.1137/17M1111991}.

\bibitem[Buchfink et~al.(2019)Buchfink, Bhatt, and Haasdonk]{Buchfink2019}
Patrick Buchfink, Ashish Bhatt, and Bernard Haasdonk.
\newblock Symplectic model order reduction with non-orthonormal bases.
\newblock 24\penalty0 (2):\penalty0 43, 2019.
\newblock ISSN 2297-8747.
\newblock \doi{10.3390/mca24020043}.

\bibitem[Fresca and Manzoni(2022)]{Fresca2022}
Stefania Fresca and Andrea Manzoni.
\newblock {POD-DL-ROM}: Enhancing deep learning-based reduced order models for
  nonlinear parametrized {PDEs} by proper orthogonal decomposition.
\newblock \emph{Computer Methods in Applied Mechanics and Engineering},
  388:\penalty0 114181, 2022.
\newblock \doi{10.1016/j.cma.2021.114181}.

\bibitem[Rettberg et~al.(2025)Rettberg, Kneifl, Herb, Buchfink, Fehr, and
  Haasdonk]{Rettberg2025}
Johannes Rettberg, Jonas Kneifl, Julius Herb, Patrick Buchfink, J{\"o}rg Fehr,
  and Bernard Haasdonk.
\newblock Data-driven identification of latent port-{Hamiltonian} systems.
\newblock \emph{Computational Science and Engineering}, 2:\penalty0 4, 2025.
\newblock \doi{10.1007/s44207-025-00007-2}.

\bibitem[Brantner and Kraus(2023)]{brantner2023symplectic}
Benedikt Brantner and Michael Kraus.
\newblock Symplectic autoencoders for model reduction of hamiltonian systems.
\newblock \emph{arXiv preprint arXiv:2312.10004}, 2023.
\newblock \doi{https://doi.org/10.48550/arXiv.2312.10004}.

\bibitem[Xu(2003)]{Xu2003}
Hongguo Xu.
\newblock An {SVD}-like matrix decomposition and its applications.
\newblock \emph{Linear Algebra and its Applications}, 368:\penalty0 1--24,
  2003.
\newblock \doi{10.1016/S0024-3795(03)00370-7}.

\bibitem[Pagliantini(2021)]{Pagliantini2021}
Cecilia Pagliantini.
\newblock Dynamical reduced basis methods for {H}amiltonian systems.
\newblock \emph{Numerische Mathematik}, 148\penalty0 (2):\penalty0 409--448,
  2021.
\newblock ISSN 0945-3245.
\newblock \doi{10.1007/s00211-021-01211-w}.

\bibitem[Hesthaven et~al.(2022)Hesthaven, Pagliantini, and
  Ripamonti]{HesthavenPagliantiniRipamonti2022}
Jan~S. Hesthaven, Cecilia Pagliantini, and Nicol{\`o} Ripamonti.
\newblock Rank-adaptive structure-preserving model order reduction of
  {{H}amiltonian} systems.
\newblock \emph{ESAIM: Mathematical Modelling and Numerical Analysis},
  56\penalty0 (2):\penalty0 617--650, 2022.
\newblock \doi{10.1051/m2an/2022013}.

\bibitem[Schleu\ss{} et~al.(2023)Schleu\ss{}, Smetana, and ter
  Maat]{Schleuss23}
Julia Schleu\ss{}, Kathrin Smetana, and Lukas ter Maat.
\newblock Randomized quasi-optimal local approximation spaces in time.
\newblock \emph{SIAM Journal on Scientific Computing}, 45\penalty0
  (3):\penalty0 A1066--A1096, 2023.
\newblock \doi{10.1137/22M1481002}.

\bibitem[Herkert et~al.(2024{\natexlab{a}})Herkert, Buchfink, and
  Haasdonk]{Herkert2023dict}
Robin Herkert, Patrick Buchfink, and Bernard Haasdonk.
\newblock Dictionary-based online-adaptive structure-preserving model order
  reduction for parametric {H}amiltonian systems.
\newblock \emph{Advances in Computational Mathematics}, 50\penalty0
  (1):\penalty0 12, 2024{\natexlab{a}}.
\newblock \doi{10.1007/s10444-023-10102-7}.

\bibitem[Sharma et~al.(2023)Sharma, Mu, Buchfink, Geelen, Glas, and
  Kramer]{Sharma2023QM}
Harsh Sharma, Hongliang Mu, Patrick Buchfink, Rudy Geelen, Silke Glas, and
  Boris Kramer.
\newblock Symplectic model reduction of {H}amiltonian systems using data-driven
  quadratic manifolds.
\newblock \emph{Computer Methods in Applied Mechanics and Engineering},
  417:\penalty0 116402, 2023.
\newblock \doi{10.1016/j.cma.2023.116402}.

\bibitem[Buchfink et~al.(2023)Buchfink, Glas, and Haasdonk]{Buchfink2023SMG}
Patrick Buchfink, Silke Glas, and Bernard Haasdonk.
\newblock Symplectic model reduction of {H}amiltonian systems on nonlinear
  manifolds and approximation with weakly symplectic autoencoder.
\newblock \emph{SIAM Journal on Scientific Computing}, 45\penalty0
  (2):\penalty0 A289--A311, 2023.
\newblock \doi{10.1137/21M1466657}.

\bibitem[Mahoney(2010)]{Mahoney2010}
Michael~W. Mahoney.
\newblock Randomized algorithms for matrices and data.
\newblock \emph{Foundations and Trends{\textregistered} in Machine Learning},
  3\penalty0 (2):\penalty0 123--224, 2010.
\newblock \doi{10.1561/2200000035}.

\bibitem[Woodruff(2014)]{Woodruff2014}
David~P. Woodruff.
\newblock Sketching as a tool for numerical linear algebra.
\newblock \emph{Foundations and Trends{\textregistered} in Theoretical Computer
  Science}, 10\penalty0 (1--2):\penalty0 1--157, 2014.
\newblock \doi{10.1561/0400000060}.

\bibitem[Murray et~al.(2023)Murray, Demmel, Mahoney, Erichson, Melnichenko,
  Malik, Grigori, Luszczek, Derezi{\'n}ski, Lopes, Liang, Luo, and
  Dongarra]{Murray2023}
Riley Murray, James Demmel, Michael~W. Mahoney, N.~Benjamin Erichson, Maksim
  Melnichenko, Osman~Asif Malik, Laura Grigori, Piotr Luszczek, Micha{\l}
  Derezi{\'n}ski, Miles~E. Lopes, Tianyu Liang, Hengrui Luo, and Jack Dongarra.
\newblock Randomized numerical linear algebra: A perspective on the field with
  an eye to software.
\newblock Technical Report UCB/EECS-2023-19, EECS Department, University of
  California, Berkeley, February 2023.
\newblock URL
  \url{https://www2.eecs.berkeley.edu/Pubs/TechRpts/2023/EECS-2023-19.html}.

\bibitem[Halko et~al.(2011)Halko, Martinsson, and Tropp]{Halko2011}
Nathan Halko, Per-Gunnar Martinsson, and Joel~A. Tropp.
\newblock Finding structure with randomness: Probabilistic algorithms for
  constructing approximate matrix decompositions.
\newblock \emph{SIAM Review}, 53\penalty0 (2):\penalty0 217--288, 2011.
\newblock \doi{10.1137/090771806}.

\bibitem[Gu(2015)]{Gu2015}
Ming Gu.
\newblock Subspace iteration randomization and singular value problems.
\newblock \emph{SIAM Journal on Scientific Computing}, 37\penalty0
  (3):\penalty0 A1139--A1173, 2015.
\newblock \doi{10.1137/130938700}.

\bibitem[Tropp(2011)]{Tropp2011Ana}
Joel~A. Tropp.
\newblock Improved analysis of the subsampled randomized {H}adamard transform.
\newblock \emph{Advances in Adaptive Data Analysis}, 3\penalty0
  (1--2):\penalty0 115--126, 2011.
\newblock \doi{10.1142/S1793536911000787}.

\bibitem[Demmel et~al.(2023)Demmel, Grigori, and Rusciano]{Demmel2023}
James Demmel, Laura Grigori, and Alexander Rusciano.
\newblock An improved analysis and unified perspective on deterministic and
  randomized low-rank matrix approximation.
\newblock \emph{SIAM Journal on Matrix Analysis and Applications}, 44\penalty0
  (2):\penalty0 559--591, 2023.
\newblock \doi{10.1137/21M1391316}.

\bibitem[Alla and Kutz(2019)]{Alla2019}
Alessandro Alla and J.~Nathan Kutz.
\newblock Randomized model order reduction.
\newblock \emph{Advances in Computational Mathematics}, 45:\penalty0
  1251--1271, 2019.
\newblock \doi{10.1007/s10444-018-09655-9}.

\bibitem[Bach et~al.(2019)Bach, Ceglia, Song, and Duddeck]{Bach2019}
Christian Bach, Daniele Ceglia, Lailong Song, and Fabian Duddeck.
\newblock Randomized low-rank approximation methods for projection-based model
  order reduction of large nonlinear dynamical problems.
\newblock \emph{International Journal for Numerical Methods in Engineering},
  118\penalty0 (4):\penalty0 209--241, 2019.
\newblock \doi{10.1002/nme.6009}.

\bibitem[Buhr and Smetana(2018)]{Buhr2018}
Andreas Buhr and Kathrin Smetana.
\newblock Randomized local model order reduction.
\newblock \emph{SIAM Journal on Scientific Computing}, 40\penalty0
  (4):\penalty0 A2120--A2151, 2018.
\newblock \doi{10.1137/17M1138480}.

\bibitem[Pelling and Sarradj(2026)]{PellingSarradj2026}
Art J.~R. Pelling and Ennes Sarradj.
\newblock Adaptive reduced order modelling of acoustic {LTI} systems with
  input--output dead time.
\newblock \emph{Mechanical Systems and Signal Processing}, 242:\penalty0
  113613, 2026.
\newblock \doi{10.1016/j.ymssp.2025.113613}.

\bibitem[Mir{\'o} et~al.(2026)Mir{\'o}, Eiximeno, Gasparino, Kutz, Rodriguez,
  and Lehmkuhl]{MiroEtAl2026}
Arnau Mir{\'o}, Benet Eiximeno, Lucas Gasparino, Nathan Kutz, Ivette Rodriguez,
  and Oriol Lehmkuhl.
\newblock Toward a {GPU}-enabled billionaire {SVD} in {pyLOM}.
\newblock \emph{Acta Mechanica}, 2026.
\newblock \doi{10.1007/s00707-025-04621-1}.

\bibitem[Herkert et~al.(2024{\natexlab{b}})Herkert, Buchfink, Haasdonk,
  Rettberg, and Fehr]{Herkert2023rand}
Robin Herkert, Patrick Buchfink, Bernard Haasdonk, Johannes Rettberg, and
  J{\"o}rg Fehr.
\newblock Randomized symplectic model order reduction for {Hamiltonian}
  systems.
\newblock In Ivan Lirkov and Svetozar Margenov, editors, \emph{Large-Scale
  Scientific Computations}, volume 13952 of \emph{Lecture Notes in Computer
  Science}, pages 99--107. Springer Cham, 2024{\natexlab{b}}.
\newblock \doi{10.1007/978-3-031-56208-2_9}.

\bibitem[Herkert et~al.(2026)Herkert, Buchfink, Haasdonk, Rettberg, and
  Fehr]{herkert2025error}
Robin Herkert, Patrick Buchfink, Bernard Haasdonk, Johannes Rettberg, and
  J{\"o}rg Fehr.
\newblock Error analysis of randomized symplectic model order reduction for
  {H}amiltonian systems.
\newblock \emph{Linear Algebra and its Applications}, 729:\penalty0 67--99,
  2026.
\newblock ISSN 0024-3795.
\newblock \doi{10.1016/j.laa.2025.09.022}.

\bibitem[Balabanov and Nouy(2019)]{Balabanov2019}
Oleg Balabanov and Anthony Nouy.
\newblock Randomized linear algebra for model reduction. {P}art {I}: Galerkin
  methods and error estimation.
\newblock \emph{Advances in Computational Mathematics}, 45\penalty0
  (5):\penalty0 2969--3019, 2019.
\newblock ISSN 1572-9044.
\newblock \doi{10.1007/s10444-019-09725-6}.

\bibitem[Balabanov and Nouy(2021)]{Balabanov2021}
Oleg Balabanov and Anthony Nouy.
\newblock Randomized linear algebra for model reduction--part {II}: minimal
  residual methods and dictionary-based approximation.
\newblock \emph{Advances in Computational Mathematics}, 47\penalty0
  (2):\penalty0 26, 2021.
\newblock ISSN 1572-9044.
\newblock \doi{10.1007/s10444-020-09836-5}.

\bibitem[Balabanov(2019)]{Balabanov2019a}
Oleg Balabanov.
\newblock \emph{Randomized Linear Algebra for Model Order Reduction}.
\newblock PhD thesis, \'Ecole Centrale de Nantes and Universitat Polit\`ecnica
  de Catalunya, 2019.

\bibitem[Smetana et~al.(2019)Smetana, Zahm, and Patera]{Smetana19}
Kathrin Smetana, Olivier Zahm, and Anthony~T. Patera.
\newblock Randomized residual-based error estimators for parametrized
  equations.
\newblock \emph{SIAM Journal on Scientific Computing}, 41\penalty0
  (2):\penalty0 A900--A926, 2019.
\newblock \doi{10.1137/18M120364X}.

\bibitem[Benner et~al.(2021)Benner, Grivet-Talocia, Quarteroni, Rozza,
  Schilders, and Silveira]{Benner2021}
Peter Benner, Stefano Grivet-Talocia, Alfio Quarteroni, Gianluigi Rozza, Wil
  Schilders, and Lu{\'i}s~Miguel Silveira, editors.
\newblock \emph{Model Order Reduction, Volume 2: Snapshot-Based Methods and
  Algorithms}.
\newblock De Gruyter, 2021.
\newblock \doi{10.1515/9783110671490}.

\bibitem[Benner et~al.(2017)Benner, Cohen, Ohlberger, and Willcox]{Benner2017}
Peter Benner, Albert Cohen, Mario Ohlberger, and Karen Willcox, editors.
\newblock \emph{{M}odel {R}eduction and {A}pproximation: {T}heory and
  {A}lgorithms}, volume~15 of \emph{Computational Science \& Engineering}.
\newblock Society for Industrial and Applied Mathematics, Philadelphia, PA,
  2017.
\newblock \doi{10.1137/1.9781611974829}.

\bibitem[Hairer et~al.(2006)Hairer, Lubich, and Wanner]{Hairer2006GNI}
Ernst Hairer, Christian Lubich, and Gerhard Wanner.
\newblock \emph{Geometric Numerical Integration: Structure-Preserving
  Algorithms for Ordinary Differential Equations}, volume~31 of \emph{Springer
  Series in Computational Mathematics}.
\newblock Springer-Verlag, Berlin, 2nd edition, 2006.
\newblock \doi{10.1007/3-540-30666-8}.

\bibitem[Palitta et~al.(2025)Palitta, Schweitzer, and
  Simoncini]{PalittaSchweitzerSimoncini2025}
Davide Palitta, Marcel Schweitzer, and Valeria Simoncini.
\newblock Sketched and truncated polynomial {K}rylov methods: Evaluation of
  matrix functions.
\newblock \emph{Numerical Linear Algebra with Applications}, 32\penalty0
  (1):\penalty0 e2596, 2025.
\newblock \doi{10.1002/nla.2596}.

\bibitem[Brand(2006)]{Brand2006}
Matthew Brand.
\newblock Fast low-rank modifications of the thin singular value decomposition.
\newblock \emph{Linear Algebra and its Applications}, 415\penalty0
  (1):\penalty0 20--30, 2006.
\newblock \doi{10.1016/j.laa.2005.07.021}.

\bibitem[Fareed et~al.(2018)Fareed, Singler, Zhang, and Shen]{Fareed2018}
Hiba Fareed, John~R. Singler, Yangwen Zhang, and Jiguang Shen.
\newblock Incremental proper orthogonal decomposition for pde simulation data.
\newblock \emph{Computers \& Mathematics with Applications}, 75\penalty0
  (6):\penalty0 1942--1960, 2018.
\newblock \doi{10.1016/j.camwa.2017.09.012}.

\bibitem[Rettberg et~al.(2024)Rettberg, Wittwar, Buchfink, Herkert, Fehr, and
  Haasdonk]{RettbergEtAl2024}
Johannes Rettberg, Dominik Wittwar, Patrick Buchfink, Robin Herkert, J{\"o}rg
  Fehr, and Bernard Haasdonk.
\newblock Improved a posteriori error bounds for reduced port-{H}amiltonian
  systems.
\newblock \emph{Advances in Computational Mathematics}, 50:\penalty0 100, 2024.
\newblock \doi{10.1007/s10444-024-10195-8}.

\bibitem[Karas{\"o}zen and Uzunca(2018)]{KarasozenUzunca2018}
B{\"u}lent Karas{\"o}zen and Murat Uzunca.
\newblock Energy preserving model order reduction of the nonlinear
  {S}chr{\"o}dinger equation.
\newblock \emph{Advances in Computational Mathematics}, 44\penalty0
  (6):\penalty0 1769--1796, 2018.
\newblock \doi{10.1007/s10444-018-9593-9}.

\bibitem[Polyuga and van~der Schaft(2010)]{PolyugaVanderSchaft2010}
Rostyslav~V. Polyuga and Arjan~J. van~der Schaft.
\newblock Structure preserving model reduction of port-{H}amiltonian systems by
  moment matching at infinity.
\newblock \emph{Automatica}, 46\penalty0 (4):\penalty0 665--672, 2010.
\newblock \doi{10.1016/j.automatica.2010.01.018}.

\bibitem[Gugercin et~al.(2012)Gugercin, Polyuga, Beattie, and van~der
  Schaft]{GugercinEtAl2012}
Serkan Gugercin, Rostyslav~V. Polyuga, Christopher Beattie, and Arjan~J.
  van~der Schaft.
\newblock Structure-preserving tangential interpolation for model reduction of
  port-{H}amiltonian systems.
\newblock \emph{Automatica}, 48\penalty0 (9):\penalty0 1963--1974, 2012.
\newblock \doi{10.1016/j.automatica.2012.05.052}.

\bibitem[Chaturantabut et~al.(2016)Chaturantabut, Beattie, and
  Gugercin]{ChaturantabutEtAl2016}
Saifon Chaturantabut, Christopher Beattie, and Serkan Gugercin.
\newblock Structure-preserving model reduction for nonlinear port-{H}amiltonian
  systems.
\newblock \emph{SIAM Journal on Scientific Computing}, 38\penalty0
  (5):\penalty0 B837--B865, 2016.
\newblock \doi{10.1137/15M1055085}.

\bibitem[Boutsidis and Gittens(2013)]{BoutsidisGittens2013}
Christos Boutsidis and Alex Gittens.
\newblock Improved matrix algorithms via the subsampled randomized {H}adamard
  transform.
\newblock \emph{SIAM Journal on Matrix Analysis and Applications}, 34\penalty0
  (3):\penalty0 1301--1340, 2013.
\newblock \doi{10.1137/120874540}.

\end{thebibliography}
\end{document}